\documentclass[a4paper,12pt]{amsart}

\usepackage{hyperref}

\usepackage{microtype}
\emergencystretch=1.5em

\usepackage{amssymb,amsmath,amsfonts,amsthm,bm,dsfont,latexsym,amscd,verbatim,url,nicefrac,stmaryrd,enumerate,colonequals,dsfont,appendix,mathtools,stackrel}
\usepackage[all,cmtip,2cell]{xy}
\UseAllTwocells

\usepackage{times}
\usepackage{graphicx}
\usepackage{fullpage}

\usepackage{tikz}
\usetikzlibrary{matrix,arrows,decorations.pathmorphing}

\usepackage{verbatim}

\usepackage{mathabx}

\usepackage[english]{babel}
\usepackage[utf8]{inputenc}

\usepackage{mathrsfs}

\newtheorem*{thm*}{Theorem}
\newtheorem{thm}{Theorem}[section]
\newtheorem{cor}[thm]{Corollary}
\newtheorem{lemma}[thm]{Lemma}

\newtheorem{prop}[thm]{Proposition}

\newtheorem{defn}[thm]{Definition}

\theoremstyle{remark}

\theoremstyle{definition}

\newtheorem{rmk}[thm]{Remark}

\numberwithin{equation}{thm}

\def\beq{\begin{equation}}
\def\eeq{\end{equation}}

\def\beqn{\begin{equation*}}
\def\eeqn{\end{equation*}}

\def\ben{\begin{enumerate}}
\def\een{\end{enumerate}}


\def\crash#1{}
\def\A{{\mathbb A}}
\def\B{{\mathbb B}}
\def\C{{\mathbb C}}

\def\F{{\mathbb F}}

\def\G{{\mathbb G}}
\def\L{{\mathbb L}}

\def\N{{\mathbb N}}
\def\P{{\mathbb P}}
\def\Q{{\mathbb Q}}
\def\R{{\mathbb R}}
\def\T{{T}}

\def\Z{{\mathbb Z}}

\def\l{\left}
\def\r{\right}

\def\an{{\rm an}}

\def\cf{\emph{cf.}~}

\def\lc{\emph{loc.cit.}~}

\def\cO{{\mathcal O}}

\def\cU{{\mathcal U}}

\def\sS{{\mathscr O}}

\def\sS{{\mathscr S}}

\def\sX{{\mathscr X}}
\def\sY{{\mathscr Y}}
\def\sZ{{\mathscr Z}}

\def\bD{{\mathbf D}}

\def\bT{{\mathbf T}}

\def\fR{{\mathfrak R}}
\def\fX{{\mathfrak X}}

\def\La{\Lambda}

\def\an{{\rm an}}

\def\Sp{{\rm Spa \,}}
\def\Spec{{\rm Spec \,}}
\def\Hom{{\rm Hom \,}}

\def\id{{\rm id\,}}

\def\id{{\rm Id\,}}

\def\hocolim{\mathop{\rm hocolim}}




\def\lt{\langle}
\def\gt{\rangle}


\def\Spa{{\rm Spa\,}}

\def\Ho{{\rm Ho}}

\DeclareMathOperator{\et}{\acute{e}t}

\def\proet{{\rm \text{pro\emph{\'e}}t\,}}
\def\eff{{\rm eff\,}}

\def\comp{{\rm comp\,}}
\def\an{{\rm an\,}}

\def\Cofib{{\rm Cofib\,}}

\def\Tr{{\rm tr\,}}

\def\art{{\rm art\,}}

\def\bDM{\mathbf{DM}}
\def\bDA{\mathbf{DA}}
\def\bRigDA{\mathbf{RigDA}}
\def\bRigDM{\mathbf{RigDM}}
\def\bFormDA{\mathbf{FormDA}}

\def\bSh{\mathbf{Sh}}

\def\bPerfDA{\mathbf{PerfDA}}

\newcommand{\catD}{\mathbf{D}}

\newcommand{\catT}{\mathbf{T}}

\newcommand{\adj}[4]{#1\negmedspace: #2\rightleftarrows #3:\negmedspace #4}

\def\An{{\rm An\,}}
\DeclareMathOperator{\cat}{C}
\DeclareMathOperator{\catM}{M}

\DeclareMathOperator{\Ch}{Ch}
\DeclareMathOperator{\colim}{colim}
\DeclareMathOperator{\fh}{fh}

\DeclareMathOperator{\Nor}{Nor}

\DeclareMathOperator{\Ev}{Ev}
\DeclareMathOperator{\Cor}{Cor}
\DeclareMathOperator{\Frob}{Frob}
\DeclareMathOperator{\Mod}{-Mod}
\DeclareMathOperator{\Perf}{Perf}
\DeclareMathOperator{\Psh}{Psh}
\DeclareMathOperator{\car}{char}
\DeclareMathOperator{\Shv}{Shv}
\DeclareMathOperator{\Sm}{Sm}
\DeclareMathOperator{\Sus}{Sus}
\DeclareMathOperator{\tr}{tr}
\DeclareMathOperator{\Frobet}{Frob\acute{e}t}

\newcommand{\wRigDA}{{\mathbf{sPerfDA}}}

\def\ra{\rightarrow}
\author{Federico Bambozzi, Alberto Vezzani}

\title{Rigidity for rigid analytic motives}

\address{Federico Bambozzi\\Mathematical Institute - University of Oxford, Andrew Wiles Building, Radcliffe Observatory Quarter, Woodstock Road, Oxford OX2 6GG, UK.}
\email{bambozzif@maths.ox.ac.uk}
\address{Alberto Vezzani\\LAGA - Universit\'e Paris 13, Sorbonne Paris Cit\'e, 99 av. Jean-Baptiste Cl\'ement, 93430 Villetaneuse, France.}
\email{vezzani@math.univ-paris13.fr}
\bibliographystyle{plain}

\begin{document}
\subjclass{Primary 14C15, 14G22; Secondary 11G25, 19F27}
\keywords{Rigidity Theorem, Rigid Analytic Varieties, Motives, Tilting equivalence}
\thanks{The first author acknowledges the University of Regensburg with the support of the DFG funded CRC 1085 "Higher Invariants. Interactions between Arithmetic Geometry and Global Analysis". The second author was partially supported by the ANR Grant PERCOLATOR: ANR-14-CE25-0002-01 and by the ANR JCJC Grant PERGAMO: ANR-18-CE40-0017.}

\begin{abstract}
	In this paper we prove the Rigidity Theorem for motives of rigid analytic varieties over a non-Archimedean valued field $K$. We prove this theorem both for motives with transfers and without transfers in a relative setting. Applications include the construction of \'etale realization functors, an upgrade of the known comparison between motives with and without transfers and an upgrade of the rigid analytic motivic tilting equivalence, extending them to $\Z[1/p]$-coefficients.
\end{abstract}

\maketitle

\tableofcontents

\section*{Introduction}
The history of the Rigidity Theorem for motives traces back (at least) to the computations of algebraic $K$-theory groups made by Suslin in the   eighties  to prove  the Quillen-Lichtenbaum Conjecture \cite{Sus2,Sus1}. In the subsequent decade, Suslin and Voevodsky realized that those computations were special cases of a general phenomenon common to all homotopy-invariant sheaves with finite coefficients over the big \'etale site over a scheme \cite{VSF}. In modern terms these results are referred to as ``Rigidity Theorems'' and can be rephrased using the language of motives by saying that the category $\bDA_{\et}(S,\La)$ of derived \'etale stable motives without transfers over a scheme $S$ is equivalent to the derived category $\catD(S_{\et},\La)$ of  $\La$-sheaves on the small \'etale site over  $S$, if $\La$ is an $N$-torsion ring with $N \in \N$ invertible in $S$ (under some hypotheses on the cohomological dimension of $S$, see \cite{ayoub-etale}).

%
Other instances of ``Rigidity'' are known. For example, the one for motives with transfers  \cite{CD-etale}, or for $K$-theory and Chow groups (see \cite{bk,gabK}).
This paper treats the Rigidity Theorem in the context of motives of rigid analytic varieties over non-Archimedean valued fields, introduced in \cite{ayoub-rig} by Ayoub, both in their version with and without transfers. More precisely, given any normal rigid analytic variety $S$ over $K$, we denote by $\bRigDA_{\et}(S, \La)$ (resp. $\bRigDM_{\et}(S, \La)$) the category of \'etale motives without transfers (resp. with transfers) over $S$ with coefficients in the ring $\La$. The precise definition of these categories is recalled in the first section of the paper. Our main result  is the following theorem.
 \begin{thm*}[{\ref{thm:rigmain}}]
 	Let $S$ be a normal rigid analytic variety over a non-Archimedean field $K$ with $\ell$-finite cohomological dimension, for all primes $\ell$ invertible in  the residue field $k$ of $K$, and let $\La$ be a $N$-torsion ring, where $N$ is a positive integer invertible in $k$. The functors:
 	$$
 	\L\iota^*\colon {\bD(S_{\et},\Lambda)}\ra{\bRigDA_{\et}(S,\Lambda)}
 	$$
 	$$
 	\L\iota^*\colon {\bD(S_{\et},\Lambda)}\ra{\bRigDM_{\et}(S,\Lambda)}
 	$$
 	are equivalences of monoidal DG-categories.%
 \end{thm*}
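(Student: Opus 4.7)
The plan is to adapt the Suslin--Voevodsky--Ayoub strategy from the algebraic setting to the rigid analytic one. The inclusion $\iota\colon S_{\et}\hookrightarrow \RigSm/S$ of the small étale site of $S$ into the site of smooth rigid analytic varieties over $S$ induces, after étale sheafification, $\mathbb{B}^1$-localization and $T$-stabilization, an adjoint pair $\L\iota^*\colon \bD(S_{\et},\La)\rightleftarrows \bRigDA_{\et}(S,\La)\colon \mathrm{R}\iota_*$ (and similarly with transfers). To prove $\L\iota^*$ is an equivalence I would verify two things: (a) the unit $\id\to \mathrm{R}\iota_*\L\iota^*$ is an equivalence on a generating family of $\bD(S_{\et},\La)$; and (b) the essential image of $\L\iota^*$ generates $\bRigDA_{\et}(S,\La)$ as a triangulated category closed under the relevant (homotopy) colimits. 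Monoidality of the resulting equivalence is then automatic from the symmetric monoidal left-adjoint nature of $\L\iota^*$.

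For (a), I would exploit the continuity of $S\mapsto \bD(S_{\et},\La)$ and of $S\mapsto \bRigDA_{\et}(S,\La)$ in the base, combined with étale base change for $\mathrm{R}\iota_*$, to reduce to the case of the constant sheaf $\La$ (and its pullbacks). This unwinds to the assertion that for $X\in \RigSm/S$ the group $\Hom_{\bRigDA_{\et}(S,\La)}(M(X),\La[n])$ computes the small étale cohomology $H^n_{\et}(X,\La)$. The decisive geometric input is the $\mathbb{B}^1$-homotopy invariance of étale cohomology with $N$-torsion coefficients, which is a consequence of the Berkovich--Huber vanishing $H^i_{\et}(\mathbb{B}^1_K,\La)=0$ for $i>0$ when $N$ is invertible in $k$. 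This forces the étale cohomology presheaf of $X$ on $\RigSm/S$ to be already $\mathbb{B}^1$-local, so its $\mathbb{B}^1$-localization agrees with itself; stability under Tate twists then extends the comparison to all bidegrees. For (b), the target is compactly generated by the motives $M(X)$ of smooth $X\to S$; using that smooth morphisms of rigid analytic varieties are étale-locally compositions of projections from relative polydisks $\mathbb{B}^n$, together with $\mathbb{B}^1$-invariance and étale descent, one rewrites $M(X)$ as a homotopy colimit of images under $\L\iota^*$ of sheaves from the small étale site.

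The main obstacle is two-fold. First, the hypothesis of $\ell$-finite cohomological dimension is crucial to guarantee that $\mathrm{R}\iota_*$ commutes with infinite direct sums (so that it preserves compact objects and interacts correctly with the generation argument), exactly as in the algebraic case treated in \cite{ayoub-etale}; without it, the reduction to generators in step (a) breaks down. Second, the passage from $\bRigDA_{\et}$ to motives with transfers $\bRigDM_{\et}$ requires either a parallel argument using presheaves with transfers---whose rigidity again reduces to the same cohomological vanishing on $\mathbb{B}^1$ combined with a Suslin--Voevodsky-style analysis of $\La_{\tr}$-correspondences in the rigid analytic setting---or, when available, a deduction from the untransferred case via a comparison functor which, under $N$-torsion coefficients, is an equivalence. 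Managing these two points in the relative setting over an arbitrary normal rigid analytic base $S$ is the technical heart of the proof.
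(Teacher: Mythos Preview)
Your outline for (a) is roughly on the right track for the \emph{effective} categories: the key input is indeed $\B^1$-invariance of \'etale cohomology with torsion coefficients (this is Proposition~\ref{prop:emb_eff} in the paper). However, the sentence ``stability under Tate twists then extends the comparison to all bidegrees'' hides a genuine difficulty in the case without transfers. Passing from the effective to the stable category requires knowing that $\L\iota^*$ intertwines the Tate twist with tensoring by $\mu_N$, and that the resulting spectrum-level comparison is an equivalence. The paper handles this via an explicit analysis of $\hocolim_n \R\Hom(\T^{\otimes n},-)$ together with Poincar\'e duality in \'etale cohomology (Theorem~\ref{thm:embedding_DA}); it is not automatic. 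For $\bRigDM$ the situation is simpler because $\T^{\tr}\cong\mu_N[2]$ holds already effectively, so the effective and stable categories coincide.

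Your argument for (b) has a real gap. You write that a smooth $X\to S$ is \'etale-locally a composition $X\xrightarrow{e}\B^n_S\to S$ with $e$ \'etale, and that $\B^1$-invariance plus \'etale descent then exhibits $M(X)$ as built from $\L\iota^*$ of small-\'etale sheaves. But $e$ is \'etale over $\B^n_S$, not over $S$: the object $\La_S(X)$ lives in the image of $\L\iota^*$ from $\bD((\B^n_S)_{\et},\La)$, not from $\bD(S_{\et},\La)$, and $\B^1$-invariance only tells you $M(\B^n_S)\cong\La$, not that motives of varieties \'etale over $\B^n_S$ are Artin over $S$. There is no elementary descent step that closes this gap. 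The paper's proof of essential surjectivity is accordingly much heavier: one locally embeds $X$ as an open in the analytification of a smooth \emph{algebraic} variety over an affinoid (Proposition~\ref{prop:immersion}), passes to a formal model, invokes the \emph{relative} algebraic Rigidity Theorem on the special fiber to see that the relevant motive over the compactification is Artin, and then uses the six-functor formalism (in particular $f_!\cong f_\sharp(-)(-d)[-2d]$ for smooth $f$ and the commutation of $p_*^{\an}$ with $\iota^*$, Proposition~\ref{lemma:projective_base_change}) to push this down to $S$. The introduction of the paper stresses exactly this point: even over $S=\Spa K$, the proof for $\bRigDA$ needs the full relative algebraic Rigidity Theorem and the six functors; your local-structure-of-smooth-maps argument is not a substitute.
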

As in the algebraic situation, $\bD(S_{\et}, \La)$ denotes the derived category of unbounded complexes of \'etale sheaves of $\La$-modules over the small \'etale site and the functors $\L\iota^*$ arise naturally from the inclusion of the small \'etale topos into the big one. %

We remark that the theorem above is a \emph{generalization} of the usual Rigidity Theorem,  corresponding to the case in which $K$ is trivially valued. Nonetheless, to our knowledge the original algebraic proofs   can \emph{not} be adapted easily to the non-Archimedean context. Our strategy is rather to \emph{use} algebraic Rigidity to deduce the rigid one, by means of the analytification functors and the relation between rigid varieties and formal schemes. We also remark that, even for proving our statement over a field $S=\Spa K$ for motives without transfers, the full relative Rigidity Theorem for schemes is used. Indeed, the six functors formalism plays a crucial role in our proof (see Section \ref{sub:ful}). This is no longer true for motives with transfers, as we show in the appendix, for which a more direct and geometric proof is possible in the absolute case.

Just like its algebraic versions, the theorem above has some interesting immediate consequences, discussed in the last section of the paper. They constitute our main motivation for proving the Rigidity Theorem in the non-Archimedean setting.
\begin{enumerate}
\item We can construct  the $\ell$-adic realization functor for analytic motives, following the approach of Ayoub \cite{ayoub-etale}.%
\item We can prove an equivalence between rigid analytic motives with and without transfers with coefficients over $\Z[1/p]$ where $p$ is the residue exponential characteristic of $K$.
\item Over a perfectoid field, the motivic tilting equivalence (\cf \cite{vezz-fw}) can be promoted to $\Z[1/p]$-coefficients, and the $\ell$-adic realization functors can be shown to be compatible with it.
\end{enumerate}
We remark that, due to the intricate definition of the rigid motivic tilting equivalence, the proof of the last application is more convoluted that the first two.

The paper is structured as follows. In Section \ref{sec:motives} we recall the basic definitions of the theory of motives of non-Archimedean analytic spaces and we fix the notation used throughout the rest of the paper. %
The core of the paper is Section \ref{sec:thm_no_transfers} where the Rigidity Theorem is stated and proved. The proof is divided in two main steps: the first one, shown in Section \ref{sub:emb} consists in proving that the functors ${\L\iota^*}$ are fully faithful and the second one, shown in Section \ref{sub:ful}, consists in checking that they are essentially surjective. %
 The applications of the main theorem listed above are discussed in Section \ref{sec:applications}. %
Finally, in the Appendix we present a  more ``geometric'' proof of the Rigidity Theorem for $\bRigDM_{\et}(K,\La)$ for which we use as ingredients the main ideas of the proof of \cite[Th\'eor\`eme 2.5.34]{ayoub-rig}, algebraic rigidity over fields,  and  Temkin's results on alterations.
\subsubsection*{Acknowledgments}

The authors would like to thank Joseph Ayoub, Federico Binda, Denis-Charles Cisinski, Florent Martin, Helene Sigloch for important suggestions and discussions about the topics of the paper, and an anonymous referee for his/her valuable comments. %

\section{Non-Archimedean \'etale motives} \label{sec:motives}

The following notation is fixed throughout the  paper. We will use the language of rigid analytic varieties following Tate, see \cite{BGR}. On the other hand, we recall  that the (topos theoretic) underlying topological space of a rigid analytic variety is described by the adic spectrum (see \cite{Huber}) and we will typically use notations and results written in the language of adic spaces, following Huber. In particular, we will use the notation $\Sp$ to denote the analytic spectrum. %

\begin{itemize}
	\item With $K$ we always denote a field which is complete with respect to a (fixed) non-trivial non-Archimedean valuation  $||\cdot||\colon K\ra\R_{\geq0}$ with a finite $\ell$-cohomological dimension, for any prime $\ell$ which is invertible in its residue field. 
	\item A \emph{Tate algebra over $K$ } is a topological ring $A$ obtained as a quotient of the completion $K\langle x_1,\ldots,x_n\rangle $ of the polynomial ring $K[x_1,\ldots,x_n]$ with respect to the Gau\ss{} norm. Its analytic spectrum is denoted by $\Sp A$.
	\item An \emph{affinoid rigid analytic variety} [resp. a \emph{rigid analytic variety}] is a locally ringed space which is [resp. locally] isomorphic to the analytic spectrum $\Sp A$ of a Tate algebra $A$.  %
	\item We denote with $\B_K^1$ the closed unit ball. This is the affinoid space  $\Sp K \lt T \gt$. For any analytic space $S$ we denote by $\B_S^1 = \B_K^1 \times_K S$ the (closed) unit disc over $S$. 
	\item Let $A$ be a Tate algebra. The analytification functor from schemes of finite type over $\Spec A$ to analytic varieties over $\Sp A$ is denoted $(.)^\an$. It is defined by means of the following universal property:
	$$
	\Hom_{\Sp A}(\Sp B,X^{\an})\cong\Hom_{\Spec A}(\Spec B,X)
	$$
	for any Tate algebra $B$ over $A$.
	\item If $\La $ is a (commutative, unital) ring, and $\cat$ is a category [resp. a site] we let $\Psh(\cat,\La)$ [resp. $\Shv(\cat,\La)$] be the category of presheaves [resp. sheaves] on $\cat$ with values in $\La$-modules.
	\item	Let $S$ be a rigid analytic variety over $K$. We denote by $S_{\et}$ the small \'etale site over $S$ and by  $\Sm_S$  the site of smooth analytic varieties over $S$ equipped with the \'etale topology. 
	\item For any scheme or rigid analytic variety $S$, $\bD(S_{\et}, \Lambda)$ denotes the (unbounded)  derived category of the category of sheaves of $\Lambda$-modules over the small \'etale site over $S$. %
\end{itemize}
 
We also recall the definitions of the (unbounded, derived, \'etale) motivic categories of algebraic and rigid analytic varieties as defined by Ayoub \cite{ayoub-rig}. We will make use of the language of model categories (even though everything can be restated in terms of $DG$-categories or $\infty$-categories) for which we refer to \cite{hovey}. Categories of complexes of presheaves $\Ch(\Psh(\cat,\La))$ will be endowed with the projective model structure (see \cite{Dug}).

\begin{defn} \label{defn:effective_etale_motives}Let $S$ be a normal rigid analytic variety over $K$. 
The category $\bRigDA_{\et}^\eff(S, \Lambda)$ (the category of \emph{effective \'etale rigid motives over $S$}) is the homotopy category of the  Bousfield localization of $\Ch(\Psh(\Sm_S, \Lambda))$ over the \'etale weak equivalences (that is, morphisms inducing quasi-isomorphisms of the induced complexes of \'etale sheaves) and over $\B^1$-homotopies (shifts of the maps of representable presheaves induced by the projection morphisms $\B^1_Y\ra Y$ with $Y\in\Sm_S$). If $Y$ is in $\Sm_S$ we denote by $\Lambda_S(Y)$ the object in $\bRigDA_{\et}^{\eff}(S, \Lambda)$ associated to the presheaf represented by $Y$. 
\end{defn}

We refer to  Section 2 of \cite{vezz-berk} for more details about   $\bRigDA_{\et}^\eff(S, \Lambda)$ (in its Nisnevich form). We only point out that this DG-category can be defined as a Verdier quotient of $\bD(\Psh(\Sm_S, \Lambda))$ with respect to \'etale descent and $\B^1$-invariance. It therefore enjoys the following universal property: any functor $F$ from $\Sm_S$ to a $\La$-enriched model category $\catM$ satisfying \'etale descent and $\B^1$-invariance admits an extension to a Quillen adjunction $\L F^*\colon \bRigDA_{\et}^\eff(S, \Lambda)\rightleftarrows\Ho(\catM)\colon\R F_*$. See \cite{CG,Dug} for details.

\begin{rmk}
The category $\bRigDA_{\et}^\eff(S, \Lambda)$ is endowed with a monoidal structure, for which  $\Lambda_S(Y)\otimes\Lambda_S(Y')\cong\Lambda_S(Y\times_SY')$ for any $Y,Y'$ in $\Sm_S$ (see \cite[Propositions 4.2.76 and 4.4.63]{ayoub-th2}). The same is true for the category $\bD(S_{\et}, \Lambda)$.
\end{rmk}

\begin{defn} \label{defn:Tate_motive} \label{defn:etale_motives}
We denote with $\T = \T_S \in \bD(\Psh(\Sm_S, \Lambda))$ the complex of sheaves $$\Cofib( \Lambda[\Hom(-, \G_{m, S}^\an)] \to \Lambda[\Hom(-, \A_S^\an)]).$$ 
The object $\Lambda_S(1)\colonequals\T_S[-2] \in \bRigDA_{\et}^\eff(S, \Lambda)$ is called the \emph{Tate (twisting) motive}. The category $\bRigDA_{\et}(S, \Lambda)$ (the category of \emph{\'etale rigid motives without transfers} over $S$) is defined to be the monoidal  $\T_S$-stabilization of $\bRigDA_{\et}^\eff(S, \Lambda)$. Also in this category,  we denote by $\Lambda_S(Y)$ the motive  associated to the presheaf represented by $Y$. The endofunctor $M\mapsto M\otimes T^{\otimes n}$ will be written as $M\mapsto M(n)$ and its quasi-inverse by $M\mapsto M(-n)$ for any $n\in\N$. The motives $M(n)$ are called the \emph{twists} of $M$.
\end{defn}

For the general theory of stabilization of categories the reader is referred to \cite{hovey-sp} and \cite{ayoub-th2}. We only remark that $\bRigDA_{\et}(S, \Lambda)$ is once again the homotopy category of the model category of symmetric $T_S$-spectra of (the $(\B^1,\et)$-localization of) $\Ch(\Psh(\Sm_S, \Lambda))$ and that 
there is a natural (left Quillen) functor
$$
\bRigDA_{\et}^\eff(S, \Lambda) \ra \bRigDA_{\et}(S, \Lambda)
$$ 
which is a monoidal functor also enjoying a universal property, with respect to making the endo-functor $(-) \otimes \T_S$ invertible. 
 We will typically use spectra (rather than symmetric spectra)  as a model of $\bRigDA_{\et}(S, \Lambda)$ in proofs, which is allowed by means of \cite[Th\'eor\`eme 4.3.79]{ayoub-th2}.

\begin{rmk}
In the case when $K$ has is equipped with the trivial valuation, the theory of rigid analytic varieties collapses to the usual theory of algebraic varieties. The motivic categories defined above coincide then with the classical categories of \'etale motives, denoted by $\bDA^{\eff}_{\et}(S,\La)$ and $\bDA_{\et}(S,\La)$ (see \cite{ayoub-etale}).%
\end{rmk}

In this work we also deal with motives with transfers, whose definition we now recall (see also \cite{ayoub-rig} and \cite{vezz-DADM}). 

\begin{defn} \label{defn:rigid_correspondence}\label{defn:effective_etale_motives_transfer}
	Let $S$ be a normal variety over $K$. We let $\Nor_S$ be the category of quasi-compact normal varieties over $S$ and we let the $\fh$-topology be the one generated by those covering families $\{f_i\colon X_i\ra X\}_{i\in I}$ such that $I$ is finite, and the induced map $\bigsqcup f_i\colon \underset{i\in I}\bigsqcup X_i\ra X$ is finite and surjective. We define the category $\Cor_S$ as the category whose objects are those of $\Sm_S$ and whose morphisms $\Hom(X,Y)$ are computed in $\Shv_{\fh}(\Nor_S)$. We let $\bRigDM^{\eff}_{\et}(S,\La)$ (the category of \emph{effective \'etale rigid motives with transfers} over $S$) be the homotopy category of the Bousfield localization of $\Ch\Psh(\Cor_S,\La)$  over those morphisms $f$ which are \'etale weak equivalences as morphisms in $\Ch\Psh(\Sm_S,\La)$, and over $\B^1$-homotopies (shifts of the maps of representable presheaves induced by the projection morphisms $\B^1_Y\ra Y$ with $Y\in\Sm_S$).  If $Y$ is in $\Sm_S$ we denote by $\Lambda_S^{\tr}(Y)$ the object in $\bRigDM_{\et}^{\eff}(S, \Lambda)$ associated to the presheaf represented by $Y$.
\end{defn}

\begin{defn} \label{defn:etale_motives_transfer}We consider the object $\T^\Tr  \in \bD(\bSh(\Cor_S, \Lambda))$  given by the complex $$\Cofib( \Hom_{\Cor_S}(-, \G_{m, S}^\an) \otimes \La \to \Hom_{\Cor_S}(-, \A_S^\an)\otimes \La).$$ 
The category $\bRigDM_{\et}(S, \Lambda)$ (briefly, \emph{\'etale rigid motives with transfers over $S$}) is defined to be the $\T^\Tr_S$-stabilization of $\bRigDM_{\et}^{\eff}(S, \Lambda)$. If $Y$ is in $\Sm_S$ we denote by $\Lambda^{\tr}_S(Y)$ the motive in $\bRigDM_{\et}(S, \Lambda)$ associated to the presheaf represented by $Y$. The endofunctor $M\mapsto M\otimes (T^{\tr})^{\otimes n}$ will be written as $M\mapsto M(n)$ and its quasi-inverse as $M\mapsto M(-n)$ for any $n\in\N$. The motives $M(n)$ are called the \emph{twists} of $M$.
\end{defn}
\begin{rmk}
Once again, whenever $K$ is endowed with the trivial valuation, the definitions above recover the usual categories of (derived, \'etale) motives with transfers over $S$, denoted by $\bDM^{\eff}_{\et}(S,\La)$ and $\bDM_{\et}(S,\La)$  (see  \cite{CD-etale}).
\end{rmk}
\begin{rmk}
There is a more down-to-earth description of the category $\Cor_S$ in terms of multi-valued functions (or rather, Zariski closed subvarieties of the product which are finite over the first component) see \cite[Remarque 2.2.21]{ayoub-rig}.
\end{rmk}

We recall that in a triangulated category, an object $S$ is \emph{compact} if the functor $\Hom(S,-)$ commutes with direct colimits. A triangulated category is  \emph{compactly generated} if there is a set of compact objects $S$ for which the smallest triangulated subcategory with small sums containing them is the whole category. This property is technically very convenient, and the following fact will be used several times throughout the paper.

\begin{prop}\label{prop:cptgen}
	Let $S$ be a normal rigid analytic variety over $K$. The motivic categories $\bRigDA^{\eff}_{\et}(S,\La)$ [resp. $\bRigDM_{\et}^{\eff}(S,\La)$] are compactly generated by shifts of motives of the form $\La_S(A)$ [resp. $\La^{\tr}_S(A)$] with $A$ smooth affinoid over $S$. Similarly, the motivic categories $\bRigDA_{\et}(S,\La)$ [resp. $\bRigDM_{\et}(S,\La)$] are compactly generated by shifts and twists of motives of the form $\La_S(A)$ [resp. $\La^{\tr}_S(A)$] with $A$ smooth affinoid over $S$.
\end{prop}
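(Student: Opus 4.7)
The plan is to argue separately for compactness and for generation in the effective case, and then to deduce the stable statement as a formal consequence. The argument for $\bRigDM^{\eff}_{\et}(S,\La)$ will run in parallel with that for $\bRigDA^{\eff}_{\et}(S,\La)$, since one has only to replace representable $\La$-modules by representable sheaves on $\Cor_S$, and the formal properties we need---\'etale descent together with the presentation of the localization by maps of representables---are the same (\cite{ayoub-rig}).

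For compactness I would start from the observation that the unlocalized derived category $\bD(\Psh(\Sm_S,\La))$ is compactly generated by shifts of representable presheaves: for a fibrant replacement $F$ one has $\Hom(\La_S(Y)[n],F)\cong H^{-n}(F(Y))$, and evaluation at $Y$ commutes with all colimits of presheaves. The key point is then that the Bousfield localization giving $\bRigDA^{\eff}_{\et}(S,\La)$ can be presented as a localization at a set of maps between compact objects. The $\B^1$-maps $\La_S(\B^1_A)\to\La_S(A)$ are such maps whenever $A$ is a smooth affinoid, and for the \'etale localization I would use that smooth affinoids are quasi-compact and quasi-separated, so that every \'etale cover can be refined to a finite one and the associated \v Cech nerves become finite coproducts of motives of smooth affinoid \'etale neighbourhoods. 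Standard preservation results for compactly generated Bousfield localizations (\cite{ayoub-th2}) will then imply that $\La_S(A)$ and $\La_S^{\tr}(A)$ remain compact after localization.

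For generation I would use that every $Y\in\Sm_S$ admits an admissible cover by smooth affinoids. Iterating this construction---at each stage covering the multi-fold intersections by smooth affinoids, which is possible since such intersections are themselves smooth analytic varieties---produces a \v Cech hypercover $U_\bullet\to Y$ in which every $U_n$ is a small coproduct of smooth affinoids. \'Etale descent, which holds in $\bRigDA^{\eff}_{\et}(S,\La)$ by construction, then identifies $\La_S(Y)$ with $\hocolim_{[n]\in\Delta^{\op}}\La_S(U_n)$, exhibiting $\La_S(Y)$ as a homotopy colimit of small coproducts of motives of smooth affinoids. Hence $\La_S(Y)$ lies in the localizing subcategory generated by them, which together with compactness proves the effective assertion.

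The stable case will follow formally: monoidal $T$-stabilization of a compactly generated symmetric monoidal model category remains compactly generated, with compact generators given precisely by shifts and twists of the compact generators of the effective category (\cite[Chapitre 4]{ayoub-th2}). The main obstacle in the whole argument is the compactness step: to ensure that $\La_S(A)$ survives as a compact object of $\bRigDA^{\eff}_{\et}(S,\La)$ one really needs to cut the generating maps of the \'etale localization down to maps between compact objects, and this rests on genuinely exploiting the quasi-compactness and quasi-separatedness of smooth affinoids. Without this input, compactness of $\La_S(A)$ after localization would not be automatic.
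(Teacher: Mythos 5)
Your plan for the generation step (\v{C}ech hypercovers by smooth affinoids, \'etale descent, homotopy colimits) and for deducing the stable statement from the effective one (compact generation persists under monoidal $T$-stabilization, cf.\ \cite[Chapitre 4]{ayoub-th2}) is sound. However, the compactness step has a genuine gap, and it is precisely the point that the paper's proof flags.

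You argue that $\La_S(A)$ stays compact after the \'etale localization because smooth affinoids are quasi-compact and quasi-separated, so that \'etale covers refine to finite ones and the Bousfield localization is ``at a set of maps between compact objects''. This is not enough. For the \emph{Nisnevich} topology one can indeed generate the localization by \v{C}ech descent for elementary squares (maps between compacts), but for the \'etale topology the local weak equivalences are not generated by \v{C}ech nerves of finite covers: one needs hyperdescent, and without an additional finiteness input there is no reason for $\Hom(\La_S(A)[n],-)$ to commute with arbitrary direct sums in the localized category. Concretely, the algebraic analogue already fails without hypotheses: over a base with unbounded \'etale $\ell$-cohomological dimension (e.g.\ $\R$ with $\ell=2$), $\bDA_{\et}(S,\Z/\ell)$ is not compactly generated by motives of smooth affine schemes. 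What actually makes the argument go through is the assumption, in force throughout the paper, that $K$ has finite $\ell$-cohomological dimension for the relevant primes, together with a uniform bound on the \'etale cohomological dimension of affinoid rigid analytic varieties (\cite[Proposition 0.5.7]{Huber}); this is what allows one to show that \'etale cohomology of affinoids commutes with filtered colimits and hence that representables remain compact. The paper's proof consists precisely in pointing to this bound and adapting \cite[Proposition 3.19]{ayoub-etale}, where the cohomological dimension hypothesis plays exactly this role. Your proposal omits the cohomological dimension input entirely, and the qcqs/finite-cover argument you substitute for it does not close the gap.
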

\begin{proof}
	It suffices to adapt \cite[Proposition 3.19]{ayoub-etale} to the rigid setting. We remark that a bound on the cohomological dimension of affinoid rigid analytic varieties can be found in 
	\cite[Proposition 0.5.7]{Huber}.
\end{proof}

We now summarize the basic  functors between the various motivic categories introduced so far.

\begin{prop} \label{prop:recall}
Let $S$ be a normal rigid analytic variety and $\Lambda$ be a ring.
\begin{enumerate}
\item The map of sites $S_{\et}\ra\Sm_S$ induces a monoidal Quillen adjunction $$
\adj{\L\iota^*}{\bD(S_{\et},\Lambda)}{\bRigDA^{\eff}_{\et}(S,\Lambda)}{\R\iota_*}.
$$
\item The map of sites $\Sm_S \ra \Cor_S$ induces a monoidal Quillen adjunction $$
\adj{\L a_{\Tr}}{\bRigDA^{\eff}_{\et}(S,\Lambda)}{\bRigDM^{\eff}_{\et}(S,\Lambda)}{\R o_{\Tr}}.
$$
\item The $\T$-stabilization and $\T^\Tr$-stabilization functors induce monoidal Quillen adjunctions
$$
\adj{\L \Sus}{\bRigDA^{\eff}_{\et}(S,\Lambda)}{\bRigDA_{\et}(S,\Lambda)}{\R \Ev}
$$
$$
\adj{\L \Sus}{\bRigDM^{\eff}_{\et}(S,\Lambda)}{\bRigDM_{\et}(S,\Lambda)}{\R\Ev}.
$$
\item Let $S$ be an affinoid rigid analytic variety $S=\Sp A$. The analytification functor induces monoidal  Quillen adjunctions$$
\adj{\L \An^*}{\bDA^{\eff}_{\et}(\Spec A,\Lambda)}{\bRigDA^{\eff}_{\et}(S,\Lambda)}{\R\An_*}
$$
$$
\adj{\L \An^*}{\bDM^{\eff}_{\et}(\Spec A,\Lambda)}{\bRigDM^{\eff}_{\et}(S,\Lambda)}{\R\An_*}.
$$
\item\label{Tcomp} There is a canonical isomorphism $$\L a_\Tr \T_S \cong \T^\Tr_S$$
in $\bRigDM^{\eff}_{\et}(S,\Lambda)$ and in case $S=\Spa A$ is affinoid, there are  canonical isomorphisms $$\L \An^*(\T_{\Spec A}) \cong \T_S
\qquad\L \An^*(\T^{\Tr}_{\Spec A}) \cong \T_S^{\Tr}$$
in $\bRigDA^{\eff}_{\et}(S,\Lambda)$ and $\bRigDM^{\eff}_{\et}(S,\Lambda)$ respectively.
\end{enumerate}
\end{prop}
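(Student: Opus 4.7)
The proposition bundles together five Quillen adjunctions that arise from essentially formal constructions at the level of presheaves; the substance in each case is verifying compatibility with the Bousfield localizations defining the motivic categories. I would treat the five items in order, emphasizing the uniform pattern rather than grinding through each verification separately.

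For parts (1) and (2), the starting point is the obvious continuous functor $S_{\et}\hookrightarrow\Sm_S$ (inclusion of the small étale site into the big smooth one) and the functor $\Sm_S\to\Cor_S$ (identity on objects, $\La$-linear enrichment by finite correspondences). In each case, the right adjoint on presheaf categories is restriction, which preserves objectwise fibrations and quasi-isomorphisms, so the pair is automatically a Quillen adjunction on $\Ch(\Psh(-,\La))$ with the projective model structure. To descend through the Bousfield localizations, it suffices to show the left adjoint sends the generating local weak equivalences to weak equivalences in the target: étale hypercovers in $S_{\et}$ remain étale hypercovers in $\Sm_S$, and the $\B^1$-projections map to $\B^1$-projections (or are absent, as in (1) where the source has no $\B^1$-localization). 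Monoidality of the left adjoints follows from the fact that fibered products of representables are computed compatibly in the two sites, so $\iota^*(X\times_SY)\cong\iota^*(X)\otimes\iota^*(Y)$ on generators, and then on all objects by cocontinuity.

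For (3), I would simply invoke the general formalism of symmetric monoidal $\T$-stabilization of Hovey, in the form recalled by Ayoub in \cite{ayoub-th2}: since $\T_S$ and $\T^\Tr_S$ are cofibrant (as cofibers of maps between cofibrant representables), the suspension/evaluation pair is a monoidal Quillen adjunction by construction. For part (4), analytification $X\mapsto X^{\an}$ defines a functor $\Sm_{\Spec A}\to\Sm_S$ (and likewise on the associated correspondence categories); left Kan extension along it produces $\L\An^*$, whose right adjoint $\R\An_*$ preserves fibrations and objectwise quasi-isomorphisms at the presheaf level. To see that $\L\An^*$ descends to the localizations, I would check that analytification sends étale covers to étale covers and that $(\A^1)^{\an}$ is $\B^1$-contractible in $\bRigDA^{\eff}_{\et}(S,\La)$; the latter follows by writing $(\A^1)^{\an}$ as a filtered colimit of closed disks and using compactness, and is the only point that is not purely formal. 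Monoidality is automatic from $(X\times_{\Spec A}Y)^{\an}\cong X^{\an}\times_SY^{\an}$.

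Finally, part (5) is a direct computation. Since $\L a_\Tr$ is a left adjoint it commutes with cofibers, and it sends $\La_S(Y)$ to $\La^\tr_S(Y)$ on representables, whence
\[
\L a_\Tr\T_S = \Cofib\bigl(\La^\tr_S(\G_{m,S}^\an)\to\La^\tr_S(\A^\an_S)\bigr) = \T^\Tr_S.
\]
The same computation applies to $\L\An^*$, which commutes with cofibers and sends $\La_{\Spec A}(Y)$ to $\La_S(Y^\an)$; combined with the equalities $(\G_{m,\Spec A})^\an=\G^\an_{m,S}$ and $(\A^1_{\Spec A})^\an=\A^\an_S$, this yields both isomorphisms in $\bRigDA^\eff_{\et}$ and $\bRigDM^\eff_{\et}$ at once. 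The only genuinely non-formal ingredient across the whole proposition is thus the $\B^1$-contractibility of the analytification of $\A^1$ needed in (4); everything else reduces to tracking universal properties through Bousfield localization and $\T$-stabilization.
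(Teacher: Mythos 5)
Your approach matches the paper's in spirit: the paper's own proof is essentially a string of citations to the universal properties of localization and stabilization (plus references for the analytification-and-$\A^1$-homotopy compatibility and the Tate object comparisons), and you reconstruct the content of those references by hand, which is what the exercise calls for. Two points deserve a caveat, though. First, in item (2) you fold the argument in with (1) and write that ``étale hypercovers remain étale hypercovers'', but the issue for $\L a_\Tr$ is not hypercovers: it is whether the functor ``adding transfers'' sends étale local weak equivalences of complexes of presheaves on $\Sm_S$ to étale local weak equivalences on $\Cor_S$, i.e.~whether étale sheafification commutes with transfers. This is a genuine input (it is what the paper's citation to Paragraph~2.1.7 of Cisinski--Déglise supplies), and it is more than a formal bookkeeping of sites. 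So your closing remark that the only non-formal ingredient is the $\B^1$-contractibility of $(\A^1)^{\an}$ undersells (2). Second, for (5) the identification $\L a_\Tr\La_S(Y)\cong\La^\Tr_S(Y)$ and $\L\An^*\La_{\Spec A}(Y)\cong\La_S(Y^{\an})$ on representables is correct because representables are cofibrant in the projective model structure, but it is worth saying so explicitly, since otherwise the replacement of $a_\Tr$ and $\An^*$ by their derived functors is not free; the paper outsources this to Ayoub's Lemme~2.5.18 and Proposition~1.4.17. Apart from these two places where you should acknowledge the real content, your decomposition and the checks you perform are exactly what the cited results establish.
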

\begin{proof}For the proof of (2) one can easily adapt the argument of \cite[Paragraph 2.1.7]{CD-etale} to the rigid setting. For the other points, the existence of the Quillen pairs follows at once from the universal property of the categories of motives, stabilizations and localizations (to see that the analytification functor preserves $\A^1$-homotopies we refer to \cite[Proposition 1.3.6 and Theor\'eme 2.5.24]{ayoub-rig}). For the last isomorphisms, see  \cite[Lemme 2.5.18]{ayoub-rig} and \cite[Proposition 1.4.17]{ayoub-rig}.
\end{proof}

We now introduce the canonical functors that we will be mostly interested in.

\begin{defn} \label{rmk:functors_LIDA_LIDM}
By composing the functors of the previous proposition, we obtain the following monoidal Quillen adjunctions (all denoted by $\L\iota^*$ and $\R\iota_*$ by abuse of notation)
	$$
	\adj{\L\iota^*}{\bD(S_{\et},\Lambda)}{\bRigDA^{\eff}_{\et}(S,\Lambda)}{\R\iota_{*}}
	$$
		$$
	\adj{\L\iota^*}{\bD(S_{\et},\Lambda)}{\bRigDA_{\et}(S,\Lambda)}{\R\iota_{*}}
	$$
		$$
	\adj{\L\iota^*}{\bD(S_{\et},\Lambda)}{\bRigDM^{\eff}_{\et}(S,\Lambda)}{\R\iota_{*}}
	$$
	and
	$$
	\adj{\L\iota^*}{\bD(S_{\et},\Lambda)}{\bRigDM_{\et}(S,\Lambda)}{\R\iota_{*}}
	$$
	The full triangulated subcategory with small sums of $\bRigDA_{\et}(S,\Lambda)$ [resp. of $\bRigDM_{\et}(S,\Lambda)$] generated by the essential image of $\L\iota^*$ will be called the category of \emph{rigid analytic Artin motives over $S$ [{with transfers}]}.
\end{defn}

	Take $\La$ to be $N$-torsion with $N$ invertible in $K$. 
The classic Rigidity Theorem can be restated by saying that algebraic Artin motives (i.e. Artin motives with respect to a trivial valuation of $K$) are equivalent to $\catD(S_{\et},\La)$ as well as to the categories of stable motives $\bDA_{\et}(S,\La)$ and $\bDM_{\et}(S,\La)$, under mild hypotheses on $S$  (see \cite{ayoub-etale}).

Also in this paper, we will focus on the case of $N$-torsion coefficients, with $N$ coprime to the residual (exponential) characteristic. Also in this setting, algebraic Artin motives are easily seen to embed in the \emph{effective} categories of motives.

\begin{prop}\label{prop:emb_eff}
	If $\Lambda$ is a $N$-torsion ring with $N$ coprime to the residual (exponential) characteristic of $K$, then the functors
		$$
	\adj{\L\iota^*}{\bD(S_{\et},\Lambda)}{\bRigDA^{\eff}_{\et}(S,\Lambda)}{\R\iota_{*}}
	$$
	and
	$$
	\adj{\L\iota^*}{\bD(S_{\et},\Lambda)}{\bRigDM^{\eff}_{\et}(S,\Lambda)}{\R\iota_{*}}
	$$
	are fully faithful.
\end{prop}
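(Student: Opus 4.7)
The plan is to use the $(\L\iota^*,\R\iota_*)$-adjunctions and reduce full faithfulness to showing that the unit $F \to \R\iota_*\L\iota^* F$ is a quasi-isomorphism for every $F \in \bD(S_\et,\La)$.

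I would first treat the case of $\bRigDA^\eff_\et$. The morphism of sites $\iota\colon S_\et \to \Sm_S$ is the inclusion of a full sub-site; pullback of étale sheaves is exact, and $\L\iota^* F$ is represented by the complex of étale sheaves on $\Sm_S$ whose value at a smooth $Y \to S$ is computed by $R\Gamma(Y_\et, \pi^* F)$, with $\pi\colon Y \to S$. In particular, its restriction to $S_\et$ recovers $F$ on the nose. The subtle point is that $\R\iota_*$, viewed in $\bRigDA^\eff_\et(S,\La)$, involves a $(\B^1,\et)$-local fibrant replacement before restricting to $S_\et$. I would therefore argue that $\L\iota^* F$ is \emph{already} $\B^1$-local as an object of $\bD(\Shv_\et(\Sm_S,\La))$; granting this, the fibrant replacement is redundant and the unit is a quasi-iso.

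$\B^1$-locality of $\L\iota^* F$ amounts to the assertion that, for every $Y \in \Sm_S$, the projection $\B^1_Y \to Y$ induces a quasi-isomorphism
\[
R\Gamma(Y_\et,\pi^* F) \xrightarrow{\sim} R\Gamma((\B^1_Y)_\et, p^*\pi^* F).
\]
This is the $\B^1$-invariance of rigid analytic étale cohomology with $N$-torsion coefficients, $N$ coprime to the residual characteristic of $K$, which is a classical fact in rigid/adic étale cohomology and is where the torsion hypothesis on $\La$ is crucially used. Once granted, $\L\iota^* F$ is $\B^1$-local, so $\R\iota_*\L\iota^* F \simeq F$ and full faithfulness follows.

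For the case of $\bRigDM^\eff_\et$, by Proposition \ref{prop:recall} the functor factors as $\L a_\Tr \circ \L\iota^*_{\DA}$, with right adjoint $\R\iota_{*,\DA}\circ \R o_\Tr$. Using the no-transfers case, it suffices to prove that the unit $M \to \R o_\Tr \L a_\Tr M$ is an isomorphism in $\bRigDA^\eff_\et(S,\La)$ whenever $M = \L\iota^*_{\DA} F$ is an Artin motive. This reflects the fact that étale $\La$-sheaves on $S_\et$ carry a canonical enhancement by transfers through trace maps for finite étale morphisms — invertible because degrees are prime to the residual characteristic — so that adding and then forgetting transfers does nothing on Artin motives. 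Equivalently, the very same argument as in the $\bRigDA^\eff$-case runs verbatim in the $\bRigDM^\eff$-setting once one knows $\B^1$-invariance of étale cohomology with transfers on $\Cor_S$, and this again follows from the torsion assumption on $\La$. The main non-formal obstacle throughout is precisely this $\B^1$-invariance of rigid analytic étale cohomology; once granted, everything else is adjunction bookkeeping.
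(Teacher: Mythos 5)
Your proposal follows the same strategy as the paper's proof: both reduce full faithfulness to showing that $\L\iota^* F$ is already $\B^1$-local in $\bD(\Shv_{\et}(\Sm_S,\La))$, which in turn rests on the $\B^1$-invariance of rigid analytic \'etale cohomology with torsion coefficients prime to the residual characteristic (Huber's acyclicity theorem; the paper cites \cite[Example 0.1.1(ii)]{Huber} and adapts \cite[Sous-lemme 4.7]{ayoub-etale}). Your handling of the transfer case via the unit $\id\Rightarrow\R o_\Tr\L a_\Tr$ and trace maps is a correct alternative reduction, but as you note it is equivalent to the paper's route of adapting \cite[Theorem 3.1.7]{CD-etale} directly, so the two proofs are essentially the same.
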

\begin{proof}
We remark that $\bD(S_{\et},\Lambda)$ can be  seen as a full subcategory of $\bD(\Shv(\Sm_S),\Lambda)$ and $\bRigDA^{\eff}_{\et}(S,\Lambda)$ can be described as the full subcategory of $\bD(\Shv(\Sm_S),\Lambda)$ of $\B^1_S$-homotopy invariant objects. In order to prove the first claim, it then suffices  to check that the objects of $\bD(S_{\et},\Lambda)$ are $\B^1_S$-homotopy invariant. In order to do so, one can adapt the proof of \cite[Sous-lemme 4.7]{ayoub-etale} using the analytic version of the acyclicity theorem \cite[Example 0.1.1(ii)]{Huber}. For the case with transfers, one can similarly adapt  the proof of \cite[Theorem 3.1.7]{CD-etale}.
\end{proof}

\section{The Rigidity Theorem} \label{sec:thm_no_transfers}

From now on, we fix a normal rigid analytic variety $S$ over $K$. Once again, we recall that $K$ is assumed to have  finite $\ell$-cohomological dimension, for any prime $\ell$ which is invertible in its residue field. 
The main aim of this section is the proof of the Rigidity Theorem, for motives with and without transfers. We state it here.

\begin{thm}\label{thm:rigmain}
	Let $S$ be a normal rigid analytic variety over $K$ and let $\La$ be a $N$-torsion ring, where $N$ is invertible in the residue field of $K$. The functors:
$$
\L\iota^*\colon {\bD(S_{\et},\Lambda)}\ra{\bRigDA_{\et}(S,\Lambda)}
$$
$$
\L\iota^*\colon {\bD(S_{\et},\Lambda)}\ra{\bRigDM_{\et}(S,\Lambda)}
$$
are equivalences of monoidal DG-categories. Moreover the canonical functor between ${\bRigDM_{\et}^{\eff}(S,\Lambda)} $ and ${\bRigDM_{\et}(S,\Lambda)} $ is an equivalence.
\end{thm}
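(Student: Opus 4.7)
The plan is to reduce the statement to the algebraic Rigidity Theorem of \cite{ayoub-etale} and \cite{CD-etale} through two separate arguments: fully faithfulness and essential surjectivity, handling the effective-vs-stable stabilization as a corollary. The proof treats the three cases ($\bRigDA_{\et}$, $\bRigDM^{\eff}_{\et}$, $\bRigDM_{\et}$) by leveraging analytification (Proposition~\ref{prop:recall}(4)), the comparison between with- and without-transfers variants (Proposition~\ref{prop:recall}(2)), and compact generation (Proposition~\ref{prop:cptgen}), in order to reduce every question to smooth affinoid motives over $S$.

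\textbf{Fully faithfulness.} Proposition~\ref{prop:emb_eff} already supplies fully faithfulness at the effective level (both with and without transfers). To pass to the stable level, it is enough to check that the Tate object $\Lambda(1)$ is already invertible on the image of $\L\iota^*$. This is the standard consequence of the Kummer short exact sequence in the small étale topos: because $N$ is invertible in the residue field (hence in $K$), one identifies $\Lambda(1)$ inside $\bD(S_{\et},\Lambda)$ with a shift of the locally constant $\Lambda$-invertible sheaf $\mu_N \otimes_{\Z/N} \Lambda$, whose tensor inverse is $\mu_N^{\vee}\otimes_{\Z/N}\Lambda$. Consequently, the $\T$-stabilization functor of Proposition~\ref{prop:recall}(3) restricted to the Artin subcategory is fully faithful, and this gives fully faithfulness of $\L\iota^*$ into the stable categories in both variants, while simultaneously showing that if $\L\iota^*\colon \bD(S_{\et},\Lambda)\to \bRigDM_{\et}^{\eff}(S,\Lambda)$ is essentially surjective then the stabilization $\bRigDM_{\et}^{\eff}(S,\Lambda)\to \bRigDM_{\et}(S,\Lambda)$ is an equivalence, yielding the moreover clause.

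\textbf{Essential surjectivity.} By Proposition~\ref{prop:cptgen} and the invertibility of the Tate twist observed above, it is enough to show that for every smooth affinoid $Y$ over $S$, the motive $\Lambda_S(Y)$ (respectively $\Lambda_S^{\tr}(Y)$) is rigid analytic Artin. The idea is to compare with the algebraic world through analytification and formal models: whenever $Y \simeq X^{\an}$ for a smooth algebraic $X/\Spec A$ with $S=\Sp A$, Proposition~\ref{prop:recall}(4)--(5) together with the algebraic Rigidity Theorem imply at once that $\Lambda_S(Y)\simeq \L\An^*(\Lambda_{\Spec A}(X))$ lies in the image of $\L\iota^*$, since it is so already for $\L\An^*$ applied to any object of $\bD((\Spec A)_{\et},\Lambda)$, and algebraic Artin motives exhaust the algebraic side. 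In general a smooth affinoid $Y/S$ is not globally the analytification of an algebraic variety, so one has to proceed étale-locally on $S$ and $Y$, use Raynaud formal models to obtain an algebraic special fibre, and invoke the full relative version of the six functors formalism (closed/open localization triangles, smooth and proper base change) to descend the Artin property from algebraic input to the rigid pieces. This is exactly the point where, as the authors emphasize, the relative algebraic Rigidity Theorem (not just the absolute one) is indispensable.

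\textbf{Main obstacle.} The hardest step is the essential surjectivity argument in Section~\ref{sub:ful}: the algebraic proofs à la Suslin--Voevodsky do not transport to the non-Archimedean setting because one lacks a direct cohomological vanishing statement on $Y$ in terms of henselizations of pairs. The substitute devised here is to feed the algebraic Rigidity into the rigid category by means of analytification and the six functors, which requires the relative statement over arbitrary normal analytic bases. Once essential surjectivity is established for $\bRigDA_{\et}$ and for $\bRigDM_{\et}^{\eff}$, the remaining assertions follow formally: fully faithfulness combined with essential surjectivity gives the two equivalences, and the moreover clause follows from the invertibility of $\Lambda(1)$ in $\bD(S_{\et},\Lambda)$ as explained above.
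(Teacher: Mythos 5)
Your overall strategy is the right one, but there is a genuine gap in the fully faithfulness part for $\bRigDA_{\et}$, and the essential surjectivity argument omits the technical lemma on which the paper's final computation hinges.

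\textbf{Fully faithfulness: the $\bRigDA$ case does not follow from invertibility of $\mu_N$.} Your argument identifies $\Lambda(1)$ inside $\bD(S_{\et},\Lambda)$ with a shift of $\mu_N\otimes\Lambda$ via Kummer, observes that this sheaf is $\otimes$-invertible, and concludes that the $\T$-stabilization functor is fully faithful on the Artin subcategory \emph{in both variants}. This conflates the small-\'etale Tate twist with the Tate object $\T_S$ used to build the spectra. In $\bRigDM^{\eff}_{\et}(S,\Lambda)$ the paper proves (Proposition~\ref{prop:iso_muDM}) that $\T^{\tr}_S[-2]\cong\mu_{S,N}$ \emph{effectively}, so $\T^{\tr}_S$ is already $\otimes$-invertible in the whole effective category and stabilization is an equivalence; this is the content of Theorem~\ref{thm:embDM} and gives the ``moreover'' clause. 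In $\bRigDA^{\eff}_{\et}(S,\Lambda)$, however, one only has a \emph{morphism} $\T_S[-2]\to\mu_{S,N}$ that becomes invertible after $\T$-stabilization (Proposition~\ref{prop:iso_mu}), not before. So $\T_S$ is not known to be $\otimes$-invertible effectively, and the clean ``invert a $\otimes$-invertible object'' argument simply does not apply. The paper's Theorem~\ref{thm:embedding_DA} has to prove that $\R\Ev\circ\L\Sus\circ\L\iota^*\cong\L\iota^*$ by hand: it tests against compact generators, unfolds the stable $\Hom$ as a homotopy colimit over twists, swaps $\R\Hom$ with $\hocolim$ (Lemma~\ref{lemma:hom_limind}), and uses Poincar\'e duality on the \'etale side to see that the colimit is constant. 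This step cannot be bypassed by the monoidal observation you propose.

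\textbf{Essential surjectivity: the projective base change is the crux.} Your sketch correctly proposes to work locally, compare with algebraic models via analytification and formal models, and use the six operations. But even after shrinking, a smooth affinoid $Y$ is only an \emph{admissible open} inside the analytification $Y'^{\an}$ of a smooth affine $R$-scheme (Proposition~\ref{prop:immersion}), not itself the analytification of anything algebraic; one cannot simply write $\Lambda_S(Y)\simeq\L\An^*(\Lambda_{\Spec A}(X))$. The paper's proof instead: (a) uses formal models and the equivalence $\bFormDA_{\et}(\sZ,\La)\cong\bD((\sZ_\sigma)_{\et},\La)$ — supplied by algebraic rigidity in its relative form — to see that $\Lambda_{Z^{\an}}(Y)$ is Artin inside a projective compactification $Z$ of $Y'$; (b) expresses $\Lambda_S(Y)$ as $p^{\an}_*(\iota^*(N\otimes M))$ for a suitable Artin object $N\otimes M$, using $f'^{\an}_!\cong p^{\an}_*j'^{\an}_\sharp$; and (c) crucially needs Proposition~\ref{lemma:projective_base_change}, stating that $\iota^*\circ p_*\Rightarrow p_*\circ\iota^*$ is invertible for projective $p$, to conclude that $p^{\an}_*$ preserves Artin motives. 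Without this proposition the final step breaks, so your sketch needs to be completed by proving this compatibility (the paper derives it from the projective base change both on the motivic and small-\'etale sides). You also omit the reduction from $\bRigDM_{\et}$ to $\bRigDA_{\et}$ — the paper observes that $\L a_{\tr}$ sends compact generators to compact generators, so proving essential surjectivity for $\bRigDA_{\et}$ suffices — which is cleaner than running the argument separately for $\bRigDM^{\eff}_{\et}$.
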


 We will divide the proof in two steps: we first show that the two functors are fully faithful, and we then prove that they are essentially surjective.

\subsection{The Embedding Theorems}\label{sub:emb}
We now show that the functors above are fully faithful. As in the algebraic proofs (see \cite{ayoub-etale} and \cite{CD-etale}) one of the key points is the  equivalence between the Tate twisting motive and the sheaf of $N$-th roots of unity.

\begin{prop} \label{prop:iso_mu}Let $\La$ be a $N$-torsion ring, where $N$ is invertible in the residue field of $K$. 
There is a natural morphism 
\[ %
\Lambda_S(1) \cong \T_S[-2] {\to} \mu_{S, N}%
\] 
in $\bRigDA^{\eff}_{\et}(S,\Lambda)$ where the motive on the right is the one induced by the locally constant sheaf of $N$-th roots of unity. It becomes invertible in $\bRigDA_{\et}(S,\Lambda)$ after applying $\L\Sus$.%
\end{prop}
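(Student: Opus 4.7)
The plan is to construct the morphism via Kummer theory and then invoke a rigid version of $\A^1$-contractibility to identify $\T_S$ with a shift of the reduced motive of $\G_{m,S}^{\an}$. Since $N$ is invertible in the residue field of $K$, the $N$-th power map on $\G_{m,S}^{\an}$ is finite \'etale and the Kummer short exact sequence
\[ 0 \to \mu_{S,N} \to \G_{m,S}^{\an} \xrightarrow{[N]} \G_{m,S}^{\an} \to 0 \]
is exact as a sequence of \'etale sheaves of abelian groups on $\Sm_S$. Tensoring with $\Lambda$ over $\Z$ makes the endomorphism $[N]$ null-homotopic, so the resulting distinguished triangle in $\bD((\Sm_S)_{\et}, \Lambda)$ produces a connecting morphism $\G_{m,S}^{\an} \otimes^L \Lambda \to \mu_{S,N}[1]$. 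Precomposing with the natural summation morphism $\Lambda[\G_{m,S}^{\an}] \to \G_{m,S}^{\an} \otimes \Lambda$ and restricting to the reduced factor (that is, modding out by the unit section $\Lambda_S \hookrightarrow \Lambda[\G_{m,S}^{\an}]$) yields a map $\widetilde{\Lambda}[\G_{m,S}^{\an}] \to \mu_{S,N}[1]$ which, via \textbf{Proposition \ref{prop:emb_eff}}, lives naturally in $\bRigDA^{\eff}_{\et}(S,\Lambda)$.

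Next, I would show that $\T_S \simeq \widetilde{\Lambda}[\G_{m,S}^{\an}][1]$ in $\bRigDA^{\eff}_{\et}(S,\Lambda)$. By definition $\T_S = \Cofib(\Lambda[\G_{m,S}^{\an}] \to \Lambda[\A^{1,\an}_S])$, so it suffices to check $\Lambda[\A^{1,\an}_S] \simeq \Lambda_S$. The space $\A^{1,\an}_S$ is the filtered union of closed discs $\B^n_S$ of radius $|\pi|^{-n}$ for a nontrivial $\pi \in K^\times$; each $\B^n_S$ is isomorphic to $\B^1_S$ by rescaling and hence $\B^1$-contractible, while any quasi-compact smooth $S$-analytic test space maps into some $\B^n_S$. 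So after \'etale sheafification the motive of $\A^{1,\an}_S$ is the filtered colimit of the motives of the $\B^n_S$, each equivalent to $\Lambda_S$. This yields the desired morphism $\Lambda_S(1) = \T_S[-2] \to \mu_{S,N}$.

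For the invertibility after $\L\Sus$: the Tate motive $\T_S$ is $\otimes$-invertible in $\bRigDA_{\et}(S, \Lambda)$ by construction, so I must check that the stabilized map is an equivalence. The split Kummer triangle (valid since $[N]$ is null on $\Lambda$-coefficients) identifies $\mu_{S,N} \otimes^L \Lambda$ with a canonical direct summand of $\G_{m,S}^{\an} \otimes^L \Lambda$ in the derived \'etale category. On the motivic side, once the Tate twist becomes invertible, the reduced motive $\widetilde{\Lambda}[\G_{m,S}^{\an}]$ canonically contains a copy of $\Lambda_S(1)[1]$, and one verifies that the two decompositions match through the Kummer class. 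Alternatively, for affinoid $S = \Sp A$ one can reduce to the classical algebraic Rigidity statement $\T_{\Spec A}[-2] \simeq \mu_{N, \Spec A}$ through analytification, using \textbf{Proposition \ref{prop:recall}(\ref{Tcomp})} to identify $\L\An^*(\T_{\Spec A})$ with $\T_{\Sp A}$, and then globalize to general normal $S$ by \'etale descent.

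I expect the main obstacle to be the invertibility step: establishing that the summation map, together with the Kummer splitting, yields an equivalence after $\T$-stabilization. This is essentially the rigid incarnation of classical algebraic rigidity for $\Lambda(1)$. The secondary technical point is justifying the filtered-colimit description $\Lambda[\A^{1,\an}_S] \simeq \Lambda_S$ at the level of \'etale sheaves of $\Lambda$-modules (rather than just representable presheaves), which relies on the compactness of smooth affinoid test objects and the compact generation afforded by \textbf{Proposition \ref{prop:cptgen}}.
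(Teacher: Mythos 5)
Your construction of the morphism matches the paper's: both use the Kummer sequence to identify $\mathcal{O}^\times\otimes^{\L}_{\Z}\Lambda$ with $\mu_{S,N}[1]$ and then feed the summation map $\Lambda_S(\G_{m,S}^{\an})\to\mathcal{O}^\times\otimes\Lambda$ (factoring through the reduced motive $\T_S[-1]$) into that identification. Your extra justification that $\Lambda[\A^{1,\an}_S]\simeq\Lambda_S$ via the filtered colimit of bounded discs is correct and is implicit in the paper's terse phrase ``factors over $T_S[-1]$.''

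For the invertibility step, the paper does exactly your ``alternative'': reduce to $S$ affinoid by a rigid analogue of \cite[Lemma 3.4]{ayoub-etale}, note that Tate algebras are excellent so \cite[Proposition 4.10]{ayoub-etale} applies over $\Spec A$, and push the resulting algebraic isomorphism $\mu_{\Spec A,N}\cong\Lambda_{\Spec A}(1)$ through $\L\An^*$ using Proposition~\ref{prop:recall}\eqref{Tcomp}. Your primary sketch (``split Kummer triangle'' plus matching two decompositions through the Kummer class) would not get you there cleanly: in $\bD((\Sm_S)_{\et},\Lambda)$ the derived tensor $\mathcal{O}^\times\otimes^{\L}_{\Z}\Lambda$ is isomorphic to $\mu_{S,N}[1]$ outright rather than containing it as a direct summand, while on the motivic side the assertion that $\widetilde{\Lambda}[\G^{\an}_{m,S}]$ ``contains a copy of $\Lambda_S(1)[1]$'' is essentially the statement you are trying to prove, so this route is circular unless you invoke algebraic Rigidity, which is precisely what the analytification argument does. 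Since you supply that route explicitly, the proposal is sound.
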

\begin{proof}
	We first produce the natural morphism between the two motives. The presheaf of abelian groups $\cO^\times$ is represented by $\G_{m, S}^{\an}$.  There is an obvious map of presheaves
	$$\Lambda_S(\G_{m,S}^{\an})=\Hom(-,\G_{m,S}^{\an})\otimes\Lambda\ra\mathcal{O}^\times\otimes_{\Z}\Lambda$$ %
	 where the first tensor is the free $\Lambda$-module over the set, while the second is a base change over $\Z\ra\Lambda$. The induced map on the associated sheaves factors  over $T_S[-1]$.
	  On the other hand, we remark that by the Kummer exact sequence, we can write $\mu_{S, N}[1]\cong\mathcal{O}^\times\otimes_\Z\Lambda$ in $\bRigDA_{\et}^{\eff}(S,\La)$. By shifting on both sides, we therefore obtain a morphism $\T_S[-2] {\to} \mu_{S, N}$ in  $\bRigDA_{\et}^{\eff}(S,\Lambda)$  as wanted.

We now pass to the stable categories and we prove that this morphism becomes invertible. 
Using  (the obvious rigid analogue of) \cite[Lemma 3.4]{ayoub-etale} it suffices to prove the statement in the case when $S$ is affinoid, equal to $\Sp A$. %
We remark that Tate algebras are excellent (see \cite[Remarks 3.5.2]{FvDP}) and therefore (see the paragraph after \cite[Theorem 4.1]{ayoub-etale})  
one can apply 
 \cite[Proposition 4.10]{ayoub-etale} and claim that there is an isomorphism
\begin{equation} \label{eq:iso_mu}
\mu_{S, N} \cong \Lambda_{\Spec A}(1)
\end{equation}
in $\bDA_{\et}(\Spec A, \La)$ induced by the analogous natural morphism between the two motives. Applying the analytification functor $\L \An^*: \bDA_{\et}(\Spec A, \Lambda) \to \bRigDA_{\et}(S, \Lambda)$ to the isomorphism \eqref{eq:iso_mu} we obtain the statement (using Proposition  \ref{prop:recall}, \eqref{Tcomp}).
\end{proof}

There is a stronger statement for motives with transfers.

\begin{prop} \label{prop:iso_muDM}Let $\La$ be a $N$-torsion ring, where $N$ is invertible in the residue field of $K$. 
	There is a natural isomorphism 
	\[ %
	\Lambda_S^{\tr}(1) \cong \T_S^{\tr}[-2] {\to} \mu_{S, N}%
	\] 
	in $\bRigDM^{\eff}_{\et}(S,\Lambda)$ where the motive on the right is the one induced by the locally constant sheaf of $N$-th roots of unity. %
\end{prop}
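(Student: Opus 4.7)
The plan is to repeat the construction of Proposition \ref{prop:iso_mu} in the transfers setting and then upgrade ``invertible after $\L\Sus$'' to ``already invertible at the effective level''. First, since on the site $\Cor_S$ the sheaf $\mathcal{O}^\times$ is still representable by $\G_{m,S}^{\an}$ (with transfers given by the norm along finite flat maps of normal varieties), the same formal manipulation as in Proposition \ref{prop:iso_mu} produces a canonical morphism
\[
\Lambda_S^{\tr}(1) \cong \T_S^{\tr}[-2] \longrightarrow \mu_{S,N}
\]
in $\bRigDM^{\eff}_{\et}(S,\Lambda)$ out of the Kummer short exact sequence $1 \to \mu_{N} \to \mathcal{O}^\times \xrightarrow{(-)^N} \mathcal{O}^\times \to 1$, whose validity on the small \'etale site of $S$ uses only that $N$ is invertible in the residue field.

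Next, I would reduce to the affinoid case $S=\Sp A$ via a local-to-global argument: by Proposition \ref{prop:cptgen}, the category $\bRigDM^{\eff}_{\et}(S,\Lambda)$ is compactly generated by motives of smooth affinoids, so it suffices to check that the cone of the map above vanishes after pulling back along every $\Sp B \to S$ with $B$ a smooth Tate algebra over $A$. Equivalently, it suffices to prove the statement when $S$ is itself smooth affinoid. At this point I would invoke the algebraic counterpart: over a normal (excellent) scheme $\Spec A$, the classical Suslin--Voevodsky computation (formulated with \'etale transfers in \cite{CD-etale}, or its more recent revisiting in the setup of \cite{ayoub-etale}) gives a natural isomorphism
\[
\Lambda^{\tr}_{\Spec A}(1) \cong \mu_{\Spec A, N}
\]
in $\bDM^{\eff}_{\et}(\Spec A,\Lambda)$, obtained from the very same Kummer-sequence map.

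The remaining step is to transport this isomorphism across the analytification functor. Applying the monoidal left Quillen functor
\[
\L\An^*\colon \bDM^{\eff}_{\et}(\Spec A, \Lambda) \longrightarrow \bRigDM^{\eff}_{\et}(S,\Lambda)
\]
from Proposition \ref{prop:recall} and using the identifications $\L\An^*(\T^{\tr}_{\Spec A}) \cong \T^{\tr}_S$ and $\L\An^*(\mu_{\Spec A,N}) \cong \mu_{S,N}$ (the latter because $\mu_{N}$ is locally constant \'etale and analytification is compatible with the inclusion of small \'etale topoi, an observation already used implicitly in Proposition \ref{prop:iso_mu}), the algebraic isomorphism is sent precisely to the rigid morphism constructed in the first paragraph, which is therefore an isomorphism as well.

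The main obstacle is the second step, namely verifying that the algebraic \emph{effective} identification $\Lambda^{\tr}(1) \cong \mu_N$ really holds for the generic smooth affinoid $A$ of Tate type. This relies on the fact that $\Spec A$ is excellent, so that the version of the Suslin--Voevodsky rigidity for $\bDM^{\eff}_{\et}$ proved in \cite{CD-etale} applies (an assumption we already made implicit use of in the proof of Proposition \ref{prop:iso_mu}). Once this ingredient is accepted, compatibility with analytification is formal from the universal properties and from the compatibilities listed in Proposition \ref{prop:recall}.
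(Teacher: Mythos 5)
Your proposal is correct and follows essentially the same route as the paper's proof: construct the Kummer map, reduce to $S$ affinoid, transport the known algebraic isomorphism from $\bDM^{\eff}_{\et}(\Spec A,\Lambda)$ (\cite[Proposition 3.2.3]{CD-etale}) through $\L\An^*$ using the compatibilities of Proposition \ref{prop:recall}. The only cosmetic difference is in the reduction step: you invoke compact generation and pullback to smooth affinoids over $S$, whereas the paper cites (the rigid analogue of) \cite[Proposition 3.2.8]{CD-etale}; these amount to the same local-to-global argument, and the remaining steps agree.
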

\begin{proof}
	The natural morphism between the two motives is deduced by the morphism of Proposition \ref{prop:iso_mu}.  In order to prove that it is invertible, 
	using  (the obvious rigid analogue of) \cite[Proposition 3.2.8]{CD-etale}, one can assume $S$ is affinoid, equal to $\Sp A$. %
	We remark that Tate algebras are Noetherian (see \cite[Remarks 3.5.2]{FvDP}).  
	One can then apply 
	\cite[Proposition 3.2.3]{CD-etale} and obtain  an isomorphism
	\begin{equation} \label{eq:iso_mu2}
	\mu_{S, N} \cong \Lambda^{\tr}_{\Spec A}(1)
	\end{equation}
	in $\bDM^{\eff}_{\et}(\Spec A, \La)$  induced by the analogous natural morphism. Applying the analytification functor $\L \An^*: \bDM^{\eff}_{\et}(\Spec A, \Lambda) \to \bRigDM^{\eff}_{\et}(S, \Lambda)$ to the isomorphism \eqref{eq:iso_mu2} we obtain the statement (using Proposition \ref{prop:recall}, \eqref{Tcomp}).
\end{proof}

From now on, we will adopt the usual notation for Tate twists in $\bD(S_{\et}, \Lambda)$, hence, for any object $K$ in it, we will write $K(n)$ for the object $K\otimes\mu_N^{\otimes n}$. The previous proposition can be rephrased by saying that $\L\iota^*\colon\bD(S_{\et}, \Lambda)\ra\bRigDA_{\et}(S,\La)$  and $\L\iota^*\colon\bD(S_{\et}, \Lambda)\ra\bRigDM^{\eff}_{\et}(S,\La)$ preserve the twists. 

For the case of motives with transfers, the previous proposition easily shows that the functor $\L\iota^*$ is fully faithful, and proves the last claim of Theorem \ref{thm:rigmain}.

\begin{thm}[Embedding Theorem for $\bRigDM$]\label{thm:embDM}Let $\La$ be a $N$-torsion ring, where $N$ is invertible in the residue field of $K$. 
The categories $\bRigDM_{\et}^{\eff}(S,\La)$ and $\bRigDM_{\et}(S,\La)$ are canonically equivalent. In particular, the functor$$
\L\iota^*\colon {\bD(S_{\et},\Lambda)}\ra{\bRigDM_{\et}(S,\Lambda)}
$$
is fully faithful.
\end{thm}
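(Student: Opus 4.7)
The plan is to deduce both statements from Proposition \ref{prop:iso_muDM} combined with Proposition \ref{prop:emb_eff}. The heart of the argument is showing that $\T_S^{\tr}$ is already $\otimes$-invertible in the \emph{effective} category $\bRigDM_{\et}^{\eff}(S,\La)$, so that the $\T_S^{\tr}$-stabilization procedure becomes trivial.

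First I would observe that the locally constant étale sheaf $\mu_{S,N}$ is a $\otimes$-invertible object of $\bD(S_{\et},\La)$, with quasi-inverse $\mu_{S,N}^{\otimes -1} = \R\mathcal{H}om(\mu_{S,N},\La)$: indeed $\mu_{S,N}$ is étale-locally isomorphic to $\La$, and invertibility can be checked locally. Since $\L\iota^*$ is a monoidal functor by Proposition \ref{prop:recall}, the image of $\mu_{S,N}$ is $\otimes$-invertible in $\bRigDM_{\et}^{\eff}(S,\La)$. By Proposition \ref{prop:iso_muDM} there is a natural isomorphism $\T_S^{\tr}[-2]\cong\mu_{S,N}$ already in $\bRigDM_{\et}^{\eff}(S,\La)$, and since shifts are always invertible in a triangulated category, this shows that $\T_S^{\tr}$ itself is $\otimes$-invertible in $\bRigDM_{\et}^{\eff}(S,\La)$.

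Next I would invoke the standard fact that whenever the stabilizing object is already invertible in a stable symmetric monoidal model category, the suspension functor to its stabilization is a Quillen equivalence; concretely, $\L\Sus\colon\bRigDM_{\et}^{\eff}(S,\La)\to\bRigDM_{\et}(S,\La)$ becomes an equivalence of monoidal triangulated categories. This is most transparent using the non-symmetric spectra model allowed by \cite[Théorème 4.3.79]{ayoub-th2} (as mentioned after Definition \ref{defn:etale_motives}): a spectrum $(E_n)$ with bonding maps $E_n\otimes\T_S^{\tr}\to E_{n+1}$ is an $\Omega$-spectrum precisely when those maps are weak equivalences, and when $\T_S^{\tr}$ is invertible every spectrum is equivalent to the constant one determined by $E_0$, so $\L\Sus$ and $\R\Ev$ form an equivalence. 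This gives the first assertion of the theorem.

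For the second assertion, I would simply compose: by Proposition \ref{prop:emb_eff} the functor $\L\iota^*\colon\bD(S_{\et},\La)\to\bRigDM_{\et}^{\eff}(S,\La)$ is fully faithful, and composing with the equivalence $\L\Sus$ just established produces the fully faithful functor $\L\iota^*\colon\bD(S_{\et},\La)\to\bRigDM_{\et}(S,\La)$ of the statement. I expect the only real subtlety to be the careful justification of step~two, i.e., the assertion that inverting a weak equivalence $\T_S^{\tr}\otimes(-)\to\Id$ in model-categorical stabilization produces an equivalence when an honest $\otimes$-inverse of $\T_S^{\tr}$ already exists; but this is a standard feature of the Hovey--Ayoub framework for symmetric spectra and presents no real obstacle here.
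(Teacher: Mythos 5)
Your proposal is correct and follows essentially the same route as the paper's proof: invertibility of $\mu_{S,N}$ in $\bD(S_{\et},\La)$ plus monoidality of $\L\iota^*$ yields invertibility of its image, Proposition \ref{prop:iso_muDM} transfers this to $\T_S^{\tr}$, the stabilization therefore becomes an equivalence, and full faithfulness follows by composing with Proposition \ref{prop:emb_eff}. The only difference is that you spell out the stabilization-becomes-an-equivalence step in more detail (via the non-symmetric spectra model from \cite[Th\'eor\`eme 4.3.79]{ayoub-th2}), whereas the paper leaves it implicit.
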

\begin{proof}
The sheaf $\mu_{S,N}$ is  invertible in ${\bD(S_{\et},\Lambda)}$ (\'etale locally, it is isomorphic to $\La$) and hence (as the functor $\L\iota^*$ is monoidal) the functor $(-) \otimes\mu_{S,N}$ is already invertible in  $\bRigDM_{\et}^{\eff}(S,\La)$. We conclude the first claim using Proposition \ref{prop:iso_muDM}. The second claim follows at once from Proposition \ref{prop:emb_eff}.
\end{proof}

The case of motives without transfers is more complicated. On the other hand,
the proof of the next theorem is the straightforward rigid analytic analogue of \cite[Corollaire 4.11]{ayoub-etale}. We reproduce here its proof for the convenience of the reader.%

\begin{thm}[Embedding Theorem for $\bRigDA$]\label{thm:embedding_DA}Let $\La$ be a $N$-torsion ring, where $N$ is invertible in the residue field of $K$. 
The functor 
\[
\L\iota_{}^*: \bD(S_{\et}, \Lambda) \to \bRigDA_{\et}(S, \Lambda)
\]
is fully faithful.
\end{thm}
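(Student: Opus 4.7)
The plan is to imitate Ayoub's argument \cite[Corollaire 4.11]{ayoub-etale}, using Proposition \ref{prop:iso_mu} to match Tate twists with $\mu_{N}$ and using the analytification functor to reduce to the algebraic case. Since $\L\iota^*$ is a left adjoint that preserves the compact generators of $\bD(S_{\et}, \La)$ (it sends $\La_U \mapsto \La_S(U)$ for $U \to S$ étale quasi-compact, and these are compact in $\bRigDA_{\et}(S, \La)$ by Proposition \ref{prop:cptgen}), its full faithfulness is equivalent to the unit $F \to \R\iota_*\L\iota^*F$ being an isomorphism at the generators $F = \La_U$ together with their Tate twists. Testing at $\La_V$ for $V \to S$ étale and quasi-compact, and using Proposition \ref{prop:iso_mu} to identify the stable motivic twist $\La_S(1)$ with $\mu_{S, N}$, the claim reduces to showing that the canonical comparison
\[
\Hom_{\bD(S_{\et}, \La)}(\La_V, \La_U(m)[n]) \to \Hom_{\bRigDA_{\et}(S, \La)}(\La_S(V), \La_S(U)(m)[n])
\]
is bijective for all $m, n \in \Z$; the source is, by étale adjunction in the small topos, the cohomology group $H^n_{\et}(U \times_S V, \mu_{N}^{\otimes m})$.

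To compute the target, I would localize on $S$ to assume $S = \Sp A$ is affinoid. Zariski-locally, any étale affinoid $U \to \Sp A$ admits an algebraic étale model $\tilde U \to \Spec A$ (by Elkik-type approximation) with $\tilde U^{\an} = U$, and similarly for $V$. Then Proposition \ref{prop:recall}(4) together with the algebraic Rigidity Theorem (\cite[Corollaire 4.11]{ayoub-etale}, already invoked in the proof of Proposition \ref{prop:iso_mu}) yields
\[
\Hom_{\bDA_{\et}(\Spec A, \La)}(\La(\tilde V), \La(\tilde U)(m)[n]) \cong H^n_{\et}(\tilde U \times_{\Spec A} \tilde V, \mu_{N}^{\otimes m}),
\]
while Huber's comparison theorem for torsion étale cohomology identifies the right-hand side with the rigid étale cohomology $H^n_{\et}(U \times_S V, \mu_{N}^{\otimes m})$. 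Since $\L\An^*$ sends the algebraic motives $\La(\tilde U), \La(\tilde V)$ to the rigid motives $\La_S(U), \La_S(V)$ and is compatible with Tate twists (Proposition \ref{prop:recall}, \eqref{Tcomp}), composing these isomorphisms expresses the rigid motivic Hom as the étale cohomology group appearing on the source side, giving the required bijection.

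The main obstacle is the coherence of these identifications: one must check that the composite of the algebraic Rigidity isomorphism, Huber's comparison, and the analytification actually recovers the natural map induced by $\L\iota^*$. This is a bookkeeping exercise requiring the compatibility of $\L\An^*$ with the units and counits of $\L\iota^* \dashv \R\iota_*$ on both sides, together with the multiplicativity of Huber's comparison with respect to $\mu_{N}^{\otimes m}$. Once this diagram is seen to commute, full faithfulness in the rigid setting follows formally from the algebraic case together with Huber's small-site comparison, thereby establishing the Embedding Theorem for $\bRigDA$.
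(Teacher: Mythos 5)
Your high-level reduction — checking that $\L\iota^*$ is a bijection on $\Hom$-groups between compact generators — is sound, but the argument computing the \emph{target} $\Hom$-group is where a genuine gap appears. You identify the algebraic motivic $\Hom$ in $\bDA_{\et}(\Spec A, \La)$ with an \'etale cohomology group (algebraic Rigidity), you identify the source $\Hom$ in $\bD(S_{\et},\La)$ with an \'etale cohomology group, and you invoke Huber's comparison to match the two. What you never do is compute $\Hom_{\bRigDA_{\et}(S,\La)}(\La_S(V), \La_S(U)(m)[n])$ or show that the analytification map $\L\An^*$ induces a bijection onto it. Knowing that $\L\An^*$ sends $\La(\tilde U)$ to $\La_S(U)$ (and twists to twists) only gives a \emph{map} of $\Hom$-groups; it gives no information about its injectivity or surjectivity, and $\L\An^*$ is neither an equivalence nor known to be fully faithful. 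Your last paragraph frames the missing piece as a ``coherence/bookkeeping'' issue, but compatibility of the identifications cannot deliver a bijection on its own: the statement that $\L\An^*$ (equivalently, $\L\iota^*$) induces isomorphisms on these $\Hom$-groups is precisely the content of the theorem, and your argument assumes it rather than proving it. (A secondary inaccuracy: $\Hom_{\bD(S_{\et},\La)}(\La_V, \La_U(m)[n])$ equals $H^n(V_{\et}, (j_{U\times_S V \to V})_!\,\La\otimes\mu_N^{\otimes m})$ by adjunction, which differs from $H^n_{\et}(U\times_S V,\mu_N^{\otimes m})$ unless $U\times_S V\to V$ is finite; the same issue affects your algebraic computation.)

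The paper's actual proof does not go through any comparison with $\bDA_{\et}(\Spec A,\La)$. It takes the effective embedding (Proposition \ref{prop:emb_eff}) as the starting point, uses the stable isomorphism $\T\cong\mu_{S,N}(1)[2]$ of Proposition \ref{prop:iso_mu}, and then analyzes the stabilization directly: the stable $\Hom$ is computed as $\hocolim_n \R\Hom(\T^{\otimes n}, \L\iota^*(K)\otimes\T^{\otimes n})$, and the theorem reduces to showing this colimit stabilizes at $\L\iota^*K$. The essential input that makes this work is Poincar\'e duality for rigid analytic \'etale cohomology (Huber, Section 7.5), which is what supplies the $\Omega$-spectrum structure on the sequence $(\L\iota^*(K(n))[2n])_n$ and forces the colimit to be constant. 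That duality, not Huber's GAGA-type comparison, is the missing ingredient in your approach, and it cannot be sidestepped: without it there is no reason for the stabilization not to change the effective $\Hom$-groups.
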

\begin{proof}%
We divide the proof in several steps.

{\it Step 1}: 
The functor $\L\iota_{}^*$ is
obtained as the composition of the functor
\begin{equation} \label{i1}
\L\iota^*\colon{\bD(S_{\et},\Lambda)}\ra{\bRigDA^{\eff}_{\et}(S,\Lambda)}
\end{equation} 
and the left adjoint of the adjunction 
$$
\adj{\L \Sus}{\bRigDA^{\eff}_{\et}(S,\Lambda)}{\bRigDA_{\et}(S,\Lambda)}{\R\Ev}.
$$
As the functor \eqref{i1} is fully faithful (\cf Proposition \ref{prop:recall}), 
it is enough to show that $\R \Ev \circ \L \Sus \circ \L \iota^* \cong \L \iota^*$. Since $\bRigDA^{\eff}_{\et}(S,\Lambda)$ is compactly generated, it is enough to check that for each compact object $M \in \bRigDA^{\eff}_{\et}(S,\Lambda)$ one has that
\[ \Hom (M, \L \iota^* K) \cong \Hom(M, \R \Ev \circ \L \Sus \circ \L \iota^*(K))  \]
for all $K \in \bD(S_{\et},\Lambda)$. By \cite[Th\'eoreme 4.3.61]{ayoub-th2} we know that
\[\begin{aligned}
 \Hom(M, \R \Ev \circ \L \Sus \circ \L \iota^*(K))& \cong 
 \Hom( \L \Sus M, \L \Sus \circ \L \iota^*(K))\\& \cong \underset{n \in \N}\colim\: \Hom(M \otimes \T^{\otimes n}, \L \iota^*(K)\otimes \T^{\otimes n})\\& \cong \underset{n \in \N}\colim\: \Hom(M , \R \Hom (\T^{\otimes n}, \L \iota^*(K) \otimes \T^{\otimes n})).
 \end{aligned} \]

As $M$ is compact, it is enough to show that the map
\[ \L \iota^* K \ra \hocolim_{n \in \N} \R \Hom (\T^{\otimes n}, \L \iota^*(K) \otimes \T^{\otimes n}) \]
is invertible. We make a variable change and we swap $\R\Hom$ with homotopy colimits (see Lemma \ref{lemma:hom_limind}) and we rewrite the second term above as:
\begin{equation}\label{st1}
\begin{aligned}
\hocolim_{n \in \N} &\R \Hom (\T^{\otimes n}, \L \iota^*(K) \otimes \T^{\otimes n}) \\&\cong \hocolim_{n,m \in \N} \R \Hom (\T^{\otimes n+m}, \L \iota^*(K) \otimes \T^{\otimes n+m}) \\& \cong \hocolim_{n \in \N}\R \Hom(\T^{\otimes n}, \hocolim_{m \in \N}\R\Hom(T^{\otimes m},\L \iota^*(K)\otimes \T^{\otimes n+m}).
\end{aligned} 
\end{equation}

{\it Step 2}: We now prove that  $$\hocolim_{m \in \N}\R\Hom(T^{\otimes m},\L \iota^*(K)\otimes \T^{\otimes n+m})$$ appearing in \eqref{st1} is isomorphic to   $$\hocolim_{m \in \N}\R\Hom(T^{\otimes m},\L \iota^*(K(n)[2n]))\otimes \T^{\otimes m}).$$
By \cite[Th\'eoreme 4.3.61]{ayoub-th2} we know that we can compute morphisms between two spectra $(S_n)$  and $(Y_n)$ in the (derived) category of $\N$-sequences by taking level-wise the $\Hom$-groups $$\Hom(S_n,\hocolim_{k \in \N}\R\Hom(T^{\otimes k},Y_{k+n})).$$  In particular, an equivalence of spectra $(Y_n)$ and $(Y_n')$ induces  equivalences in the effective category 
\begin{equation}
\hocolim_{k \in \N}\R\Hom(T^{\otimes k},Y_{k+n})\cong\hocolim_{k \in \N}\R\Hom(T^{\otimes k},Y'_{k+n}).\label{speff}\end{equation}
 We deduce that the isomorphism of the claim follows from the equivalence of spectra $$\L \iota^*(K)\otimes \T^{\otimes n}\cong\L \iota^*(K(n)[2n]))$$
which is a consequence of Proposition \ref{prop:iso_mu}.

{\it Step 3:} We now prove that for any $m$ the object
$$
\hocolim_{n \in \N}\R\Hom(T^{\otimes n},\L \iota^*(K(n))[2n]\otimes T^{\otimes m})
$$
is canonically equivalent (in the effective category) to:
$$
\hocolim_{n \in \N}\R\Hom(T^{\otimes n},\L \iota^*(K(n+m))[2n+2m]).
$$Using again the line of reasoning of the previous step, and more specifically the equation \eqref{speff} with $k=0$, we see that it suffices to give to $L\colonequals(\L \iota^*(K(n))[2n])_{n\in\N}$ a structure of a spectrum, and prove the existence of a compatible equivalence 
$$L\otimes T^{\otimes m} \cong L\otimes\iota^*(K(m)[2m])$$
for any $m$. This second fact follows from Proposition \ref{prop:iso_mu} while for the first fact one can use the transition maps induced by the following equivalence in $\bRigDA_{\et}(S,\La)$ (Poincar\'e duality in \'etale cohomology, see \cite[Section 7.5]{Huber}):
$$
\L \iota^*(K(n))[2n]\cong\R\Hom(T,\L \iota^*(K(n+1))[2n+2])
$$
and eventually replacing $\L \iota^*(K(n))[2n]$ with a cofibrant-fibrant replacement in order to have genuine maps between complexes of presheaves.

{\it Step 4}: We now finish the proof. We take again \eqref{st1} and we replace using Step 2:
$$
\begin{aligned}
 \hocolim_{n \in \N} &\R\Hom(\T^{\otimes n}, \hocolim_{m \in \N}\R\Hom(T^{\otimes m},\L \iota^*(K)\otimes \T^{\otimes n+m})\\ \cong 
 & \hocolim_{n \in \N}\R \Hom(\T^{\otimes n}, \hocolim_{m \in \N}\R\Hom(T^{\otimes m},\L \iota^*(K(n)[2n]))\otimes \T^{\otimes m})).
 \end{aligned}
 $$
We now swap $\R\Hom$ with $\hocolim$ (using again Lemma \ref{lemma:hom_limind}) and Step 3 to obtain
$$
\begin{aligned}
 \hocolim_{n \in \N}& \R\Hom(\T^{\otimes n}, \hocolim_{m \in \N}\R\Hom(T^{\otimes m},\L \iota^*(K(n)[2n]))\otimes \T^{\otimes m}))
 \\
 \cong
& \hocolim_{m \in \N} \R\Hom(\T^{\otimes m}, \hocolim_{n \in \N}\R\Hom(T^{\otimes n},\L \iota^*(K(n)[2n]))\otimes \T^{\otimes m}))\\
 \cong
&  \hocolim_{m \in \N}\R \Hom(\T^{\otimes m}, \hocolim_{n \in \N}\R\Hom(T^{\otimes n},\L \iota^*(K(n+m))[2n+2m]))\\
  \cong
&    \hocolim_{n \in \N}\R \Hom(\T^{\otimes n}, \L \iota^*(K(n))[2n])
\end{aligned}
$$
which is in turn canonically isomorphic to $\L \iota^*K$ (by Poincar\'e duality).
\end{proof}

The following technical lemma was used in the previous proof.

\begin{lemma} \label{lemma:hom_limind}
	The functor
	\[ \R \Hom_{\bRigDA^{\eff}_{\et}(S,\La)} (\T_S^{\otimes n}, -) \]
	commutes with homotopy colimits. 
\end{lemma}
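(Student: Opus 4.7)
The plan is to deduce the statement from the compactness of $\T_S^{\otimes n}$ together with the fact that the compact objects of $\bRigDA^{\eff}_{\et}(S,\La)$ are closed under tensor product.

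First, I would observe that by Proposition \ref{prop:cptgen}, $\bRigDA^{\eff}_{\et}(S,\La)$ is compactly generated by shifts of the motives $\La_S(Y)$ with $Y$ smooth affinoid over $S$. Since for such $Y,Y'$ we have $\La_S(Y)\otimes\La_S(Y')\cong\La_S(Y\times_S Y')$ and the fibre product of smooth affinoid $S$-varieties is again smooth affinoid, the collection of compact generators is closed under tensor products. A standard argument (closure of the class of compact objects under retracts, shifts and cofibres) then shows that the tensor product of two compact objects is compact.

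Next, I would note that by definition $\T_S$ is the cofibre of a morphism between $\La_S(\G_{m,S}^{\an})$ and $\La_S(\A_S^{1,\an})$, both of which are compact; hence $\T_S$ is compact, and so is $\T_S^{\otimes n}$.

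Finally, to prove the commutation statement, it suffices — since the category is compactly generated — to test against compact objects $X$. By the tensor-internal hom adjunction,
\[
\Hom\bigl(X, \R\Hom(\T_S^{\otimes n}, \hocolim_i Y_i)\bigr) \cong \Hom\bigl(X\otimes \T_S^{\otimes n}, \hocolim_i Y_i\bigr),
\]
and similarly for each $Y_i$. Since $X\otimes \T_S^{\otimes n}$ is compact by the previous step, $\Hom(X\otimes \T_S^{\otimes n}, -)$ commutes with homotopy colimits, so the canonical map
\[
\hocolim_i \R\Hom(\T_S^{\otimes n}, Y_i) \longrightarrow \R\Hom(\T_S^{\otimes n}, \hocolim_i Y_i)
\]
induces an isomorphism after applying $\Hom(X,-)$ for every compact $X$, hence is an isomorphism.

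There is no real obstacle here; the only point requiring care is the stability of compactness under the tensor product, which is immediate from the explicit form of the generators provided by Proposition \ref{prop:cptgen}.
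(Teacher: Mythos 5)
Your overall strategy coincides with the paper's: reduce to showing that $\La_S(X)\otimes\T_S^{\otimes n}$ is compact for $X$ a smooth affinoid over $S$, then conclude via the $(-\otimes\T_S^{\otimes n}, \R\Hom(\T_S^{\otimes n},-))$ adjunction and compact generation. The difference is in how compactness of the relevant tensor product is established, and here your argument has a genuine gap.

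You assert that $\La_S(\G_{m,S}^{\an})$ and $\La_S(\A_S^{1,\an})$ are compact, treating this as an application of Proposition~\ref{prop:cptgen}. But that proposition only asserts compactness of $\La_S(Y)$ for $Y$ a smooth \emph{affinoid} over $S$, and neither $\G_{m,S}^{\an}$ nor $\A_S^{1,\an}$ is affinoid --- they are not even quasi-compact, being filtered increasing unions of annuli and of balls, respectively. So their compactness does not follow from the statement you cite and must be proved. For $\A_S^{1,\an}$ this is easy (it is $\B^1$-contractible, so its motive is $\La_S$), but for $\G_{m,S}^{\an}$ the needed fact is that it is $\B^1$-homotopy equivalent to the affinoid circle $\Spa A\langle T^{\pm}\rangle$, and this is exactly what the paper invokes, citing \cite[Page 54]{ayoub-rig}, so that $\T_S$ (and hence $X\otimes\T_S$) sits in a cofibre sequence of genuinely affinoid motives, to which Proposition~\ref{prop:cptgen} does apply. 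Once you supply that reference (or an equivalent argument that $\G_{m,S}^{\an}$ has the motive of a quasi-compact object), the rest of your proof --- closure of compacts under tensor, passage to $\T_S^{\otimes n}$, and the adjunction/compactness conclusion --- is correct and is essentially the paper's argument.
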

\begin{proof}
	As $\bRigDA^{\eff}_{\et}(S,\La)$ is compactly generated (see Proposition \ref{prop:cptgen})	it suffices to test that for any compact object $M$ attached to an affinoid variety $X=\Spa A$ smooth over $S$, the natural map $$
	\xymatrix@R=1em{
	\Hom(M\otimes T^{\otimes n},\hocolim U_i)\ar[r]&\Hom(M,\hocolim \R\Hom(T^{\otimes n},U_i))\ar[d]^{\sim}\\&\colim\Hom(M\otimes T^{\otimes n},U_i)
}$$ is invertible. It suffices then to prove that $X\otimes T$ is also compact. In order to do this, we can prove (see \cite[Page 54]{ayoub-rig}) that the motive attached to $\Spa A\langle T^{\pm}\rangle$ is compact, which follows again from Proposition \ref{prop:cptgen}.
\end{proof}

We pin down the following consequence of Theorems \ref{thm:embDM} and \ref{thm:embedding_DA}.

\begin{cor}Under the hypotheses of Theorem \ref{thm:rigmain}, 
the subcategory of rigid analytic Artin motives in  $\bRigDA_{\et}(S,\La)$ [resp. $\bRigDM_{\et}(S,\La)$] (see Definition \ref{rmk:functors_LIDA_LIDM}) is equivalent to the category $\bD(S_{\et},\La)$.\qed
\end{cor}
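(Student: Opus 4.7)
The plan is to deduce this corollary almost formally from the two embedding theorems (Theorems \ref{thm:embDM} and \ref{thm:embedding_DA}). By definition, the subcategory of rigid analytic Artin motives is the smallest full triangulated subcategory of $\bRigDA_{\et}(S,\La)$ (resp. $\bRigDM_{\et}(S,\La)$) closed under small sums and containing the essential image of $\L\iota^*$. So the only issue is to identify this subcategory with the essential image of $\L\iota^*$ itself and then invoke fully faithfulness.

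First I would observe that $\L\iota^*$ is a left Quillen functor, hence a left adjoint at the level of (triangulated) homotopy categories, so it commutes with arbitrary small homotopy colimits, and in particular with small direct sums and with cones/shifts. This means its essential image is automatically a full triangulated subcategory of $\bRigDA_{\et}(S,\La)$ (resp. $\bRigDM_{\et}(S,\La)$) that is closed under small sums. By minimality, the essential image coincides with the subcategory of Artin motives.

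Next I would apply the Embedding Theorems (\ref{thm:embDM} and \ref{thm:embedding_DA}): $\L\iota^*$ is fully faithful, so it induces an equivalence between its source $\bD(S_{\et},\La)$ and its essential image. Combining this with the previous step yields the claimed equivalence between $\bD(S_{\et},\La)$ and the subcategory of Artin motives, in both the non-transfer and transfer cases.

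There is essentially no obstacle here: the content has already been absorbed into Theorems \ref{thm:embDM} and \ref{thm:embedding_DA}, and the remaining argument is formal, requiring only that $\L\iota^*$ preserves sums and triangles (which follows from it being a left Quillen triangulated functor) together with the definition of the Artin subcategory as the smallest such subcategory containing the image of $\L\iota^*$.
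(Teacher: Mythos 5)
Your proposal is correct and matches the paper's intended (essentially immediate) argument: the corollary is marked with a \verb|\qed| precisely because it follows formally from the Embedding Theorems in the way you describe. One small point of attribution, though: you justify the closure of the essential image of $\L\iota^*$ under cones by appealing to $\L\iota^*$ being a left Quillen / triangulated functor, but that alone is not enough. Any exact functor $F$ satisfies $F(\mathrm{cone}(f)) \cong \mathrm{cone}(F(f))$, yet the essential image of an exact functor is in general \emph{not} a triangulated subcategory, because a morphism $F(A)\to F(B)$ in the target need not be of the form $F(g)$ for any $g\colon A\to B$. What is actually needed to close the essential image under cones is \emph{fullness} of $\L\iota^*$, which is exactly what Theorems \ref{thm:embDM} and \ref{thm:embedding_DA} supply. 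Since you invoke those theorems in the next step anyway, all the ingredients are present in your write-up; the logically clean order is: (1) by the Embedding Theorems $\L\iota^*$ is fully faithful; (2) being a fully faithful exact left adjoint, its essential image is a full triangulated subcategory closed under small sums; (3) therefore the Artin subcategory, defined as the smallest such subcategory containing the essential image, coincides with the essential image, and $\L\iota^*$ restricts to the desired equivalence.
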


By abuse of notation, under the hypotheses of the previous Corollary, we will refer to rigid analytic Artin motives as well as to the objects of $\bD(S_{\et},\La)$ simply as \emph{Artin motives}.

\subsection{The proof of Rigidity}\label{sub:ful}

We can finally achieve the proof of Theorem \ref{thm:rigmain}. 
To this aim, we will use a (limited) version of the six functor formalism for rigid analytic motives, which is already present in \cite{ayoub-rig}. We recall here the main properties that we will use.

\begin{rmk}
For the sake of readability, we will drop $\L$ and $\R$ for derived functors, whenever the context allows it. In particular, we will write $\iota^*$ for the functors introduced in Definition \ref{rmk:functors_LIDA_LIDM}.
\end{rmk}

\begin{thm} \label{thm:6f1}
	Fix an affinoid algebra $A$. Let $S$ be $\Spa A$. 
	\begin{enumerate}
		\item\label{**} Any morphism $f\colon Y\ra Z$ between rigid analytic varieties over $S$ induces a (Quillen) adjunction
		$$
		f^*\colon \bRigDA_{\et}(Z,\La)\rightleftarrows\bRigDA_{\et}(Y,\La)\colon f_*
		$$
		such that $f^*\Lambda_Z(U)=\Lambda_Y(U\times_ZY)$ for any $U\in\Sm_Z$.
		\item\label{sharp} If the morphism $f\colon Y\ra Z$  is smooth, then $f^*$ has left adjoint 
			$$
		f_\sharp\colon \bRigDA_{\et}(Y,\La)\rightleftarrows\bRigDA_{\et}(Z,\La)\colon f^*
		$$
			such that $f_\sharp\Lambda_Y(U)=\Lambda_Z(U)$ for any $U\in\Sm_Y$.
				\item\label{local} If $i\colon Z\ra S$ is a Zariski closed immersion of varieties over $S$ with open complement $j\colon U\ra S$, then the pair $(i^*,j^*)$ is conservative. More precisely, there is an exact triangle
			$$
			j_\sharp j^*\ra \id\ra i_*i^*\ra [1]
			$$
			and canonical equivalences $j^*i_*\cong0$, $i^*i_*\cong\id$.
			\item\label{jos} The functor from quasi-projective schemes over $\Spec A$ to DG-categories defined as follows $Y\mapsto \bRigDA_{\et}(Y^{\an},\La)$ has the structure of a stable homotopic  $2$-functor in the sense of \cite[D\'efinition 1.4.1]{ayoub-th1}. In particular, any morphism $f\colon Y\ra Z$ between quasi-projective schemes over $\Spec A$ induces an adjoint pair
			 $$f_!^{\an}\colon \bRigDA_{\et}(Y^{\an},\La)\rightleftarrows\bRigDA_{\et}(Z^{\an},\La)\colon f^{\an!}$$
			 with the following properties.
				\begin{enumerate}[(i)]
				\item If $f\colon W\ra Y$ is smooth of relative dimension $d$ then, locally on $W$, we have $f_!^{\an}(-)\cong f^{\an}_\sharp(-\otimes\Lambda(-d)[-2d])$. 
				\item\label{projbc} There is a natural transformation $f^{\an}_!\Rightarrow f_*^{\an}$ which is invertible if $f$ is proper.%
			\end{enumerate}
	\end{enumerate}
	\end{thm}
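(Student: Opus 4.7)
The theorem collects structural properties that are either formal consequences of the model-categorical construction of $\bRigDA_{\et}$ or adaptations of results of Ayoub. My plan is to reduce as much as possible to \cite{ayoub-rig, ayoub-th1, ayoub-th2}, using analytification as the bridge between the algebraic and rigid worlds for part (4).

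Parts (1) and (2) are essentially formal. For (1), the morphism $f\colon Y\to Z$ induces a continuous base change functor $\Sm_Z\to\Sm_Y$ compatible with the \'etale topology and with $\B^1$-homotopies, hence yields a Quillen adjunction at the level of complexes of presheaves that descends to the localized and stabilized motivic categories; the formula $f^*\La_Z(U)=\La_Y(U\times_Z Y)$ holds already on representables. For (2), when $f$ is smooth, post-composition with $f$ gives a functor $\Sm_Y\to\Sm_Z$ whose left Kan extension provides the Quillen left adjoint $f_\sharp$, and the formula on representables is tautological.

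For (3), I would adapt the localization argument already present in \cite{ayoub-rig} to the stable \'etale setting: the equivalences $j^*i_*\cong 0$ and $i^*i_*\cong\id$ follow from $i_*$ being supported on $Z$, and the localization triangle is then obtained by checking its image under the conservative pair $(i^*,j^*)$. Passing to the stable category is harmless because $\T$ is invertible and these identities are compatible with tensoring by an invertible object.

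For (4), the algebraic categories $\bDA_{\et}(Y,\La)$ for $Y$ quasi-projective over $\Spec A$ carry the structure of a stable homotopic 2-functor by \cite{ayoub-th1, ayoub-th2}, providing the $(f_!, f^!)$ adjunction, relative purity (i), and the natural transformation $f_!\Rightarrow f_*$ invertible for proper $f$. One transfers this structure via the analytification functor of Proposition \ref{prop:recall}, checking that $\An^*$ commutes with $f^*$, $f_\sharp$, and the localization sequence---all formal from compatibility of analytification with fiber products, smooth morphisms, and closed immersions. The main obstacle is the compatibility of analytification with $f_!$ and in particular the equivalence $f_!^{\an}\cong f_*^{\an}$ for proper $f$; the key inputs are rigid analytic GAGA for proper morphisms (to commute $\An^*$ with $f_*$) together with Ayoub's effective treatment in \cite{ayoub-rig}, and the passage to the stable setting is harmless since $f_*$ commutes with $\T$-twisting for proper $f$ by the projection formula.
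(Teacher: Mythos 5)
Your treatment of parts (1), (2) and (3) agrees with the paper, which simply cites Ayoub's proofs in \cite{ayoub-rig} (Proposition 1.4.9 and Corollaire 1.4.28) for the adjunctions and for localization. One presentational caveat on (3): you cannot first ``use the conservative pair $(i^*,j^*)$'' to establish the localization triangle, because the conservativity of that pair is itself part of what must be proved and is established \emph{together with} the triangle in Ayoub's argument, not assumed beforehand. Since you ultimately defer to Ayoub's argument this does not invalidate the plan, but as written the outline is circular.

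Part (4) contains a genuine gap. The claim is that $Y\mapsto\bRigDA_{\et}(Y^{\an},\La)$ on quasi-projective $A$-schemes is a stable homotopic $2$-functor; assertions (i) and (ii) are then automatic from the abstract machinery of \cite[Scholie 1.4.2]{ayoub-th1}. You propose instead to ``transfer'' this structure from the algebraic $2$-functor $Y\mapsto\bDA_{\et}(Y,\La)$ along $\An^*$ by checking that $\An^*$ commutes with $f^*$, $f_\sharp$ and localization. But the axioms of a stable homotopic $2$-functor are intrinsic properties of the target categories $\bRigDA_{\et}(Y^{\an},\La)$ and must hold for \emph{all} objects, whereas $\An^*$ is far from essentially surjective; compatibility of $\An^*$ with the operations therefore cannot verify the axioms on the rigid side. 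The correct route --- and the one the paper follows by citing \cite[Th\'eor\`eme 1.4.33]{ayoub-rig} --- is to verify the axioms directly for $\bRigDA_{\et}$: parts (1)--(3) of the theorem supply the adjunctions and the localization axiom, and the remaining axioms (monoidality, $\B^1$-invariance, $\T$-stability, smooth base change, projection formula) hold by construction or are checked on representable generators; one then applies Scholie 1.4.2. Relatedly, you describe $f_!^{\an}\cong f_*^{\an}$ for proper $f$ as ``the main obstacle,'' reaching for rigid GAGA, but this isomorphism is precisely conclusion (ii), produced for free by the scholium once the axioms hold --- it is an output of the construction, not an input to it.
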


\begin{proof}
See (the \'etale versions of) \cite[Proposition 1.4.9, Corollaire 1.4.28, Th\'eor\`eme 1.4.33]{ayoub-rig} and \cite[Scholie 1.4.2]{ayoub-th1}.
\end{proof}

\begin{rmk}According to the structure of a stable homotopic  $2$-functor in the sense of \cite[D\'efinition 1.4.1]{ayoub-th1},  any morphism $f\colon Y\ra Z$ between quasi-projective schemes over $\Spec A$ also induces an adjoint pair
	$$f^{\an*}\colon \bRigDA_{\et}(Z^{\an},\La)\rightleftarrows\bRigDA_{\et}(Y^{\an},\La)\colon f_{*}^{\an}.$$
	These functors coincide with those induced by the map of analytic varieties $f^{\an}$ according to Theorem \ref{thm:6f1}\eqref{**}. We can therefore use the notation $(f^{\an*},f^{\an}_*)$ unambiguously. %
\end{rmk}

On the other hand we point out that the full six functor formalism for rigid analytic varieties is not yet available in the literature (progress in this direction is being done by Ayoub, Gallauer and the second named author).  For this paper the limited version stated above will be enough. More precisely, we will need the following corollary:

\begin{cor} \label{cor:proj_base_change}
	Let $f\colon W\ra S$ be a smooth map of affinoid rigid analytic varieties and let $p: P  \to S$ be a composition of a Zariski closed embedding and the canonical projection $\P^{N, \an}_S \ra S$ for some $N\in\N$. Consider the following cartesian diagram
	$$
	\xymatrix{
		P'\ar[r]^{p'}\ar[d]_{f'}&W\ar[d]^{f}\\
		P\ar[r]^{p}&S 
	}
	$$
	The natural transformation
	\[ f^*p_*\Rightarrow p'_*f'^* \]
	is invertible.	
\end{cor}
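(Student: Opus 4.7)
The plan is to factor $p$ as $p = \pi \circ i$, where $i\colon P \hookrightarrow \P^{N,\an}_S$ is the given Zariski closed immersion and $\pi\colon \P^{N,\an}_S \to S$ is the canonical projection. Pulling this factorization back along $f$ splits the cartesian square of the corollary into two cartesian sub-squares, one for each factor. Writing $\tilde{f}\colon \P^{N,\an}_W \to \P^{N,\an}_S$ for the base change of $f$ along $\pi$, $\pi'\colon \P^{N,\an}_W \to W$ for the canonical projection, and $i'\colon P' \hookrightarrow \P^{N,\an}_W$ for the base change of $i$, it then suffices to verify the base change formula separately for $\pi$ and for $i$: the composite
\[
f^* p_* \;=\; f^* \pi_* i_* \;\cong\; \pi'_* \tilde{f}^* i_* \;\cong\; \pi'_* i'_* f'^* \;=\; p'_* f'^*
\]
is then the desired isomorphism.

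For the projection $\pi$, the key point is that $\pi$ is smooth, so by property (2) of Theorem \ref{thm:6f1} it admits a left adjoint $\pi_\sharp$ to $\pi^*$. Applying smooth base change for the smooth morphism $f$ yields the isomorphism $\pi^* f_\sharp \cong \tilde{f}_\sharp \pi'^*$; passing to right adjoints gives $f^* \pi_* \cong \pi'_* \tilde{f}^*$. The instance of smooth base change invoked here is not stated explicitly in Theorem \ref{thm:6f1}, but follows from the compatibility of the formula $f_\sharp \Lambda_W(U) = \Lambda_S(U)$ recalled in (2) with fibre products.

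For the closed immersion $i$, let $j\colon V \hookrightarrow \P^{N,\an}_S$ be its open complement and $j'\colon V' \hookrightarrow \P^{N,\an}_W$ the open complement of $i'$. Applying $\tilde{f}^*$ to the localization triangle from property (3),
\[
j_\sharp j^* \;\to\; \id \;\to\; i_* i^* \;\to\; [1],
\]
and invoking smooth base change for the open immersion $j$ against $\tilde{f}$ — namely $\tilde{f}^* j_\sharp \cong j'_\sharp (\tilde{f}|_V)^*$ — together with the analogous localization triangle on $\P^{N,\an}_W$, one deduces by comparison of cofibres that $\tilde{f}^* i_* i^* \cong i'_* (i')^* \tilde{f}^*$; evaluating on objects of the form $i_* N$ and using $i^* i_* \cong \id$ from property (3) gives the required $\tilde{f}^* i_* \cong i'_* f'^*$. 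The main technical subtlety throughout is the validity of smooth base change for rigid smooth morphisms, which, while not emphasized in Theorem \ref{thm:6f1}, is standard in the framework of \cite{ayoub-rig} and reflects the formal compatibility of $f_\sharp$ with base change.
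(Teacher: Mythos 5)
Your proposal is essentially correct and follows the same strategy as the paper: factor $p$ as a Zariski closed immersion $i$ followed by the projection $\pi = \P^{N,\an}_S\to S$, and treat the two pieces separately. The details diverge in the projection case. The paper invokes relative purity (Theorem \ref{thm:6f1}(4)) to write $\pi_*\cong\pi_\sharp(-\otimes\Lambda(-N)[-2N])$ and then uses the commutation of $\pi_\sharp$ (the smooth projection) with $f^*$; you instead use the commutation of $f_\sharp$ (the given smooth map) with $\pi^*$ and then pass to mates/right adjoints. Both routes lean on an instance of the exchange $\mathrm{Ex}^*_\sharp$ involving the rigid (non-algebraic) smooth map $f$, which is not literally among the items recorded in Theorem \ref{thm:6f1}; you are right to flag this, and it is indeed a formal consequence of the explicit descriptions of $f_\sharp$ and $\pi^*$ on representable objects. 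Two smaller remarks. First, when you ``pass to right adjoints'' you should note (or cite) that the base-change transformation $f^*\pi_*\Rightarrow\pi'_*\tilde f^*$ of the statement is precisely the mate of $\mathrm{Ex}^*_\sharp\colon\tilde f_\sharp\pi'^*\to\pi^* f_\sharp$; this is standard in the calculus of exchange morphisms but is what makes the adjunction step legitimate. Second, and more interestingly, your argument for the projection case nowhere uses that $\pi$ is smooth — it applies verbatim to any morphism $p$ — so it already establishes the full corollary, making your separate treatment of the closed immersion redundant. (The paper avoids this apparent shortcut precisely because it uses the purity formula, which genuinely requires $\pi$ smooth and proper.) Your treatment of the closed-immersion case via cofibre comparison in the localization triangle is correct and is a minor variant of the paper's argument, which instead applies the conservative pair $(i'^*,j'^*)$ directly; the latter is a bit leaner as it does not require a further smooth base change for the open complement $j$.
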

\begin{proof}
	It suffices to consider separately the case when $p$ is a Zariski closed immersion and when it is the projection $\P^N_S\ra S$ over an affinoid variety. In the latter case, we have (see Theorem \ref{thm:6f1} %
	$p_*\cong \tilde{p}^{\an}_!\cong p_\sharp(-\otimes\La(-N)[-2N])$ where 
	 $\tilde{p}$ is the projection of schemes $\P^{N}_{\tilde{S}}\ra\tilde{S}$ and where $\tilde{S}$ is $\Spec\mathcal{O}(S)$. The commutation between this functor and $f^*$ follows from the commutation of $p_\sharp$ and $f^*$ as shown below:
	$$
	{p'_*}^{} f'^*M\cong p'_\sharp(f'^*M(-N)[-2N])\cong f^*p_\sharp(M(-N)[-2N])\cong f^*p_*^{}M.
	$$

	We now suppose that $p$ is a Zariski closed embedding with complement $j\colon U\ra S$. By means of \eqref{local} of Theorem \ref{thm:6f1}  it suffices to show that the two natural transformations
	\[ p'^*f^*p_*\Rightarrow p'^*p'_*f'^*\quad j'^*f^*p_*\Rightarrow j'^*p'_*f'^* \]
	are invertible, where $j': U' \to W$ is the open complement of $P'$. Using the commutation of $f^*$ with $p'^*$ and $j'^*$ and the equivalences $p^*p_*\cong\id$, $p'^*p'_*\cong\id$, $j^*p_*\cong0$ and  $j'^*p'_*\cong 0$ the result is immediate.
\end{proof}

The following result is  an adaptation  of \cite[Proposition 4.4.3]{CD-etale} to the rigid setting and follows formally from the previous versions of the proper base change. We recall that we write simply $\iota^*$ for the derived functor that was previously denoted by $\L\iota^*$.

\begin{prop} \label{lemma:projective_base_change}
	Let $S$ be an affinoid rigid analytic variety and let $p: P \to S$ be a map which is a composition of a Zariski closed immersion and the canonical projection $\P^{N,\an}_S\ra S$. The  natural transformation
	\[  \iota^* \circ p_*\Rightarrow p_* \circ \iota^* \]
	is invertible.
\end{prop}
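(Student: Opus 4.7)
The plan is to evaluate the exchange transformation on a compact generating family. Both of the functors involved are exact and preserve small direct sums: indeed $p_* = p_!$ since $p$ is proper (Theorem~\ref{thm:6f1}(4)(ii)), while $\iota^*$ is a left adjoint. By Proposition~\ref{prop:cptgen} together with the projection formula (which lets Tate twists be pulled past $p_*$), it therefore suffices to show that the natural transformation becomes an isomorphism after applying $\Hom(\Lambda_S(W), -)$, where $f\colon W \to S$ ranges over smooth affinoid morphisms.

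Form the cartesian square
$$
\xymatrix{
W' \ar[r]^{p'}\ar[d]_{f'} & W\ar[d]^f\\
P\ar[r]^p & S
}
$$
in which $p'$ is again a composition of a Zariski closed immersion and a projection from projective space over $W$. For the source of the exchange, using the adjunction $f_\sharp \dashv f^*$, the compatibility $f^*\iota^* = \iota^* f^*$ of left adjoints coming from morphisms of sites, the classical proper base change for \'etale sheaves on rigid analytic spaces (available in Huber's adic framework for the proper map $p$), and the Embedding Theorem~\ref{thm:embedding_DA}, I compute
$$
\Hom_{\bRigDA(S)}(\Lambda_S(W), \iota^* p_* K) \;\cong\; \Hom_{\bRigDA(W)}(\Lambda_W, \iota^* p'_* f'^* K) \;\cong\; \Hom_{\bD(W'_{\et})}(\Lambda_{W'}, f'^* K).
$$

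For the target, the adjunction $p^*\dashv p_*$ and the left-adjoint mate of Corollary~\ref{cor:proj_base_change}, namely the invertible exchange $f'_\sharp p'^* \xrightarrow{\sim} p^* f_\sharp$, identify $p^* f_\sharp \Lambda_W$ with $f'_\sharp \Lambda_{W'}$, whence
$$
\Hom_{\bRigDA(S)}(\Lambda_S(W), p_* \iota^* K) \;\cong\; \Hom_{\bRigDA(W')}(\Lambda_{W'}, \iota^* f'^* K) \;\cong\; \Hom_{\bD(W'_{\et})}(\Lambda_{W'}, f'^* K),
$$
using once more $f'^*\iota^* = \iota^* f'^*$ and Theorem~\ref{thm:embedding_DA}. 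Both sides land in the same group; a diagram chase then confirms that the resulting identification is precisely the one induced by the exchange morphism under investigation, giving the invertibility.

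The principal obstacle is bookkeeping: one must verify carefully that each intermediate identification is induced by the \emph{canonical} Beck--Chevalley transformation, so that the composite truly agrees with the natural map $\iota^* p_* \Rightarrow p_* \iota^*$ of the statement, rather than with some unrelated isomorphism. The other essential input, invoked on the source side, is the sheaf-theoretic proper base change for \'etale sheaves on rigid analytic spaces, which plays the role of Corollary~\ref{cor:proj_base_change} at the level of $\bD(-_\et,\Lambda)$.
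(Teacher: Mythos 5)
Your proof is correct and follows essentially the same route as the paper's: reduce to checking on compact generators $\Lambda_S(W)$ via Proposition~\ref{prop:cptgen}, form the base-change square, and unwind both sides of the exchange using $\sharp$/$*$-adjunctions together with projective base change on the motivic side and Huber's proper base change on the \'etale-sheaf side. The paper organizes the same computation slightly differently: it first proves the ``base case'' $\Hom(\Lambda, p_*\iota^*L)\cong\Hom(\Lambda,\iota^* p_*L)$ (via $p^*\iota^*\Lambda\cong\iota^*p^*\Lambda$, both constant on $P$), then reduces the general $W$ to this case over $W'$ using Corollary~\ref{cor:proj_base_change} on one side and Huber's proper base change on the other, whereas you unwind both sides in parallel down to $\Hom_{\bD(W'_{\et})}(\Lambda_{W'},f'^*K)$ -- the content is the same. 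You also make explicit a point the paper glosses over, namely that twisted generators are taken care of by monoidality of $\iota^*$ and the projection formula for the proper map $p$, and you rightly identify the compatibility of identifications with the original exchange transformation as the place that needs care (the paper does this by the remark ``the natural transformation induces the identity on these groups'').
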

\begin{proof}
	We first prove an auxiliary result, namely that for any complex $L$ in $\bD(P_{\et},\Lambda)$, we have $\Hom(\Lambda,p_* \iota^* L)\cong\Hom(\Lambda,\iota^* p_* L)$.
	
	By the fully faithfulness of $\iota^*$ the second term can be re-written as
	$$
	\Hom(\Lambda,\iota^*p_*L)\cong\Hom(\Lambda,p_*L).
	$$
	On the other hand, the first term can be re-written as
	$$ \Hom(\La, p_* \iota^* L) \cong \Hom(p^* \La, \iota^* L)  \cong \Hom(p^* \iota^* \La, \iota^* L)  $$
	$$ \cong \Hom(\iota^* p^* \La, \iota^* L) \cong \Hom(p^* \La, L) \cong \Hom(\La, p_* L)$$
	where we used the adjunction $(p^*, p_*)$, the fact that $\iota^*$ is fully faithful and the fact that $p^* \iota^* \La \cong \iota^* p^* \La$ because both sheaves are isomorphic to the constant sheaf on $P$. Moreover, the natural transformation induces the identity on these groups, as wanted.
	
	We now prove the general statement. By Proposition \ref{prop:cptgen} the category $\bRigDA_{\et}(S,\Lambda)$ is generated by the motives of smooth affinoid varieties over $S$. It suffices then to prove that  for each $f\colon W\ra S$ smooth and each $L\in \bD(P_{\et},\Lambda)$ the natural transformation of the statement induces an isomorphism between the group $\Hom(f_\sharp\Lambda,p_*  \iota_*L)$ and $\Hom(f_\sharp\Lambda, \iota_*p_* L)$.
	We let $L'$ be $f'^* L$ where the map $f'$ (resp. $p'$) can be introduced as the base change of $f$ over $p$ (resp. of $p$ over $f$). By means of the projective base change for rigid motives (see Corollary  \ref{cor:proj_base_change}) we know that the former is canonically isomorphic to
	$$
	\Hom(f_\sharp\Lambda,p_*  \iota^*L)\cong \Hom(\Lambda,f^*p_*  \iota^*L)\cong\Hom(\Lambda,p'_* f'^* \iota^*L)\cong \Hom(\Lambda,p'_* \iota^* L'). 
	$$
	
	On the other hand, by means of the proper base change for Artin motives (see \cite[Proposition 4.4.1]{Huber}) we see that the latter is isomorphic to
	$$
	\Hom(f_\sharp\Lambda, \iota^*p_* L)\cong \Hom(\Lambda,f^* \iota^*p_* L)\cong \Hom(\Lambda, \iota^*f^*p_* L)\cong \Hom(\Lambda, \iota^*p'_* L').
	$$ 
	The two  are therefore canonically isomorphic by what we proved in the first part.
\end{proof}

We end our list of preliminary statements with the following  algebraic approximation result. %

\begin{prop} \label{prop:immersion}
	Let $X = \Spa A$ be a (rig-)smooth affinoid space over $\Spa R$, then there exist finite coverings $\{X_i\}$ of $X$ and $\{\Spa R_i\}$ of $\Spa R$ by admissible open affinoid subdomains such that each $X_i$ admits an embedding $j_i: X_i \to Z_i$ where $Z_i$ is the analytification of a smooth algebraic variety over $R_i$.
\end{prop}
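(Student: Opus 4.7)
The plan is to present $X = \Spa A$ as a closed analytic subvariety of a Tate ball, approximate the defining equations by polynomials using density of polynomials in the Tate algebra, and then invoke an Artin/Elkik-style approximation theorem to identify the approximated algebraic variety locally with $X$.

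First, I fix a presentation $A \cong R\langle x_1, \dots, x_n\rangle / I$, which realizes $X$ as a closed subvariety of the ball $\B^n_{\Spa R}$. Since $X$ is rig-smooth of some dimension $d$ over $\Spa R$, the Jacobian criterion provides, around each point of $X$, a choice of $c = n - d$ elements $f_1, \dots, f_c \in I$ such that they locally generate $I$ and that some $c \times c$ minor of $(\partial f_i/\partial x_j)$ is a unit in the relevant localization of $A$. Quasi-compactness of $X$ and of $\Spa R$ allows me to replace them by finite affinoid coverings $\{X_i\}$ and $\{\Spa R_i\}$ on which this data is defined uniformly, with generators $f_{i,1}, \dots, f_{i,c} \in R_i\langle x_1, \dots, x_n\rangle$.

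Next, since the polynomial ring $R_i[x_1, \dots, x_n]$ is dense in $R_i\langle x_1, \dots, x_n\rangle$ with respect to the Gauss norm, I approximate each $f_{i,j}$ by a polynomial $g_{i,j}$ with $\|f_{i,j} - g_{i,j}\|$ arbitrarily small. By continuity of the determinant, the corresponding Jacobian minor of $(g_{i,1}, \dots, g_{i,c})$ remains a unit on a neighborhood of the image of $X_i$, so the algebraic scheme $Z_i \colonequals \Spec\bigl(R_i[x_1, \dots, x_n]/(g_{i,1}, \dots, g_{i,c})\bigr)$ is smooth of relative dimension $d$ over $R_i$ on a Zariski neighborhood of that image. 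After restricting $Z_i$ to that neighborhood I may assume $Z_i$ itself is smooth.

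The main obstacle is to promote closeness of the equations to an actual embedding $X_i \hookrightarrow Z_i^{\an}$, since the two ideals $(f_{i,j})$ and $(g_{i,j})$ are only approximately equal in $R_i\langle x_1, \dots, x_n\rangle$. Here I would invoke an Artin- or Elkik-style approximation theorem for smooth affinoid algebras: using the rig-smoothness of $X_i$ together with the coincidence of Jacobian data modulo a sufficiently high power of the pseudo-uniformizer, the two regular sequences generate the same ideal locally, possibly after further refining the finite covers $\{X_i\}$ and $\{\Spa R_i\}$. Once this identification is in hand, $X_i$ coincides with the admissible open subdomain $Z_i^{\an} \cap \B^n_{R_i}$ of $Z_i^{\an}$, which supplies the desired embedding $j_i \colon X_i \to Z_i^{\an}$ into the analytification of a smooth algebraic variety over $R_i$.
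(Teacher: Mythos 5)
Your approach is fundamentally the same as the paper's: both rest on the fact that a rig-smooth affinoid can, locally on the base and the source, be cut out by \emph{polynomial} equations with an invertible polynomial Jacobian. The paper, however, gets this in two clean citations: Corollaire~1.1.51 of Ayoub's \cite{ayoub-rig} to refine the covers so that each $X_i$ becomes \'etale over a ball $\B^{n_i}_{R_i}$, and then Lemme~1.1.52 of the same reference, which is exactly the structure theorem you are trying to rederive. That lemma produces an \emph{exact} presentation
\[
A_i \cong \frac{R_i\langle T_1,\dots,T_n,U_1,\dots,U_m\rangle}{(P_1,\dots,P_m)}
\]
with the $P_j$ genuine polynomials and $d=\det(\partial P_j/\partial U_k)$ a unit of $A_i$, and then the embedding into the analytification of $\Spec\bigl(R_i[T,U]/(P_j)[1/d]\bigr)$ is immediate. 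Your plan (approximate Tate-algebra relations by polynomials and then correct using smoothness) is precisely the strategy behind such structure lemmas, so the route is not genuinely different, only re-derived from scratch.

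However, the last step of your argument is wrong as stated. After choosing polynomial approximations $g_{i,j}$ to the $f_{i,j}$, you assert that ``the two regular sequences generate the same ideal locally, possibly after further refining the finite covers.'' This cannot hold: already for $n=c=1$ over a point, the ideals $(x-a)$ and $(x-a')$ of $K\langle x\rangle$ are distinct for any $a\neq a'$, however small $|a-a'|$, and no refinement of a point-covering helps. What an Elkik/Artin-type approximation theorem actually delivers is an \emph{isomorphism} of $R_i$-affinoid algebras $R_i\langle x\rangle/(f_{i,j})\cong R_i\langle x\rangle/(g_{i,j})$ (for instance induced by an automorphism of $R_i\langle x\rangle$ close to the identity carrying one ideal to the other, or, in the packaging of Lemme~1.1.52, directly as an isomorphism of the quotient algebras), \emph{not} an equality of ideals inside the Tate algebra. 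It is this isomorphism — rather than an identification of closed subvarieties of the ball — that identifies $X_i$ with an admissible affinoid open of $Z_i^{\an}$ and yields the embedding $j_i$. With that correction, and with the approximation statement made precise or replaced by the citation of Lemme~1.1.52, the argument goes through.
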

\begin{proof}
	By   \cite[Corollaire 1.1.51]{ayoub-rig} $X$ admits a finite covering by admissible open affinoid subdomains $X_i$ each of which admits an \'etale map to $\B_{R_i}^{n_i}$ with $R_i$ as in the statement. Then, applying  \cite[Lemme 1.1.52]{ayoub-rig} to the \'etale maps $X_i \to \B_{R_i}^{n_i}$ we obtain that $X_i = \Spa(A_i)$ with $A_i$ a $K$-affinoid algebra that admits a presentation of the form
	\[ A_i = \frac{R_i \lt T_1, \ldots, T_n, U_1,\ldots,U_m \gt}{(P_1, \ldots, P_m)} \]
	with $P_j \in R_i [T_1, \ldots, T_n , U_1,\ldots,U_m]$ polynomials and $d\colonequals \det(\frac{\partial P_i}{\partial U_k})$ invertible in $A_i$.

	Therefore, we can embed $X_i$ as an admissible open affinoid of the analytification of the smooth affine $R_i$-variety
	\[ \Spec \l( \frac{R_i [ T_1, \ldots, T_n,U_1,\ldots,U_m ]}{(P_1, \ldots, P_m)}\left[\frac{1}{d}\right] \r ). \]
\end{proof}

We are finally able to prove our main result.

\begin{proof}[Proof of Theorem \ref{thm:rigmain}]
In light of the Embedding Theorems \ref{thm:embedding_DA} and \ref{thm:embDM} we are left to prove that a set of generators (as triangulated category with infinite sums) for $\bRigDA_{\et}(S,\Lambda)$ [resp. $\bRigDM_{\et}(S,\Lambda)$] lies in the image of the functors $\L \iota^*$. Since the functor 
$$\L a_{\tr}\colon\bRigDA_{\et}(S,\Lambda)\ra\bRigDM_{\et}(S,\Lambda)$$
sends a set of compact generators to a set of compact generators, we deduce that it suffices to prove the claim for  $\bRigDA_{\et}(S,\Lambda)$.

The category $\bRigDA_{\et}(S,\Lambda)$ is generated by motives of the form $ \Lambda_S(Y)$ with $Y$ affinoid and smooth over $S$. We can consider an arbitrary covering $\{S_i\}_{i \in I}$ of $S$ (resp. an arbitrary covering $\{Y_j\}_{j \in J}$ of $Y$) and write $\Lambda_S(Y)$ as the homotopy colimit of the \u{C}ech hypercover induced by the covering $\{S_i \times_S Y \}_{i \in I}$ (resp. $\{Y_j\}_{j \in J}$). Since  $\Lambda_S(S_i\times_S Y)=(j_i)_\sharp \Lambda_{S_i}(S_i\times_S Y)$, with $j_i \colon S_i\ra S$ being the open inclusion, and $(j_i)_\sharp$ sends Artin motives to Artin motives, it suffices  to prove that $\Lambda_{S}(Y)$ is Artin, locally on $S$ and $Y$.   

According to Proposition \ref{prop:immersion}, we can then suppose that 
\begin{itemize}
\item $S \cong \Spa R$ is affinoid;
\item that $f: Y = \Spa(A) \to S$ is affinoid;
\item $Y$ can be embedded as an admissible open inside the analytification of a smooth affine scheme $f'\colon Y'\ra\Spec R$ over $R$  of finite type, of pure dimension $d$.
\end{itemize}
From now on, all analytification functors are  over $\Spec R$. 
 We can embed  $Y'$ into a closed subvariety $Z$ of $\P^N_R$ by means of an open immersion $j'\colon Y'\ra Z$. We let $p$ be the structural morphism $p\colon Z\ra\Spec R$. 
The induced open immersion $j\colon Y\ra Z^{\an}$ has a formal model $\tilde{j}\colon\sY\ra\sZ$ so that $\Lambda_{Z^\an}(Y)=\xi(\tilde{j}_\sharp \Lambda_{\sY}(\sY))$, where we let $\bFormDA_{\et}(\sZ,\La)$ be the category of \'etale motives over smooth formal schemes  topologically of finite type over $\sZ$ (see \cite[Section 1.4.2]{ayoub-rig}) and
\[ \xi: \bFormDA_{\et}({\sZ}, \Lambda) \to \bRigDA_{\et}({Z^\an}, \Lambda) \]
is induced by the generic fiber functor $\sX\mapsto\sX_\eta$.  We recall that by the algebraic Rigidity Theorem \cite{ayoub-etale} and \cite[Corollary 1.4.23]{ayoub-rig} the special fiber functor $\sX\mapsto\sX_\sigma$ determines an equivalence $$
\bFormDA_{\et}({\sZ}, \Lambda)\cong\bDA(\sZ_\sigma,\La)\cong\bD((\sZ_{\sigma})_{\et},\La)$$
By  the canonical equivalence (see \cite[Th\'eor\`eme 18.1.2]{EGAIV4}) $\bD(\sZ_{\et},\Lambda)\cong\bD((\sZ_\sigma)_{\et},\Lambda)$ we  deduce that $\tilde{j}_\sharp \Lambda_{\sY}(\sY)$ is an Artin motive. Since $\xi$ preserves Artin motives (the generic fiber of an \'etale scheme over $\sS$ is \'etale over $\sS_\eta$), we deduce that $\Lambda_{Z^\an}(Y)$ is Artin, equal, say, to $\iota^*N$ with $N\in\bD(Z^{\an},\Lambda)$.

Since the analytification of an \'etale extension is still \'etale, the functor $\An^*$ preserves Artin motives. Also the functor $j'_\sharp$ does by its explicit description (Theorem \ref{thm:6f1}\eqref{sharp}). We   deduce then that $\An^*(j'_\sharp(\Lambda_{Y'}(Y')(d))[2d])$
  is $\iota^* M$ for some Artin motive $M$ over $Y'^{\an}$.

We let $\alpha$ be the immersion $Y\ra Y'^{\an}$. 
We now give an alternative description of the motive $\Lambda_S(Y)$. By Theorem \ref{thm:6f1} (1) and (2)  we have that ${f'_!}^{\an}\cong p_!^{\an}{j'_!}^{\an}\cong p_*^{\an}{j'}_\sharp^{\an}$ and up to considering  coverings of $S$ and $Y$, we can suppose that $f'^{\an}_\sharp(\alpha_\sharp \Lambda_Y(Y))\cong f'^{\an}_!(\alpha_\sharp \Lambda_Y(Y)\otimes\Lambda(d)[2d])$.

We then deduce 
$$
\begin{aligned}
\Lambda_S(Y)&\cong f_\sharp\Lambda_Y(Y)\cong{ f'^\an}_\sharp\alpha_\sharp\Lambda_Y(Y)\cong f'^{\an}_!(\alpha_\sharp\Lambda_Y(Y)\otimes\Lambda_{Y'^{\an}}(Y'^{\an})(d)[2d])\\ &\cong p^{\an}_*j'^{\an}_\sharp(\alpha_\sharp\Lambda_Y(Y)\otimes\Lambda_{Y'^{\an}}(Y'^{\an})(d)[2d])\\
&\cong p^{\an}_*(j_\sharp \La_Y(Y)\otimes j'^{\an}_\sharp(\Lambda_{Y'^{\an}}(Y'^{\an})(d))[2d])\\&\cong p^{\an}_*(\iota^*N\otimes \An^*(j'_\sharp(\Lambda_{Y'}(Y')(d))[2d]))\\&\cong p^{\an}_*\iota^*(N\otimes M)
\end{aligned}$$
where  we used the fact that the functor $j'^{\an}_\sharp$ commutes with tensor products of effective motives (see its definition - Theorem \ref{thm:6f1}\eqref{sharp}) together with the equality $ j=j'^{\an}\circ\alpha$ and, %
for the last equality, the monoidality of the functor $\iota^*$ (the product in $S_{\et}$ is the same as the one in $\Sm_S$). 
In order to prove that $\Lambda_S(Y)$ is Artin it suffices then to show that $p_*^{\an}$ commutes with $\iota^*$ which is proven in Proposition \ref{lemma:projective_base_change}.
\end{proof}

\section{Applications} \label{sec:applications}

In this concluding section we describe three applications of our main theorems. We keep the hypothesis used so far, with the difference that now $\La$ is supposed to be any ring. We keep the notation that $K$ is a complete non-Archimedean non-trivially valued field with finite $\ell$-cohomological dimension, and $S$ a normal rigid analytic variety over it.

\subsection{The \'etale realization functor}
We follow \cite{ayoub-etale} and we show how to use the Rigidity Theorem to produce \'etale realization functors. In particular, we can produce an $\ell$-adic realization functor for the category $\bRigDA_{\et}(K,\Q)$ completing the picture of ``classical'' realizations for rigid analytic motives (for a $p$-adic realization for this category see \cite{vezz-mw} and for a Betti-like realization see \cite{vezz-berk}).

We first introduce the target category, which is the derived category of $\ell$-adic \'etale sheaves, following Ekedhal. We refer to \cite[Section 5]{ayoub-etale} for details on this construction, which we simply adapt to the rigid analytic situation. 

\begin{defn}%
Let $J \subset \La$ be an ideal. The objects of the category $\La/J^*\Mod$  are diagrams of $\La$-modules
    $$
M_\bullet=\cdots\ra M_{s+1}\ra M_s\ra\cdots\ra M_1\ra M_0
    $$
    such that $J^s\cdot M_s=0$ for all $s\in\N$. In particular, each $M_s$ is canonically a $\La/J^s$-module. Morphisms are defined level-wise. The category $\Ch(\La/J^*\Mod)$ can be endowed with a (projective) model structure, giving rise to the category $\bD(S_{\et},\La/J^*)$ and  $\bRigDA_{\et}(S,\La/J^*)$ defined as in Definition \ref{defn:etale_motives}. The functors $M_\bullet\mapsto M_s$ induce left Quillen functors $$s^*\colon \bRigDA_{\et}(S,\La/J^*)\ra\bRigDA_{\et}(S,\La/J^s)$$ 
    $$s^*\colon \bD(S_{\et},\La/J^*)\ra\bD(S_{\et},\La/J^s)$$
    which are jointly conservative by (the obvious analogues of) \cite[Lemme 5.3 and 5.4]{ayoub-etale}. 
    
    We define the category 
$\widehat{\bD}(S_{\et},\La_J)$ to be the full triangulated subcategory of $ \bD(S_{\et},\La/J^*)$ whose objects are complexes $K$ such that the canonical map $(s+1)^*K\otimes_{\La/J^{s+1}}\La/J^s\ra s^*K$ is invertible for each $s\in\N$.

    We define the category 
$\widehat{\bD}^{\mathrm{ct}}(S_{\et},\La_J)$ (where $\mathrm{ct}$ stands for \emph{constructible}) to be the full triangulated subcategory of $ \widehat{\bD}(S_{\et},\La_J)$ whose objects are complexes $K$ such that for each $s\in\N$, 
the \'etale sheaves associated to $H_n(s^*K)$ are constructible in the sense of \cite[Definition 2.7.2]{Huber} for all $n\in\Z$, and zero for $|n|\gg0$.  
 
We define the category 
$\widehat{\bD}^{\mathrm{ct}}(S_{\et},\La_J\otimes\Q)$ to be the triangulated category %
 obtained from $\widehat{\bD}^{\mathrm{ct}}(S_{\et},\La_J)$ by tensoring all $\Hom$-groups with $\Q$ (it is still a pseudo-abelian  triangulated category, see \cite[Corollary B.2.3]{CD-etale}).%
\end{defn}

We can make a direct-limit argument on coefficients and use Rigidity to produce $\ell$-adic realizations. For a triangulated category $\bT$ we denote by $\bT^{\comp}$ the full subcategory of compact objects.
 
\begin{thm}\label{thm:etre}
Let $J$ be an ideal of $\La$ for which $\La/J$ is $N$-torsion, with $N$ invertible in the residue field of $K$. For any normal rigid analytic variety $S$ over $K$ there are  monoidal triangulated functors
$$
\fR_{S,J}\colon\bRigDA_{\et}(S,\La)\ra\widehat{\bD}(S_{\et},\Lambda_J)
$$
$$
\fR_{S,J}^{\Q}\colon\bRigDA^{\comp}_{\et}(S,\La\otimes\Q)\ra\widehat{\bD}^{\mathrm{ct}}(S_{\et},\Lambda_J\otimes\Q)
$$
which have the following properties (for the six functors formalism, see  Theorem \ref{thm:6f1} for the motivic side, and \cite{Huber} for the \'etale side).
\begin{enumerate}
	\item\label{etre} The composition of the first with $\widehat{\bD}(S_{\et},\Lambda_J)\ra \bD(S_{\et},\Lambda/J^s)$ gives the functor $$ \bRigDA_{\et}(S,\La)\stackrel{(-)\otimes^{\L}\La/J^s}{\longrightarrow} \bRigDA_{\et}(S,\La/J^s)\stackrel{\R\iota_*}{\cong} {\bD}(S_{\et},\Lambda/J^s)$$
	where the second equivalence is the one from the Rigidity Theorem.
\item They commute with the functors $f_*$ and $f^*$, for any morphism $f$ of normal rigid-analytic varieties. 
\item They commute with the functors $f_\sharp$ (left adjoint to $f^*$ in case $f$ is smooth).
\item They commute with the functors $f^{\an}_!$ and $f_{\an}^!$ for any morphism $f$ of quasi-projective schemes over a Tate algebra.
\item They commute with the bi-functor $\R\Hom$ when restricted to compact objects in the first variable.
\end{enumerate}
\end{thm}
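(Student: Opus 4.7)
The plan is to parallel the strategy of \cite[Section 5]{ayoub-etale}, replacing at every step the algebraic Rigidity Theorem with our Theorem \ref{thm:rigmain}. First I would construct a ``big'' motivic category $\bRigDA_{\et}(S, \La/J^*)$ as the homotopy category of a suitable model structure on $\Ch(\Psh(\Sm_S, \La/J^*\Mod))$, obtained from the projective model structure by Bousfield localization for the \'etale topology, $\B^1$-invariance and $\T$-stabilization. The base change with $\La/J^*$ at the level of presheaves provides a Quillen pair
\[
(-)\otimes^{\L}_\La \La/J^*\colon \bRigDA_\et(S, \La) \rightleftarrows \bRigDA_\et(S, \La/J^*),
\]
whose right adjoint is essentially a forgetful functor. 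One defines $\bD(S_\et, \La/J^*)$ in a completely analogous manner.

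Next I would apply the Rigidity Theorem level by level. Since $\La/J$ is $N$-torsion with $N$ invertible in the residue field of $K$, so is each quotient $\La/J^s$ (with $N$ replaced by $N^s$). Hence Theorem \ref{thm:rigmain} yields equivalences $\bD(S_\et, \La/J^s) \cong \bRigDA_\et(S, \La/J^s)$ for each $s$. A formal model-categorical argument as in \cite[Proposition 5.9]{ayoub-etale} (which only depends on the levelwise equivalence, not on the specific nature of the motivic category) then promotes these into an equivalence $\bD(S_\et, \La/J^*) \cong \bRigDA_\et(S, \La/J^*)$. Composing with the base change functor, one obtains $\fR_{S,J}$, and property \eqref{etre} holds by construction. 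Moreover, for any motive $M$, one has $(M\otimes^{\L}\La/J^{s+1})\otimes^{\L}_{\La/J^{s+1}}\La/J^s \simeq M \otimes^{\L}\La/J^s$, so the image lies in $\widehat{\bD}(S_\et, \La_J)$.

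For the rational version $\fR^{\Q}_{S,J}$, I would restrict $\fR_{S,J}$ to compact objects and tensor $\Hom$-groups with $\Q$. The remaining point is to verify that the image lies in $\widehat{\bD}^{\mathrm{ct}}(S_\et, \La_J) \otimes \Q$. By Proposition \ref{prop:cptgen} it suffices to check constructibility on the generators $\La_S(Y)$ with $Y$ smooth affinoid over $S$, which one can handle invoking Huber's results \cite[Sections 2.7 and 6.2]{Huber} together with the hypothesis of finite $\ell$-cohomological dimension. The six-functor compatibilities in properties (2)--(5) all reduce, level by level, to the analogous compatibilities between the Rigidity equivalence $\R\iota_*$ and the six operations. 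These in turn follow from the fact that $\L\iota^*$ is a monoidal left adjoint arising from a morphism of sites, combined with Theorem \ref{thm:6f1} on the motivic side and Huber's six-functor formalism on the \'etale side; as usual, the compatibility with $f_!^{\an}$ is deduced from that with $f^{\an}_*$ via the projection formula and Theorem \ref{thm:6f1}\eqref{jos}.

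The main obstacle is the assembly step: producing an honest functor into $\bD(S_\et, \La/J^*)$ rather than only a compatible system of functors at the homotopy level requires a careful model-categorical construction, mirroring Ayoub's. A secondary difficulty is checking that the promoted equivalence $\bD(S_\et, \La/J^*) \cong \bRigDA_\et(S, \La/J^*)$ still enjoys the six-functor compatibilities in the pro-system category; this ultimately boils down to the fact that all the relevant left adjoints commute with filtered homotopy colimits, together with the levelwise statement already proven via Theorem \ref{thm:rigmain}.
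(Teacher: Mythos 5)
Your overall strategy matches the paper's: build $\bRigDA_{\et}(S,\La/J^*)$ and $\bD(S_{\et},\La/J^*)$ as pro-system categories à la Ayoub--Ekedahl, apply the Rigidity Theorem level by level, assemble the levelwise equivalences into one, and define $\fR_{S,J}$ by composing with the change-of-coefficients functor. Property~(1) and the six-functor compatibilities (2)--(5) are then reduced, exactly as you say, to the levelwise statements and to Ayoub's arguments in \cite[\S 6]{ayoub-etale}, with Huber's formalism on the target side. So far this is essentially the same proof as the paper's.

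Where I would push back is your treatment of constructibility for $\fR^{\Q}_{S,J}$. You invoke Proposition~\ref{prop:cptgen} to reduce to the generators $\La_S(Y)$ with $Y$ smooth affinoid over $S$, and then say this ``one can handle invoking Huber's results \cite[Sections 2.7 and 6.2]{Huber}''. That step is not automatic. After applying $s^*$, the object in question is essentially the direct image $\R f_* (\La/J^s)$ along the structure morphism $f\colon Y\ra S$ of a smooth affinoid, and $\R f_*$ of a non-proper, non-finite morphism does not preserve constructibility for free; Huber's clean constructibility statement (\cite[Theorem 6.2.2]{Huber}) is formulated for $f_!$. The paper bridges this gap by using Proposition~\ref{prop:immersion} to replace $\La_S(Y)$ by $g^{\an}_\sharp j_\sharp\La$, with $j$ a Zariski open immersion into the analytification of a smooth $A$-scheme $g\colon Y'\ra\Spec A$, and then applying the already-proved commutations of $\fR$ with $j_\sharp$, $g^{\an}_\sharp$ and the Poincaré-duality identification $g^{\an}_\sharp\cong g_!^{\an}(-)(n)[2n]$ (locally), so that the whole computation is moved to $f_!$ and Huber's Theorem~6.2.2 applies. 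Your sketch omits both the appeal to Proposition~\ref{prop:immersion} and the reduction from $f_\sharp$ (or $f_*$) to $f_!$; as written, it does not actually establish that $\fR_{S,J}$ lands in $\widehat{\bD}^{\mathrm{ct}}(S_{\et},\La_J)$. (An alternative route, which the paper notes in a remark, is to observe that each composition $s^*\circ\fR_{S,J}$ has a right adjoint commuting with sums, hence preserves compact objects, and to invoke \cite[Proposition~20.17]{scholze-diam} identifying compact with constructible; if you prefer that route, you should say so explicitly rather than pointing vaguely at Huber.) A minor secondary remark: the verification that the assembled equivalence on the pro-system category enjoys the six-functor compatibilities is done via the jointly conservative family $\{s^*\}_s$, not via commutation with filtered homotopy colimits as you suggest at the end.
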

\begin{proof}
For the functor $\fR_{S,J}$, it suffices to copy verbatim the arguments of \cite[Th\'eor\`eme 6.9]{ayoub-etale}. Indeed, Rigidity gives rise to a canonical equivalence of tensor DG-categories $\bRigDA_{\et}(S,\La/J^*)\cong\bD(S_{\et},\La/J^*)$. The realization functor is then defined by means of
	$$
	\bRigDA_{\et}(S,\La)\ra	\bRigDA_{\et}(S,\La/J^*)\cong\bD(S_{\et},\La/J^*)
	$$
	where the first functor is induced by the obvious functor $M\mapsto (M\otimes\La/J^*)$ from $\La\Mod$ to $\La/J^*\Mod$. It is easily seen to take values in $\widehat{\bD}(S_{\et},\Lambda_J)$ and to commute with $f_\sharp$, $f^*$. As for $f_*$ one can use the proof of \cite[Th\'eor\`eme 6.3]{ayoub-etale} while for $\R\Hom$  one can use the proof of \cite[Th\'eor\`eme 6.4]{ayoub-etale}. The point $(4)$ deals with algebraic morphisms, so the statement for $f^{\an}_!$ follows at once from \cite[Th\'eor\`eme 1.4.33]{ayoub-rig} and \cite[Th\'eor\`eme 3.4(b)]{ayoub-betti} while for $f^!_{\an}$ one can consider separately the case where $f$ is smooth (in which \cite[Th\'eor\`eme 3.4(c)]{ayoub-betti}  holds) and the case where $f$ is a closed immersion, which follows from the localization triangle \cite[Proposition 1.4.9]{ayoub-th1} and point (2).   We leave the details to the reader.

    Following \cite[Th\'eor\`eme 9.7]{ayoub-etale}, in order to extend the functor $\fR_{S,J}$ (and its properties) to the functor $\fR_{S,J}^{\Q}$ one needs to prove that $\fR_{S,J}$ sends compact objects to the category $\widehat{\bD}^{\mathrm{ct}}(S_{\et},\La_J)$. As already remarked in Proposition \ref{prop:immersion}, a class of compact generators for $\bRigDA_{\et}(S,\La)$ is obtained by $f_\sharp(\La)(n)$ with $f\colon X\ra \Spa A\subset S$ is a smooth map of an affinoid  variety $X$ over an affinoid open subvariety $\Spa A$ of $S$. We can also suppose that $X$ is open in  a smooth variety $j\colon X\subset Y^{\an}$ which is the analytification of a smooth $A$-scheme of finite type $g\colon Y\ra\Spec A$. By the commutation of $\fR_{S,J}$ with the six operations and Theorem \ref{thm:6f1} we obtain $\fR_{Y^{\an},J} j_\sharp\cong j_!\fR_{X,J} $ and $\fR_{\Spa A,J}g^{\an}_\sharp\cong  g_!^{\an}(\fR_{Y^{\an},J}(n)[2n])$ %
     (the last isomorphism holding locally on $X$, see  Theorem \ref{thm:6f1}(4i)). It therefore suffices to show that for a smooth morphism $f\colon X\ra S$ of rigid analytic varieties, the functor $f_!\colon\widehat{\bD}(X_{\et},\La/J^s) \ra\widehat{\bD}(S_{\et},\La/J^s) $ preserves constructible complexes, which follows from \cite[Theorem 6.2.2]{Huber}. 
\end{proof}

\begin{rmk}Suppose $\La=\Z$ and $J=(\ell)$ with $\ell$ a prime invertible in the residue field of $K$. 
We point out that the realization functor $\fR$ computes $\ell$-adic \mbox{(co-)homology}. In order to do so we first put $\La'=\Z/\ell^N$ and compute, for any $p\colon X\ra S$ in $\Sm_S$ using Rigidity:
$$
\R^n p_*\La'\cong H_n\R\Hom(\La'_S(X),\L\iota^*\La')\cong H_n\R\Hom(\R\iota_{*}\La'_S(X),\La')%
$$
We deduce, by Theorem \ref{thm:etre}\eqref{etre}, that $\R\Hom(\fR_{S,\ell}\Z_S(X),\Z_\ell)\cong\R p_*\Z_\ell$.
\end{rmk}

\begin{rmk}
	We point out that the $\ell$-adic realization can alternatively be constructed using the universal property of rigid motives and the pro-\'etale cohomology \cite{bs,scholze-ph}. This is particularly convenient in order to define an infinity-category $\bD(S_{\proet},\Lambda\otimes\Q)$ containing $\widehat{\bD}(S_{\et},\Lambda_J\otimes\Q)$ as a full subcategory, allowing one to extend the functor $
	\fR_{S,J}^{\Q}$ 
	to a functor
	$$
	\fR_{S,J}^{\Q}\colon\bRigDA_{\et}(S,\La\otimes\Q)\ra\bD(S_{\proet},\Lambda\otimes\Q)
	$$ 
	by means of the universal property of $\bRigDA_{\et}(S,\La\otimes\Q)$. To this aim, it suffices to define a functor from the category $\Sm_S$ of smooth affinoid varieties over $S$ to $\bD(S_{\proet},\Lambda\otimes\Q)  $ having \'etale descent and homotopy invariance. One can then take the functor $X\mapsto\fR_{S,J}^{\Q}(\Lambda_S(X))$ defined above.
\end{rmk}

\begin{rmk}In order to prove that $\fR_{S,J}$ sends compact objects to constructible complexes, one could also argue by considering the composition with each $s^*$ which is canonically equivalent to the change of coefficients $\bRigDA_{\et}(S,\La)\ra\bRigDA_{\et}(S,\La/J^s)$ (see Theorem \ref{thm:etre}\eqref{etre}). This functor has a right adjoint which commutes with sums, hence it preserves compact objects. The result then follows from the comparison between compact and constructible complexes see \cite[Proposition 20.17]{scholze-diam}.
\end{rmk}

In case the base variety is the analytification of a quasi-projective scheme ${S}$ over a Tate algebra $A$, one can consider the analytification functor between algebraic and analytic motives. It is known that the \'etale cohomology
of $S$ coincides with the one of ${S^{\an}}$. We have the following motivic strengthening of this fact.

\begin{cor}
	Let  ${S}$ be a normal scheme of finite presentation over a Tate algebra  over $K$ and let $\La$ and $J$ be as in Theorem \ref{thm:etre}.  The analytification functor $$\L\An^*\colon\bDA_{\et}(S,\La) \ra\bRigDA_{\et}(S^{\an},\La)$$
	is compatible with the \'etale realization functors, i.e. there is a commutative diagram
	$$
	\xymatrix{
	\bDA_{\et}({S},\La)\ar[r]^-{\L\An^*}\ar[d]^{\fR_{S,J}} &
	\bRigDA_{\et}(S^{\an},\La)\ar[d]^-{\fR_{S^{\an},J}} \\
	\widehat{\bD}({S_{\et}},\La_J)\ar[r]^{\L\An^*} &
\widehat{\bD}({S^{\an}_{\et}},\La_J)\ \\
}
	$$
\end{cor}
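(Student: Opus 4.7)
The strategy is to reduce the claimed commutativity to a ``Rigidity vs.\ analytification'' square at each fixed level $\La/J^s$. By the construction of the realization functors (Theorem \ref{thm:etre}), both $\fR_{S,J}$ and $\fR_{S^{\an},J}$ are obtained by composing the change-of-coefficient functor $(-) \otimes^{\L}_{\La}\La/J^s$ with the inverse of the Rigidity equivalence $\L\iota^*$, and then assembling over $s$. Since base change of coefficients is tautologically compatible with $\L\An^*$, the corollary reduces to showing that for each $s \geq 1$ the diagram
$$
\xymatrix{
\bD(S_{\et},\La/J^s) \ar[r]^-{\L\iota^*} \ar[d]_{\L\An^*} & \bDA_{\et}(S,\La/J^s) \ar[d]^{\L\An^*} \\
\bD(S^{\an}_{\et},\La/J^s) \ar[r]_-{\L\iota^*} & \bRigDA_{\et}(S^{\an},\La/J^s)
}
$$
commutes up to a canonical natural isomorphism.

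For this, the plan is to pass to right adjoints. All four arrows in the square are left Quillen functors (\cf Proposition \ref{prop:recall}), so it suffices to exhibit a canonical isomorphism between $\R\iota_* \circ \R\An_*$ and $\R\An_* \circ \R\iota_*$ as functors from $\bRigDA_{\et}(S^{\an},\La/J^s)$ to $\bD(S_{\et},\La/J^s)$. Both composite right adjoints are computed by restriction and pushforward along evident morphisms of sites; concretely, the square of sites
$$
\xymatrix{
\Sm_{S^{\an}} \ar[d]_{\An} & S^{\an}_{\et} \ar[l] \ar[d]^{\An} \\
\Sm_S & S_{\et} \ar[l]
}
$$
(horizontal arrows being the inclusions of the small \'etale site into the big smooth site, vertical arrows being the pullback along $X \mapsto X^{\an}$) tautologically commutes, since the analytification of an \'etale morphism is \'etale. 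This induces the desired isomorphism of composite right adjoints, and passing back to left adjoints produces the required commutative square.

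As an alternative approach, one can test the commutation directly on a generating family. Both composites in the square preserve small colimits, so it is enough to evaluate them on a representable $\La_{S_{\et}}(X)$ with $X \to S$ \'etale of finite presentation; both routes produce the expected object $\La_{S^{\an}_{\et}}(X^{\an})$ inside $\bRigDA_{\et}(S^{\an},\La/J^s)$, using the explicit behaviour of $\L\iota^*$ and of $\L\An^*$ on representable presheaves (\cf Proposition \ref{prop:recall}). The main technical point I expect to require care is ensuring that the natural isomorphisms obtained at each level $s$ are compatible across the pro-system, so as to glue to a single isomorphism of functors valued in $\widehat{\bD}(S^{\an}_{\et},\La_J)$; this compatibility, however, follows from the naturality of the Rigidity equivalence of Theorem \ref{thm:rigmain} with respect to change of torsion coefficient ring.
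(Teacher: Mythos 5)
Your proposal is correct and follows essentially the same route as the paper: the paper's proof is simply "By construction, it suffices to prove that the analytification functor is compatible with the functors arising with the change of coefficients $\La\to\La/J^s$, which is obvious," treating the compatibility of the two Rigidity equivalences with $\L\An^*$ as automatic from the construction. Your version fills in precisely that implicit step — the per-level square $\L\An^*\circ\L\iota^* \cong \L\iota^*\circ\L\An^*$, established via the commuting square of sites (or, equivalently, by testing on representables) — and then observes, as the paper does, that everything else reduces to the tautological compatibility of analytification with base change of coefficients.
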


\begin{proof}
	By construction, it suffices to prove that the analytification functor is compatible with the functors arising with the change of coefficients $\La\ra\La/\ell^N$ which is obvious.
\end{proof}

\begin{rmk}
	By putting $S=\Sp K$ the Rigidity Theorem gives  equivalences
	$$\bD(K_{\et},\La)\cong\bDA(K,\La)\cong\bRigDA_{\et}(K,\La).
	$$
	In particular, $\ell$-adic \'etale cohomology is insensitive to analytification, which is already shown in \cite[Corollary 3.8.1]{Huber}.
\end{rmk}

\subsection{Rigid motives with and without transfers}

The Rigidity Theorem permits to improve the known comparison results about the categories $\bRigDA_{\et}(K, \La)$ and $\bRigDM_{\et}(K, \La)$, similarly to the algebraic case (see \cite[Annexe B]{ayoub-etale}). The current state of the art in the setting of rigid analytic motives is the following.

\begin{thm} \label{thm:comparison_ayoub}
	Let $\La$ be a $\Q$-algebra. The canonical functor 
	$$\L a_{\tr}\colon\bRigDA_{\et}(K,\Lambda)\ra\bRigDM_{\et}(K,\Lambda)$$
	is an  equivalence of monoidal DG-categories.
\end{thm}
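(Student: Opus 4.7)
The plan is to verify that the left adjoint $\L a_{\tr}$ of the adjunction $(\L a_{\tr},\R o_{\tr})$ is an equivalence; since the functor is already monoidal and of Quillen type by Proposition~\ref{prop:recall}(2), this will automatically yield the asserted equivalence of monoidal DG-categories. As $\L a_{\tr}$ is cocontinuous and sends the compact generators $\La_K(X)$ of $\bRigDA_{\et}(K,\La)$ (Proposition~\ref{prop:cptgen}) to the compact generators $\La^{\tr}_K(X)$ of $\bRigDM_{\et}(K,\La)$, essential surjectivity is automatic once fully faithfulness is established. Thus the task reduces to verifying that the unit $\eta_M\colon M\to\R o_{\tr}\L a_{\tr}M$ is an isomorphism on a set of compact generators: for any smooth affinoids $X,Y$ over $K$ and any $n\in\Z$, the natural map
$$
\Hom_{\bRigDA_{\et}(K,\La)}(\La_K(X),\La_K(Y)[n])\longrightarrow \Hom_{\bRigDM_{\et}(K,\La)}(\La^{\tr}_K(X),\La^{\tr}_K(Y)[n])
$$
should be a bijection.

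The key technical ingredient is Galois averaging, made available by the $\Q$-linearity of $\La$. For any finite \'etale Galois cover $\tilde U\to U$ of smooth affinoid varieties with Galois group $G$ of order $N$, the trace operator $\frac{1}{N}\sum_{g\in G} g^*$ splits the pullback map, and the splitting persists after passing to \'etale sheaves of $\La$-modules. Applied systematically, this shows that \emph{\'etale locally on the source} every finite correspondence between smooth affinoid varieties is a $\La$-linear combination of graphs of honest morphisms pulled back along finite \'etale covers. Consequently the presheaves $X\mapsto\La[\Hom(X,Y)]$ and $X\mapsto\Cor_K(X,Y)\otimes\La$ become equivalent after $(\B^1,\et)$-localization with $\La$-coefficients, which yields the required bijection on $\Hom$-groups.

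A clean way to organize this argument, following Ayoub's strategy in \cite[Annexe B]{ayoub-etale}, is to introduce an auxiliary topology on $\Sm_K$ (a rigid analytic analogue of the $h$- or $qfh$-topology, generated by \'etale covers together with finite surjective maps) in which transfers are automatic, and to prove the chain of equivalences
$$
\bRigDA_{\et}(K,\La)\simeq \bRigDA_{h}(K,\La)\simeq \bRigDM_{h}(K,\La)\simeq \bRigDM_{\et}(K,\La).
$$
The first and last equivalences exploit that, with $\Q$-coefficients, $h$-cohomology of normal rigid analytic varieties agrees with \'etale cohomology; the middle equivalence encodes the fact that $h$-sheaves canonically carry a transfer structure. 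The main obstacle is to develop this $h$-descent machinery in the rigid analytic setting, which requires rigid analogues of alteration/resolution techniques and of the pseudo-Galois construction from \cite[Annexe B]{ayoub-etale}; these geometric inputs are precisely those already provided by the foundational work of Ayoub \cite{ayoub-rig}.
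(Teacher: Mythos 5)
Your proposal takes a genuinely different route from the paper. The paper does not build the $h$-descent machinery from scratch: it quotes \cite[Corollary 4.20]{vezz-DADM}, which already gives the comparison $\bRigDA_{\Frobet}(K,\La)\simeq\bRigDM_{\et}(K,\La)$ for $\Q$-algebras $\La$ (where $\Frobet$ denotes the Frob-\'etale localization, the Verdier quotient of $\bRigDA_{\et}$ by the relative Frobenius maps). What remains is the small but crucial verification that the Frobenius localization is already trivial, i.e.\ that the relative Frobenius maps are invertible in $\bRigDA_{\et}(K,\La)$ itself. The paper achieves this by showing that the subcategory where Frobenius acts invertibly contains $\L\An^*M$ for all $M\in\bDA_{\et}(K,\La)$ (invoking the separatedness of algebraic \'etale motives, \cite[Th\'eor\`eme 3.9]{ayoub-etale}), and that such analytifications generate $\bRigDA_{\et}(K,\La)$ by \cite[Th\'eor\`eme 2.5.35]{ayoub-rig}.

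Your Galois-averaging/$h$-topology sandwich is the same strategy Ayoub and Vezzani followed in the algebraic and rigid cases respectively, so the plan is not wrong in spirit. But there are two points where it underestimates the difficulty. First, the ``main obstacle'' you acknowledge — rigid analogues of alterations, pseudo-Galois covers, $h$-descent — is precisely the content of \cite{vezz-DADM}; simply asserting that ``these geometric inputs are already provided by \cite{ayoub-rig}'' conflates the foundations (which Ayoub does provide) with the actual comparison theorem (which he does not, and which Vezzani proves only after inverting Frobenius). Second, and more seriously, your argument never mentions the relative Frobenius. When $K$ or its residue field has characteristic $p>0$, a finite correspondence can involve the transpose of a Frobenius morphism, which is not, even \'etale-locally, a $\La$-combination of graphs pulled back along \'etale covers. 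Galois averaging splits off the separable part but does not dispose of the purely inseparable part; with $\Q$-coefficients one can hope to invert Frobenius (since its degree is a power of $p$), but that requires a separate argument — exactly the step the paper isolates and proves. Without addressing this, the claimed bijection on $\Hom$-groups between $\La_K(X)$ and $\La^{\tr}_K(X)$ does not follow.
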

\begin{proof}
By \cite[Corollary 4.20]{vezz-DADM} we can deduce the statement for the categories $\bRigDM_{\et}(K,\Lambda)$ and $\bRigDA_{\Frobet}(K,\Lambda)$ the latter being the localization of $\bRigDA_{\et}(K,\Lambda)$ over the relative Frobenius maps (see \cite[Section 2.1]{vezz-DADM} for the definition of the Frob-\'etale topology and its relation with motives). We claim that the relative Frobenius maps are already equivalences in $\bRigDA_{\et}(K,\Lambda)$. Indeed, they induce an endofunctor $\Phi$ and a natural transformation $\Phi\Rightarrow \id$ in $\bRigDA_{\et}(K,\Lambda)$. We let $\catT$ be the full triangulated subcategory of those objects where the natural transformation is invertible. By the separatedness property of $\bDA_{\et}(K,\Lambda)$ (see \cite[Th\'eor\`eme 3.9]{ayoub-etale})  the category $\catT$, which is obviously closed under sums and cones, contains the motives of the form $\L\An^*M$ with $M\in \bDA_{\et}(K,\Lambda)$. But such motives generate the whole category (by \cite[Th\'eor\`eme 2.5.35]{ayoub-rig}) so that $\catT=\bRigDA_{\et}(K,\Lambda)$ as wanted.
\end{proof}

Using the same strategy as in \cite{ayoub-etale} we can promote the comparison above to motives with $\Z[1/p]$-coefficients using the Rigidity Theorem. 

\begin{thm} \label{thm:comparison}
		Let $\La$ be a $\Z[\frac{1}{p}]$-algebra, where $p$ is the exponential residual characteristic of $K$.  The canonical functor 
	$$\L a_{\tr}\colon\bRigDA_{\et}(K,\Lambda)\ra\bRigDM_{\et}(K,\Lambda)$$
	is an  equivalence of monoidal DG-categories.
\end{thm}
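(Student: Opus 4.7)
The plan is to interpolate between the two comparison results already available: Theorem \ref{thm:comparison_ayoub} for $\Q$-algebra coefficients and Theorem \ref{thm:rigmain} for torsion coefficients prime to the residual characteristic. Since $\L a_{\tr}$ is a $\La$-linear left adjoint, it commutes with homotopy colimits and with the change-of-coefficients functor $(-) \otimes^{\L}_\La \La'$ for every $\La$-algebra $\La'$. Tensoring the short exact sequence $0 \to \Z[1/p] \to \Q \to \Q/\Z[1/p] \to 0$ over $\Z[1/p]$ with any motive $M$ yields a distinguished triangle
\[ M \to M \otimes^{\L}_{\Z[1/p]} \Q \to M \otimes^{\L}_{\Z[1/p]} \Q/\Z[1/p] \to [1], \]
whose third term is the filtered homotopy colimit $\colim_{(N,p)=1} M \otimes^{\L} \Z/N$, since $\Q/\Z[1/p]$ is the filtered colimit of the finite cyclic groups $\frac{1}{N}\Z/\Z$ over $N$ coprime to $p$.

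Next, I would argue that it suffices to show the unit $\id \to \R o_{\tr} \circ \L a_{\tr}$ is invertible on a set of compact generators of $\bRigDA_{\et}(K,\La)$, since both categories are compactly generated and $\L a_{\tr}$ preserves compactness (as noted in the proof of Theorem \ref{thm:rigmain}). Applying the triangle above to such a generator and checking invertibility termwise via the five lemma in the resulting long exact sequences of $\Hom$-groups, the claim splits into two sub-statements. The first is that $\L a_{\tr}$ is an equivalence after base change to the $\Q$-algebra $\La \otimes \Q$, which is exactly Theorem \ref{thm:comparison_ayoub}. The second is that $\L a_{\tr}$ is an equivalence after base change to each $\Z/N$ with $N$ coprime to $p$, and so with $N$ invertible in the residue field of $K$; by Theorem \ref{thm:rigmain} both $\bRigDA_{\et}(K,\La/N)$ and $\bRigDM_{\et}(K,\La/N)$ are canonically equivalent to $\bD(K_{\et},\La/N)$, and these two Rigidity equivalences factor through $\L a_{\tr}$ because $\L\iota^*$ on the $\bRigDM$ side is by construction $\L a_{\tr} \circ \L\iota^*$ on the $\bRigDA$ side. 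Essential surjectivity is then automatic, since $\L a_{\tr}$ sends a set of compact generators to a set of compact generators.

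The main obstacle I expect is to handle the two commutations cleanly. On the one hand, compactness of the generators $\La_K(X)$ on the source side ensures that $\Hom$ out of them commutes with the filtered homotopy colimit defining the torsion factor; however, on the target one needs that $\R o_{\tr} \L a_{\tr}$ also respects this colimit, which ultimately follows from the compactness statements of Proposition \ref{prop:cptgen} together with the fact that $\R o_{\tr}$ preserves homotopy colimits of compact objects. On the other hand, the factorization of the Rigidity equivalences of Theorem \ref{thm:rigmain} through $\L a_{\tr}$ must be checked at the level of the defining sites $\Sm_S \to \Cor_S$; this is essentially built into the very definition of $\L a_{\tr}$ but deserves to be recorded explicitly. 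The overall strategy closely mirrors the analogous passage from $\Q$-coefficients to $\Z[1/p]$-coefficients for algebraic motives in \cite[Annexe B]{ayoub-etale}.
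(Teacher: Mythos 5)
Your proposal is correct and follows essentially the same route as the paper: reduce to compact generators, use the triangle $\Z[1/p]\to\Q\to\Q/\Z[1/p]$ together with compactness of the source to split into a rational case (Theorem \ref{thm:comparison_ayoub}) and a prime-to-$p$ torsion case (Theorem \ref{thm:rigmain}), then conclude essential surjectivity from preservation of compact generators. The paper streamlines one of your worries by first using the change-of-coefficients adjunction to reduce immediately to $\La=\Z[1/p]$, after which the colimit commutation you flag for $\R o_{\tr}$ is automatic since $\L a_{\tr}$ preserves compact objects.
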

\begin{proof}
	It suffices to adapt the proof of \cite[Corollaire B.3]{ayoub-etale} to rigid motives. We recall this argument for the convenience of the reader.
	
	We remark that the functor $\L a_{\tr}\colon\bRigDA_{\et}(K,\Lambda)\ra\bRigDM_{\et}(K,\Lambda)$ sends a generating set of compact objects of the first category (motives of the form $\Lambda(S)[n](k)$ with $S$ affinoid, \'etale  over $\B^N_K$) to a generating set of compact objects of the second (motives $\Lambda^{\Tr}(S)[n](k)$ with $S$ as before). By means of \cite{ayoub-rig}[Lemme 1.3.32] it then suffices to show that for any such $M,N$ compact in $\bRigDA_{\et}(K,\Lambda)$ the map
	\begin{equation}\label{bij}
	\Hom_{\bRigDA_{\et}(K,\Lambda)}(M,N)\ra\Hom_{\bRigDM_{\et}(K,\Lambda)} (\L a_{\Tr}M, \L a_{\Tr}N)
	\end{equation}
	is bijective. We remark that for any map of rings $\Lambda\ra\Lambda'$ and any object $N$ of $\bRigDA_{\et}(K,\Lambda')$ (resp. $\bRigDM_{\et}(K,\Lambda)$) we have  the following adjunctions
	$$
	\Hom_{\bRigDA_{\et}(K,\Lambda')}(M\otimes^{\L}_{\Lambda}\Lambda',N)\cong\Hom_{\bRigDA_{\et}(K,\Lambda)}(M,N)
	$$
	$$
	\Hom_{\bRigDM_{\et}(K,\Lambda')}(M\otimes^{\L}_{\Lambda}\Lambda',N)\cong\Hom_{\bRigDM_{\et}(K,\Lambda)}(M,N)
	$$
	and since we may assume that $M=\Lambda(S)[n](k)\cong\Z[1/p](S)[n](k)\otimes^{\L}\Lambda$ it suffices to consider the case $\Lambda=\Z[1/p]$.
	
	We can consider the exact triangle
	$$
	\Z[1/p]\ra\Q\ra\Q/\Z[1/p]\cong\colim\Z/\ell^k\Z\ra(\Z[1/p])[1]
	$$
	and tensor it with $N$, where the colimit ranges over primes $\ell\neq p$ and natural numbers $k$. Since $M$ as well as $\L a_{\Tr}M$  are compact, we are reduced to proving the bijectivity of  \eqref{bij} for the cases $N=N\otimes\Q$ and $N=N\otimes\Z/\ell^k$ separately. Using the adjunctions above once more, one can test the fully faithfulness of $\L a_{\Tr}$ in the cases $\Lambda=\Q$ and $\Lambda=\Z/\ell^k$ which follow from Theorem \ref{thm:comparison_ayoub} and Theorem \ref{thm:rigmain} respectively. 
\end{proof}

\begin{rmk}
Even though there is a version of Theorem \ref{thm:comparison_ayoub} which holds also for effective motives (see \cite[Theorem 4.1]{vezz-DADM})  there is no effective version of the Rigidity Theorem without transfers, so that Theorem \ref{thm:comparison} cannot be stated for effective motives.
\end{rmk}

\subsection{The motivic tilting equivalence with $\Z[1/p]$-coefficients}

In this section we focus on rigid motives over perfectoid fields and we answer positively to the conjecture stated in \cite[Remark 7.28]{vezz-fw}. We recall that a perfectoid field $K$ is a non-Archimedean field such that $|p| < 1$ for a prime $p$ and that the Frobenius morphism $\cO_K/p \to \cO_K/p$ is surjective (classical examples include $\C_p$ or the completion of $\Q_p(\mu_{p^\infty})$). The interest about these objects comes from the tilting functor $K \mapsto K^\flat$  which associates to $K$ a perfectoid field of characteristic $p$ (in the examples above, one obtains the complete algebraic closure of $\F_p(\!(T)\!)$ and the completion of $\F_p(\!(T^{1/p^\infty})\!)$ respectively). %

\begin{thm}[{\cite[Theorem 1.3]{scholze}}] \label{thm:scholze}
The tilting functor induces an equivalence between the small \'etale site over $K$ and the small \'etale site over $K^\flat$. 
\end{thm}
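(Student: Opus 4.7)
The plan is to translate the geometric statement about small \'etale sites into an algebraic one about finite \'etale algebras: for a field $F$, the small \'etale site over $\Spa F$ is equivalent, via the functor ``take global sections on connected components,'' to the category of finite \'etale $F$-algebras, and hence to the category of finite continuous $\Gal(F^{\sep}/F)$-sets. So the theorem amounts to constructing a canonical isomorphism of profinite Galois groups $\Gal(K^{\sep}/K)\cong\Gal(K^{\flat,\sep}/K^\flat)$ that is induced by a functorial tilting operation on finite separable extensions, compatibly with the topology on both sides.

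The first main step is to set up a tilting equivalence at the much larger level of perfectoid algebras, of which finite separable extensions of $K$ will turn out to be special cases. Given a perfectoid $K$-algebra $A$ with ring of power-bounded elements $A^\circ$, one defines $A^{\flat,\circ}\colonequals\varprojlim_{\Phi}A^\circ/p$ with the inverse-limit topology, and $A^\flat\colonequals A^{\flat,\circ}[1/\varpi^\flat]$ for a pseudo-uniformizer $\varpi^\flat$ coming from a compatible system of $p$-power roots of a pseudo-uniformizer of $K$. The goal at this stage, achievable by essentially formal arguments with almost mathematics over $(\cO_K,\fm)$, is to show that $A\mapsto A^\flat$ is an equivalence from perfectoid $K$-algebras to perfectoid $K^\flat$-algebras, with inverse built out of Fontaine-style $\theta$-maps, and that it induces a homeomorphism on the associated adic spectra.

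The heart of the argument, and the step requiring real work, is to prove that this correspondence restricts to an equivalence between the categories of finite \'etale algebras over $K$ and over $K^\flat$. This decomposes into three substeps: (i) every finite separable extension $L/K$ is itself perfectoid, which one checks by showing the Frobenius on $\cO_L/p$ is surjective using explicit control of the different in highly ramified towers; (ii) the tilt of a finite \'etale $K$-algebra is finite \'etale over $K^\flat$ of the same degree; and (iii) the functor is essentially surjective and fully faithful. Step (ii) is the essential content of the \emph{almost purity theorem}: passage to the tilt preserves the finite \'etale condition up to almost isomorphism, and in the perfectoid setting almost isomorphisms are genuine isomorphisms. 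The main obstacle is exactly this almost purity statement, which must be proved (following Faltings' original strategy, refined via Scholze's perfectoid formalism) by combining the tilting equivalence for perfectoid algebras with a careful trace/norm argument.

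Finally, one verifies that the bijection of finite \'etale algebras is functorial and compatible with composition and fiber products, and hence with the \'etale topology on both sides. By the initial reduction to Galois categories this produces the desired equivalence of small \'etale sites $K_{\et}\cong K^\flat_{\et}$.
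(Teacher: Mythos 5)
This statement is not proved in the paper at all: it is cited directly from Scholze's \emph{Perfectoid spaces} (\cite[Theorem 1.3]{scholze}), so there is no ``paper's own proof'' to compare yours against. What you have written is a reasonable high-level outline of Scholze's original argument: reduce the equivalence of small \'etale sites to an equivalence of categories of finite \'etale algebras (equivalently, an isomorphism of absolute Galois groups), set up the tilting equivalence $A\mapsto A^\flat$ on perfectoid $K$-algebras via $A^{\flat\circ}=\varprojlim_\Phi A^\circ/p$, and then show this restricts to finite \'etale algebras via almost purity. That is indeed where the real work lies.

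One inaccuracy worth flagging. You present substep (i) --- that every finite separable extension $L/K$ of a perfectoid field is itself perfectoid --- as something one proves up front ``using explicit control of the different in highly ramified towers.'' That is the Fontaine--Wintenberger-style argument, and it works when $K$ arises from a concrete arithmetically profinite tower (e.g.\ the cyclotomic tower over $\Q_p$). For a general perfectoid field it is circular: you do not have a tower to control. In Scholze's proof this fact is \emph{deduced} from the tilting correspondence and almost purity, not established independently of them (\cite[Theorem 3.7]{scholze}, proved after the almost-mathematics machinery). Your substeps (i) and (ii) are therefore not sequential ingredients; they are proved jointly, by a bootstrap. Apart from this ordering issue the sketch is sound, and it correctly identifies almost purity as the obstacle.
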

It can be restated motivically as an equivalence between Artin motives over the two fields. 
The main result of \cite{vezz-fw} is the extension of this result to a motivic tilting equivalence, where we write  $\bRigDA_{\et}(K,\Lambda)$ for $\bRigDA_{\et}(\Sp K,\Lambda)$.

\begin{thm} \label{thm:main_alberto}\label{thm:tilt}
	Let $\La$ be a $\Q$-algebra. There is an  equivalence
	\[ \bRigDA_{\et}(K,\La) \cong \bRigDA_{\et}(K^\flat,\La), \]
\end{thm}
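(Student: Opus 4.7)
The plan is to reduce the motivic tilting equivalence to Scholze's tilting equivalence for perfectoid spaces (Theorem \ref{thm:scholze}, suitably extended from the small étale site to the site of smooth perfectoid spaces). The natural auxiliary gadget is a category of motives over perfectoid spaces: for a perfectoid field $L$, one defines $\bPerfDA_{\et}(L,\La)$ as the homotopy category of complexes of presheaves on smooth perfectoid $L$-spaces, localized at the étale topology and at homotopies with respect to the perfectoid closed unit ball. Since tilting is an equivalence of étale sites that sends the closed unit ball over $K$ to the closed unit ball over $K^\flat$, one obtains immediately a monoidal equivalence
\[ \bPerfDA_{\et}(K,\La) \cong \bPerfDA_{\et}(K^\flat,\La). \]

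The real content of the theorem then reduces to constructing, for $L\in\{K, K^\flat\}$, an equivalence $\bRigDA_{\et}(L,\La)\cong\bPerfDA_{\et}(L,\La)$ that is natural enough to fit into a commutative square with the tilting isomorphism above. Over the characteristic $p$ field $K^\flat$ this is the easier side: perfectoidization $Y\mapsto \widehat{Y^{\mathrm{perf}}}$ is, at the level of motives, a limit under the relative Frobenius. By the separatedness property of étale motives (\cite[Théorème 3.9]{ayoub-etale}, invoked as in the proof of Theorem \ref{thm:comparison_ayoub}), Frobenius is already invertible in $\bRigDA_{\et}(K^\flat,\La)$ with $\Q$-coefficients, so this category coincides with its Frobenius-étale localization $\bRigDA_{\Frobet}(K^\flat,\La)$ of \cite{vezz-DADM}, which in turn one identifies with $\bPerfDA_{\et}(K^\flat,\La)$ by explicitly computing morphisms on generators of the form $\La(X)$ with $X$ étale over a torus.

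Over the mixed characteristic field $K$ no Frobenius is available, and this is where the main obstacle lies. The strategy is to produce a compact set of generators of $\bRigDA_{\et}(K,\La)$ coming from pro-étale $\Z_p^d$-towers $X_\infty\to X$ above smooth affinoids étale over $\B^d_K$, whose limits are perfectoid, and to match their motivic cohomology with the corresponding objects in $\bPerfDA_{\et}(K,\La)$ via a continuity/approximation argument (in spirit close to Scholze's almost purity theorem). Concretely, one introduces a category $\wRigDA_{\et}$ of motives over semi-perfectoid spaces as a bridge, shows that it receives fully faithful functors from both $\bRigDA_{\et}$ and $\bPerfDA_{\et}$, and deduces an equivalence of these subcategories. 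The hard part is controlling what descent along these pro-étale towers costs: it is precisely here that one must tensor with $\Q$, to absorb the denominators coming from the Frobenius-inversion transported from the $K^\flat$ side via Scholze's equivalence.
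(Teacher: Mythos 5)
Your outline matches the paper's approach exactly: the paper's own proof of this statement is just a citation to \cite[Theorem 7.26]{vezz-fw} paired with \cite[Corollary 4.20]{vezz-DADM} and the Frobenius-descent trick from Theorem \ref{thm:comparison_ayoub}, and the strategy you describe (perfectoid motives $\bPerfDA_{\et}$, Scholze tilting there for any $\La$, perfection over $K^\flat$, and the semi-perfectoid bridge $\wRigDA_{\et}$ over $K$) is precisely the one the paper recalls immediately after the theorem statement. The only imprecision is your parenthetical explanation that $\Q$-coefficients serve ``to absorb the denominators coming from Frobenius-inversion'': Frobenius denominators are only powers of $p$, so that alone would require merely $\Z[1/p]$; the genuine need for $\Q$ in \cite{vezz-fw} comes from the comparison of motives with and without transfers via the $\Frobet$-topology (which at the time required $\Q$), and it is exactly the Rigidity Theorem of the present paper that later relaxes this to $\Z[1/p]$ in Theorem \ref{thm:perfectoid_comparison}.
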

\begin{proof}
	\cf \cite[Theorem 7.26]{vezz-fw} paired up with \cite[Corollary 4.20]{vezz-DADM}. See the proof Theorem \ref{thm:comparison_ayoub} on how to avoid the $\Frob$-localization. 
\end{proof}

We refer to \cite{vezz-fw} for a thorough discussion of this result and its meaning. We summarize here the construction of the connecting functor.    %

One can consider  the category of perfectoid motives $\bPerfDA_{\et}(K)$ which is obtained by  considering motives over the \'etale site of (geometrically) smooth perfectoid spaces over $K$ (homotopies are defined over the perfectoid disc $\widehat{\B}^1=\Spa K\langle T^{1/p^\infty}\rangle$ and the twist is defined by considering the cokernel of the map of sheaves $\Lambda\ra\Lambda(\Spa K\langle T^{\pm1/p^\infty}\rangle)$).

Scholze's tilting equivalence \cite[Theorems 1.9 and 1.11]{scholze}  straightforwardly implies that the adjunction induced by the tilting functor 
$$
\L\flat^*\colon \bPerfDA_{\et}(K,\Lambda)\leftrightarrows\bPerfDA_{\et}(K^\flat,\Lambda)\colon\R\flat_*
$$
is an equivalence (for any $\Lambda$).

In the case when $\car K=p$, the perfection functor induces an adjunction
\begin{equation}\label{eq:Perf}
\L\Perf^*\colon \bRigDA^{}_{\et}(K,\Lambda)\leftrightarrows\bPerfDA_{\et}(K,\Lambda)\colon\R\Perf_*
\end{equation}
which is shown to be invertible if $\Lambda$ is a $\Q$-algebra (see \cite[Theorem 6.9]{vezz-fw}  
 and the proof of Theorem \ref{thm:comparison_ayoub} to deduce the stable statement without $\Frob$-localization).

Suppose now $\car K=0$. As a further auxiliary category, one can also introduce $\wRigDA_{\et}(K,\Lambda)$ which is the motivic category over the \'etale site of ``semi-perfectoid spaces'', that is: those adic spaces which are locally \'etale over $\B^n\times\widehat{\B}^m$  (homotopies are considered over $\widehat{\B}^1$ and the twist is defined by considering the cokernel of the map of sheaves $\Lambda\ra\Lambda(\Spa K\langle T^{\pm1}\rangle)$ (we follow here the notation of \cite{vezz-tilt4rig}. This category is denoted $\widehat{\mathbf{Rig}}\bDA_{\et,\widehat{\B}^1}(K,\La)$ in \cite{vezz-fw}).
The canonical inclusion of \'etale sites induces a Quillen adjunction:
$$
\L\iota^*\colon \bRigDA_{\et}(K,\Lambda)\leftrightarrows\wRigDA_{\et}(K,\Lambda)\colon\R\iota_*
$$
and one can produce explicitly a functor (see \cite[Proposition 7.22]{vezz-fw}) %
$$
\R j_*\colon \wRigDA^{}_{\et}(K,\Lambda)\ra \bPerfDA^{}_{\et}(K,\Lambda).
$$
that can be described as follows: it maps a spectrum $\{M_i\}_{i \in \N}$ to the spectrum $\{j_*M_i\}_{i \in \N}$ where $(j^*,j_*)$ is the Quillen equivalence on effective motives induced by the inclusion of sites from perfectoid spaces in semi-perfectoid spaces. 

In particular, even in characteristic $0$, one has a connecting functor 
\begin{equation}\label{eq:ji}
\R j_*\L\iota^*\colon\bRigDA^{}_{\et}(K,\Lambda)\ra \bPerfDA^{}_{\et}(K,\Lambda)
\end{equation}
which is shown  to be invertible if $\Lambda$ is a $\Q$-algebra (see \cite{vezz-fw}).

The contribution of this paper to this topic is the following theorem.

\begin{thm} \label{thm:perfectoid_comparison}
	Let $\La$ be a $\Z[\frac{1}{p}]$-algebra, where $p$ is the residual exponential characteristic of $K$. The functor $\R\Perf_*\L\flat^*\R j_*\L\iota^*$ induces an equivalence

	\[ \bRigDA_{\et}(K,\Lambda) \cong \bRigDA_{\et}(K^\flat,\Lambda). \]
	Moreover, the equivalence above is compatible with the $\ell$-adic \'etale realization functors of Theorem \ref{thm:etre}.

\end{thm}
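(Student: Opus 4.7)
The plan is to mimic the proof of Theorem \ref{thm:comparison}, replacing the comparison between transfers and no-transfers by the tilting functor $\Phi\colonequals\R\Perf_*\L\flat^*\R j_*\L\iota^*$. First, since $\bRigDA_{\et}(K,\La)$ is compactly generated by motives of the form $\La_K(S)[n](k)$ with $S\ra \B^N_K$ smooth étale (and analogously on the tilted side), and since $\Phi$ commutes with the relevant constructions to carry such generators to compact generators of the target, it is enough, by \cite[Lemme 1.3.32]{ayoub-rig}, to establish the bijectivity of
\beq\label{eq:mainbij}
\Hom_{\bRigDA_{\et}(K,\La)}(M,N)\ra \Hom_{\bRigDA_{\et}(K^\flat,\La)}(\Phi M,\Phi N)
\eeq
for compact $M,N$. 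Standard base-change of coefficients along $\Z[1/p]\ra\La$ reduces us to the case $\La=\Z[1/p]$.

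Next, following the pattern of the proof of Theorem \ref{thm:comparison}, tensor the target $N$ with the exact triangle
\beqn
\Z[1/p]\ra\Q\ra \mathop{\colim}\limits_{\ell\neq p,\,k} \Z/\ell^k\Z\ra\Z[1/p][1].
\eeqn
Since $M$ and $\Phi M$ are compact and $\Phi$ preserves homotopy colimits as a composition of left/right adjoints between compactly generated categories (one must verify that $\R j_*$ on the relevant subcategories commutes with small sums; this follows from the spectrum-level description recalled just before the statement of Theorem \ref{thm:perfectoid_comparison}), it suffices to prove \eqref{eq:mainbij} in the two cases $\La=\Q$ and $\La=\Z/\ell^k$ separately. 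The first case is Theorem \ref{thm:main_alberto}, so the only real task is the torsion case.

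For $\La=\Z/\ell^k$ with $\ell\neq p$, the strategy is to identify $\Phi$ with the composition of the Rigidity equivalences of Theorem \ref{thm:rigmain} on each side with Scholze's equivalence of small étale sites of Theorem \ref{thm:scholze}. More precisely, I would form the diagram
\beqn
\xymatrix{
\bD(K_{\et},\La)\ar[r]^{\sim}_{\text{Thm \ref{thm:scholze}}}\ar[d]_{\L\iota^*}^{\sim} & \bD(K^\flat_{\et},\La)\ar[d]^{\L\iota^*}_{\sim} \\
\bRigDA_{\et}(K,\La)\ar[r]^{\Phi} & \bRigDA_{\et}(K^\flat,\La)
}
\eeqn
and check that it commutes up to a canonical natural isomorphism. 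The vertical arrows are equivalences by Theorem \ref{thm:rigmain}, so commutativity of the square will yield the bijectivity of \eqref{eq:mainbij} in the torsion case and finish the proof of the equivalence. Commutativity of the square is the main obstacle: one needs to compare, at the level of torsion étale sheaves, the functor $\R\Perf_*\L\flat^*\R j_*\L\iota^*$ with the tilting equivalence of small étale sites. For this I would argue step by step: on Artin (i.e.\ small-étale) objects the functor $\L\iota^*$ into $\wRigDA_{\et}(K,\La)$ is fully faithful (by the same proof as Proposition \ref{prop:emb_eff}), the functor $\R j_*$ reduces on Artin-type objects to the identification between étale sheaves on the semi-perfectoid and perfectoid sites coming from the perfectoidisation, and $\L\flat^*$ restricted to Artin motives recovers Scholze's tilting equivalence by construction. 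Finally $\R\Perf_*$ on the characteristic-$p$ side is an equivalence on Artin motives because perfection is a universal homeomorphism, hence induces an equivalence of small étale sites; alternatively one uses \eqref{eq:Perf} together with the analogue of Proposition \ref{prop:emb_eff}.

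The last statement about compatibility with $\ell$-adic realization is then formal: tensoring by $\Z/\ell^s$ and invoking Theorem \ref{thm:etre}\eqref{etre}, the realization $\fR_{K,\ell}$ factors through the equivalence of Theorem \ref{thm:rigmain}, and one applies the commutativity of the square above level by level (in $s$) before passing to the $\ell$-adic limit. The compatibility with the monoidal structure and with $f_*,f^*$ established in Theorem \ref{thm:etre} ensures the whole construction is functorial, yielding the desired commutative square of realization functors.
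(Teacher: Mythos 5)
The central reduction step in your argument does not work, and this is exactly the difficulty the paper's proof is designed to circumvent. You assert that $\Phi=\R\Perf_*\L\flat^*\R j_*\L\iota^*$ carries a set of compact generators of $\bRigDA_{\et}(K,\La)$ to compact generators of $\bRigDA_{\et}(K^\flat,\La)$, and you later rely on ``$\Phi M$ is compact'' to pull the colimit $\Q/\Z[1/p]\cong\colim\Z/\ell^k$ out of $\Hom(\Phi M,\Phi(-))$. But $\R j_*$ is a right adjoint applied termwise to spectra: there is no reason for $\R j_*$ of a representable (hence compact) object to be compact, and the paper explicitly flags this (``We cannot argue as in the previous step since it is not clear that the functor $F$ sends compact objects to compact objects''). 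Commutation with small sums, which you do invoke and which does hold by the spectrum-level description of $\R j_*$, gives you that $\Phi$ preserves homotopy colimits — but it does not give compactness of $\Phi M$, so the Hom-group manipulation that would reduce you to the $\Q$ and $\Z/\ell^k$ cases breaks down. This invalidates the appeal to \cite[Lemme 1.3.32]{ayoub-rig} as well, since that lemma requires the functor to send compact generators to compact generators.

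The paper instead proceeds as follows: it first peels off $\R\Perf_*$ (whose left adjoint does send compact generators to compact generators, so the Corollaire-B.3-style argument applies there), then handles $F=\R j_*\L\iota^*$ by a different route. Since $F$ commutes with small sums it has, by Brown representability, a right adjoint $G$. One then shows that \emph{both} $F$ and $G$ commute with the change-of-coefficients functors $\rho^*_\Q$ and $\rho^*_\ell$ — this is the technical core, using the conservativity of $\{\rho^*_\Q,\rho^*_\ell\}$, the identification $\rho_{\ell*}\rho^*_\ell \cong (-)\otimes\Z/\ell$ as a cone (hence preserved by any triangulated functor), and the compactness criterion for $(-)\otimes\Q$. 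The conclusion $FG\cong\id\cong GF$ then follows by checking after $\rho_\Q^*$ and $\rho^*_\ell$, where the equivalence is known from $\Q$-coefficients and from Rigidity respectively. Your sketch of the torsion case (identifying $\Phi$ with Scholze's equivalence via a commuting square of Rigidity functors) is in the right spirit and would also need to appear in a fleshed-out version of the paper's Step 4, but it cannot replace the adjoint-plus-conservativity device that makes the reduction to the two coefficient cases legitimate. You would need to rework the proof around Brown representability before your torsion-case analysis and the realization-compatibility argument (which is otherwise sound) can be slotted in.
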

\begin{proof}The proof is divided in several steps.%

Step 1.	We first claim that the perfection functor \eqref{eq:Perf} is an equivalence. Indeed, the functor $\L\Perf^*$ sends a set of compact generators to a set of compact generators (see \cite[Proposition 3.30]{vezz-fw}). We can therefore argue like in the proof of Theorem \ref{thm:comparison} (see also \cite[Corollaire B.3]{ayoub-etale}) and consider separately the case $\Lambda=\Q$ and $\Lambda=\Z/\ell^N$ with $\ell$ prime different from $p$. The first case is known to be invertible (see \cite[Theorem 6.9]{vezz-fw}). We can assume now that $\Lambda$ is a torsion ring. The isomorphism $\Lambda(1)\cong\mu_{S, \ell^N}$ holds in $\bRigDA_{\et}(K^\flat,\Lambda)\cong\catD(K^\flat_{\et},\Lambda)$ and $\L\Perf^*$ induces the analogue isomorphism in $\bPerfDA_{\et}(K^\flat,\Lambda)$. As in the case of Theorem \ref{thm:embedding_DA}, one can show that $\catD(K_{\et}^\flat,\Lambda)\ra \bPerfDA_{\et}(K^\flat,\Lambda)$ is fully faithful, proving the claim.

Step 2. We now assume $\car K=0$ and we prove that $F\colonequals \R j_*\L\iota^*$ is also an equivalence. We cannot argue as in the previous step since it is not clear that the functor $F$ sends compact objects to compact objects. On the other hand, we remark that it commutes with small sums (see \cite[Remark 7.23]{vezz-fw}) 
 so that, by Brown's representability theorem, it has a triangulated right adjoint $G$. As in the proof of Theorem \ref{thm:comparison} we may and do suppose that $\La=\Z[1/p]$.  We use the same letters $F$ and $G$ to indicate these functors defined when $\Lambda=\Q$ and $\Lambda=\Z/\ell$ in which cases we know they are equivalences of categories (by \cite[Theorem 7.10]{vezz-fw} and by Rigidity together with Step 1, respectively).

Step 3. By (the obvious analogues of) \cite[Proposition 5.4.3, Paragraph 5.4.4, Proposition 5.4.12]{CD-etale} we know the following facts:
\begin{enumerate}
\item\label{cons} Each (Quillen) functor induced by  change of coefficients $\rho_\Q^*\colon\bRigDA_{\et}(K,\Z[1/p])\ra\bRigDA_{\et}(K,\Q)$ and $\rho_\ell^*\colon\bRigDA_{\et}(K,\Z[1/p])\ra\bRigDA_{\et}(K,\Z/\ell)$ with $\ell\neq p$ prime  has a conservative right adjoint $\rho_{\Q*} $ resp. $\rho_{\ell*} $ and the set $\{\rho^*_\Q,\rho^*_\ell\}_{\ell\neq p}$ is a conservative family.
\item \label{tens}$\rho_{\ell*} \rho^*_{\ell}M\cong M\otimes_{\Z} \Z/l$ and $\rho_{\Q*} \rho^*_{\Q}M\cong M\otimes_{\Z} \Q$.
\item \label{ell}$ M\otimes_{\Z} \Z/l\cong Cone(M\stackrel{\times\ell}{\ra}M)$. 
\item\label{Q} If $A$ is compact, then $\Hom(A,M\otimes\Q)\cong\Hom(A,M)\otimes\Q$.
\end{enumerate}We now prove that $F$ and $G$ commute with the functors of  change of coefficients  $\rho^*$ (in both cases $\Q$ or $\Z/\ell$).  From the conservativity stated in \eqref{cons} one deduces the following: if $G$ commutes with $\rho_{*}$ (which amounts to say that $F$ commutes with $\rho^*$)  then it commutes with $\rho^*$ if and only if it commutes with $\rho_*\rho^*$ which is $-\otimes\Q$ or $-\otimes\Z/\ell$ respectively (property \eqref{tens}). 
From the property \eqref{ell} any triangulated additive functor (like $G$) commutes with $-\otimes\Z/\ell$. Also, in order to show $G(S\otimes\Q)\cong GS\otimes \Q$ it suffices to check that for any compact object $A$ one has $\Hom(A,G(S\otimes\Q))\cong \Hom(A,GS\otimes \Q)$ and this follows from adjunction and the property \eqref{Q}. It then suffices to show that $F$ commutes with $\rho^*$. We recall that $F$ is the composition of $\L\iota^*$ (which obviously commutes with $\rho^*$) and  $\R j_*$ which is termwise defined by means of $j_*$ which is the  effective Quillen right adjoint induced by the inclusion of perfectoid spaces in semi-perfectoid ones. We are left to prove that this functor commutes with $\rho^*$. Since its left adjoint obviously does, we deduce the claim by the adjunction arguments above. 

Step 4. We can finally prove the statement of the theorem. Fix $S$ in $\bRigDA_{\et}(K,\Z[1/p])$. We want to prove that $FGS\cong S\cong GFS$. By the property \eqref{cons} it suffices to show this after applying $\rho_\Q^*$ and $\rho_{\ell}^*$. Since $F$ and $G$ commute with them as shown in Step 3, the claim follows from \cite[Theorem 7.10]{vezz-fw} and Rigidity paired up with Step 1. We remark that along the proof we have shown the compatibility of $F$ with the functor $\rho^*$ over $\Z/\ell^N$. By construction, this is enough to prove that $F$ is compatible with the $\ell$-adic realization functors.
\end{proof}

\begin{rmk}Let $\La$ be a $\Z[\frac{1}{p}]$-algebra, where $p$ is the residual exponential characteristic of $K$. We remark that there is also an \emph{effective} version of the motivic tilting equivalence with transfers (see \cite{vezz-fw}). Once it is paired up with the Cancellation Theorem \cite[Théorème 2.5.38]{ayoub-rig} and Rigidity, it enables one to  prove that the equivalence of Theorem \ref{thm:perfectoid_comparison} (which can also be stated for $\bRigDM$ using Theorem \ref{thm:comparison}) restricts to an equivalence:	\[ \bRigDM^{\eff}_{\et}(K,\Lambda) \cong \bRigDM^{\eff}_{\et}(K^\flat,\Lambda)\]
which	is the effective version of the previous theorem (with transfers).
\end{rmk}

We recall that the fields $K$ and $K^\flat$ have the same residue field $k$. In particular, for both of them we have a canonical Quillen adjunction 
$$
\xi\colon\bDA_{\et}(k,\La)\leftrightarrows \bRigDA_{\et}(K,\La)\colon\chi
$$
$$
\xi^\flat\colon\bDA_{\et}(k,\La)\leftrightarrows \bRigDA_{\et}(K^\flat,\La)\colon\chi^\flat
$$
It is obtained in the following way: first, we can consider the following equivalence, induced by the special fiber functor $\fX\mapsto \fX_k$ (see \cite[Corollaire 1.4.24]{ayoub-rig}):
$$
\bDA_{\et}(k,\La)\cong\bFormDA_{\et}(\cO_K,\La)
$$
where we let $\bFormDA_{\et}(K,\La)$ be the category of \'etale motives of smooth formal schemes topologically of finite type over $\cO_K$. Then, we can consider the (Quillen) adjunction
$$
\L(-)^*_\eta\colon\bFormDA_{\et}(\cO_K,\La)\leftrightarrows \bRigDA_{\et}(K,\La)\colon\R(-)_{\eta*}
$$
induced by generic fiber functor $\fX\mapsto \fX_\eta$. In particular $\xi(\Lambda(\fX_k))\cong\Lambda(\fX_\eta)$ for any smooth formal scheme topologically of finite type $\fX$ over $\cO_K$.

In \cite{vezz-tilt4rig} it is shown that these functors commute with the motivic tilting equivalence, whenever $\Q\subset\La$. With Rigidity, we can now complement this result for any $\La$ where $p$ is invertible.

\begin{cor}\label{cor:tiltandxi}
	Let $K$ be a perfectoid field of residual  characteristic $p$ and let $\La$ be a $\Z[1/p]$-algebra. Up to a natural transformation, the following diagram of monoidal DG-categories 	is commutative.
	\[
	\xymatrix{
		&\bDA_{\et}(k,\La)\ar[dl]_{\xi}\ar[dr]^{\xi^\flat}\\
		\bRigDA_{\et}(K,\La) %
			\ar@{<->}[rr]^{\sim}&&\bRigDA_{\et}(K^\flat) 
		}
		\] 
	\end{cor}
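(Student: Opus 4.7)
The plan is to mimic the proof pattern of Theorems~\ref{thm:comparison} and~\ref{thm:perfectoid_comparison}: first recall the candidate natural transformation, then reduce to $\La = \Z[1/p]$, and finally check the commutativity after tensoring with $\Q$ and with $\Z/\ell$ for all primes $\ell \neq p$ separately, using the conservative family $\{\rho_\Q^*, \rho_\ell^*\}_{\ell \neq p}$ of change-of-coefficients functors recalled in Step~3 of the proof of Theorem~\ref{thm:perfectoid_comparison}. A natural transformation fitting in the triangle is produced in \cite{vezz-tilt4rig}, where the statement is also proven whenever $\Q \subset \La$; the task is therefore to upgrade this to the case $\La$ a $\Z[1/p]$-algebra.

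For the reduction step, one verifies that all three functors appearing in the diagram are compatible with change of coefficients $\rho^*$. For the tilting equivalence of Theorem~\ref{thm:perfectoid_comparison} this was explicitly checked in Step~3 of its proof. For $\xi$ and $\xi^\flat$ it is immediate from the construction, since each of them is defined as a composition of a left Quillen functor (the generic fiber functor) with the equivalence coming from the special fiber, both of which visibly commute with change of coefficients. By the conservativity of $\{\rho_\Q^*, \rho_\ell^*\}_{\ell \neq p}$, it therefore suffices to check the commutativity after tensoring with $\Q$ and with $\Z/\ell$ for each prime $\ell\neq p$.

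The case $\La = \Q$ is then the content of \cite{vezz-tilt4rig}. For $\La = \Z/\ell$, we invoke Rigidity: by algebraic Rigidity \cite{ayoub-etale} and Theorem~\ref{thm:rigmain}, the three motivic categories appearing in the diagram are equivalent to $\bD(k_{\et},\La)$, $\bD(K_{\et},\La)$ and $\bD(K^\flat_{\et},\La)$ respectively. Under these equivalences, the functor $\xi$ identifies with the composition
\[ \bD(k_{\et},\La) \xleftarrow{\sim} \bD(\cO_{K,\et},\La) \longrightarrow \bD(K_{\et},\La), \]
\ie the classical specialization pullback arising from the fact that $\cO_K$ is a henselian valuation ring with residue field $k$ (the first arrow being an equivalence for torsion $\La$ prime to the residue characteristic); an analogous description holds for $\xi^\flat$. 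The motivic tilting equivalence, by its compatibility with the $\ell$-adic realization asserted in Theorem~\ref{thm:perfectoid_comparison}, corresponds under Rigidity to Scholze's small-étale tilting equivalence $\bD(K_{\et},\La) \cong \bD(K^\flat_{\et},\La)$ of Theorem~\ref{thm:scholze}. The desired commutativity then reduces to the classical fact that Scholze's equivalence is compatible with specialization to the common residue field $k$.

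The main subtlety will be the precise identification of $\xi$, $\xi^\flat$ and the motivic tilting equivalence with their small-étale-site counterparts under Rigidity; once these identifications are made natural (rather than just on objects), the commutativity of the diagram follows formally.
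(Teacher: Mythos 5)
Your overall strategy matches the paper's almost exactly: reduce to $\La = \Q$ and $\La = \Z/\ell$ via the conservative family of change-of-coefficients functors, dispose of the $\Q$-case via \cite{vezz-tilt4rig}, and handle $\Z/\ell$ via Rigidity. The paper's proof, however, is more explicit about the point you yourself flag as "the main subtlety." Rather than invoking a natural transformation wholesale from \cite{vezz-tilt4rig}, the paper constructs the comparison map $\alpha\circ\beta\colon\xi^\flat\Rightarrow\R\Perf_*\circ\L\flat^*\circ\R j_*\circ\L\iota^*\circ\xi$ from scratch by decomposing the tilting equivalence into its constituent functors ($\R\Perf_*$, $\L\flat^*$, $\R j_*$, $\L\iota^*$) and building natural transformations one square at a time, starting from transformations at the site level (e.g.\ the one induced by the projection $X^\perf\to X$). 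This construction is made with no hypotheses on $\La$, which is exactly what guarantees that a candidate natural transformation exists to test. Your proposal is slightly circular on this point: you propose to prove compatibility of the three functors with $\rho^*$ and then "reduce" the commutativity, but this only gives something to reduce if you already have a globally-defined transformation; a transformation defined only over $\Q$ does not descend to $\Z[1/p]$. Modulo supplying this explicit construction, your reduction and your identification of the $\Z/\ell$-case with Scholze's small \'etale tilting of Artin motives are precisely what the paper does.
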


\begin{proof}
We can assume $\car K=0$ and we follow the notation and the proof of \cite[Theorem 3.6]{vezz-tilt4rig}. In particular we use the notion of perfectoid space over $k$ introduced in Section 2 of \cite{vezz-tilt4rig} and the relative category of motives.
With no hypotheses on $\La$, one can define a natural transformation $ \xi\circ\R j_*\circ\L\iota^*\Rightarrow\R j_*\circ\L\iota^*\circ\xi$ between the two sides of the following square
	$$\xymatrix{
		\bDA_{\et}(k,\Lambda)\ar[d]_{\xi}\ar[r]^-{\R j_*\L\iota^*}&	\bPerfDA(k,\Lambda)\ar[d]_{\xi}\\
		\bRigDA_{\et}(K,\Lambda)	\ar[r]^-{\R j_*\L\iota^*}	&\bPerfDA(K,\Lambda)\\	
	}
	$$
	and an invertible natural transformation $ \xi\circ\L\Perf^*\cong\R\flat_*\circ\L\Perf^*\circ\xi^\flat$ between the two sides of the following square
			$$\xymatrix{
		\bDA_{\et}(k,\Lambda)\ar[d]_{\xi^\flat}\ar[rr]^-{\L\Perf^*}& &\bPerfDA(k,\Lambda)\ar[d]_{\xi}\\
		\bRigDA_{\et}(K^\flat,\Lambda)	\ar[r]^-{\L\Perf^*}&	\bPerfDA(K^\flat,\Lambda) \ar[r]^-{\R\flat_*}_{\sim}& \bPerfDA(K,\Lambda)\\	
	}
	$$
	inducing a natural transformation
	$$
	\xi^\flat\circ\R\Perf_*\Rightarrow \R\Perf_*\circ\L\flat^*\circ\xi.
	$$
	We therefore obtain a natural transformation $$\alpha\colon\xi^\flat\circ\R\Perf_*\circ\R j_*\circ\L\iota^*\Rightarrow  \R\Perf_*\circ\L\flat^*\circ\R j_*\circ\L\iota^*\circ\xi.$$ It can be pre-composed with the natural transformation $$\beta\colon\xi^\flat\Rightarrow\xi^\flat\circ \R\Perf_*\circ\R j_*\circ\L\iota^*$$ obtained from $\L j^*\circ\L \Perf^*\Rightarrow\L \iota^*$ (which is induced by the canonical projection to $X$  from the perfection $X^{\Perf}$). We recall that the tilting equivalence is given from left to right by the functor $ \R\Perf_*\circ\L\flat^*\circ\R j_*\circ\L\iota^*$. 
	It suffices then to show that $$\alpha\circ\beta\colon\xi^\flat\Rightarrow\R\Perf_*\circ\L\flat^*\circ\R j_*\circ\L\iota^*\circ\xi$$ is invertible. Using the same strategy as the proof of Theorem \ref{thm:perfectoid_comparison}  we can consider separately the case $\La=\Q$ and the case $\La=\Z/\ell$. The first case is dealt with in \cite{vezz-tilt4rig} while for the second we can invoke the Rigidity Theorem, and the fact that in this case $  \R\Perf_*\circ\L\flat^*\circ\R j_*\circ\L\iota^*$ is just  the tilting of Artin motives $\bD(K_{\et},\La)\cong\bD(K^\flat_{\et},\La)$.%
\end{proof}

\appendix
\section{Rigidity for  rigid motives with transfers over $K$} \label{sec:thm_transfers}

In this appendix, we present an alternative, more ``geometric'' proof of the Rigidity Theorem for $\bRigDM_{\et}(K,\Lambda)$ with $\Lambda=\Z/\ell^N$, where we write as usual $\bRigDM_{\et}(K,\Lambda)$ for $\bRigDM_{\et}(\Sp K,\Lambda)$. This proof  has the advantage of being independent on the full   algebraic Rigidity Theorem in its relative form, but relies only on its version over fields for $\bDM_{\et}(K,\La)$ \cite[Corollary 4.8, Theorem 9.35]{mvw} and for $\bDA_{\et}(K,\La)$ \cite[Lemme 4.6]{ayoub-etale}.

We remark that the following proof is just an adaptation of a proof by Ayoub \cite[Théorème 2.5.34]{ayoub-rig} mixed with the results on $\ell'$-alterations of Gabber and Temkin. We will  heavily refer to \cite[Théorème 2.5.34]{ayoub-rig} and only explain the points where the argument needs to be adapted to our situation. 

As usual, we let $K$ be a complete non-Archimedean valued field, we suppose that $\ell$ is a prime which is invertible in the residue field of $K$ and that $K$ has a finite $\ell$-cohomological dimension. From now on, we let $\Lambda$ be $\Z/\ell^N$ for some $N\in\N_{>0}$.

\begin{thm} \label{thm:rigidity_DM} 
	The functor $\L\iota^* \colon\bD(K_{\et},\Lambda)\ra\bRigDM_{\et}(K,\Lambda)$ is an equivalence of categories.
\end{thm}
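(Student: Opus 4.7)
The plan is to deduce essential surjectivity of $\L\iota^*$ by combining Temkin's $\ell'$-alteration theorem with the absolute algebraic Rigidity Theorem with transfers over the residue field, thereby adapting the strategy of \cite[Théorème 2.5.34]{ayoub-rig} while avoiding any use of the full \emph{relative} Rigidity Theorem for schemes. By Theorem \ref{thm:embDM} the functor $\L\iota^*$ is already fully faithful, so only essential surjectivity remains. Since $\bRigDM_{\et}(K,\Lambda)$ is compactly generated by motives of the form $\Lambda^{\tr}_K(X)$ with $X$ smooth affinoid over $K$ (Proposition \ref{prop:cptgen}), I would reduce to showing that each such generator lies in the localizing subcategory of Artin motives with transfers.

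The geometric input is Temkin's $\ell'$-alteration theorem: up to replacing $K$ by a finite separable extension $L$ of degree prime to $\ell$, one can produce a proper surjective morphism $f \colon X' \to X_L$ of generic degree prime to $\ell$, where $X'$ admits a strictly semi-stable formal model $\fX'$ over $\cO_L$. The transfer associated to $f$ in $\Cor_L$, composed with $f$ itself, equals multiplication by $\deg(f)$ on $\Lambda^{\tr}_L(X_L)$, which is a unit in $\Lambda=\Z/\ell^N$. Hence $\Lambda^{\tr}_L(X_L)$ is a direct summand of $\Lambda^{\tr}_L(X')$. A parallel trace argument for the finite étale extension $L/K$ (whose degree is likewise invertible in $\Lambda$) then shows that $\Lambda^{\tr}_K(X)$ is a retract of the pushforward of $\Lambda^{\tr}_L(X_L)$, reducing the problem to proving that $\Lambda^{\tr}_L(X')$ is an Artin motive.

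In this reduced setting, I would descend along the stratification of the special fiber $\fX'_s$: for a strictly semi-stable formal model each stratum is smooth, and the motive of $\fX'_\eta$ is realized as the homotopy colimit of a \v{C}ech-type simplicial object built from these strata. This further reduces to the case of a smooth proper formal model $\fX''$ over $\cO_L$, where the zig-zag of functors
$$
\bDM_{\et}(k_L,\Lambda) \;\cong\; \bFormDM_{\et}(\cO_L,\Lambda) \;\longrightarrow\; \bRigDM_{\et}(L,\Lambda)
$$
realizes $\Lambda^{\tr}_L(\fX''_\eta)$ as the image of an algebraic motive over the residue field $k_L$. The first equivalence above rests only on topological invariance of the étale site and $\B^1$-homotopy (see \cite[Corollaire 1.4.24]{ayoub-rig}) and does \emph{not} appeal to relative Rigidity. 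The classical absolute Rigidity Theorem with transfers \cite[Theorem 9.35]{mvw} then identifies $\bDM_{\et}(k_L,\Lambda)$ with $\bD((k_L)_{\et},\Lambda)$, producing the desired Artin description, which is transported back to $\bRigDM_{\et}(K,\Lambda)$ along the chain of functors above.

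The hard part will be two compatibility checks. First, I need the special/generic fibre comparison between $\bFormDM$ and $\bRigDM$ to be genuinely compatible with transfers, which is delicate because the definition of $\Cor_S$ involves the $\fh$-topology on normal schemes (Definition \ref{defn:rigid_correspondence}) rather than only smooth varieties. Second, the descent from strictly semi-stable to smooth models must be carried out inside the framework of motives with transfers, identifying $\Lambda^{\tr}_L(\fX'_\eta)$ with a homotopy colimit indexed by intersections of irreducible components of $\fX'_s$ in a way that respects correspondences. Once these two compatibilities are established, the transfer-theoretic retract arguments arising from Temkin's alterations together with absolute algebraic Rigidity complete the proof.
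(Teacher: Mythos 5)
Your overall strategy — fully faithfulness from Theorem \ref{thm:embDM}, reduction to generators, Temkin $\ell'$-alterations plus trace tricks, descent to smooth formal models, then algebraic Rigidity over the residue field — is the same as the paper's. Two points deserve attention, though.

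First, there is a gap in the alteration step. For a \emph{proper surjective} morphism $f\colon X'\to X_L$ that is only \emph{generically} finite of degree prime to $\ell$, the transpose cycle ${}^t\Gamma_f\subset X_L\times X'$ is not finite over $X_L$, hence $f$ does not automatically give a correspondence in $\Cor_L$, and the identity $f\circ f^{\tr}=\deg(f)\cdot\id$ has no direct meaning. The retract argument is valid only when $f$ is finite, and the paper (following Ayoub's proof of \cite[Th\'eor\`eme 2.5.34]{ayoub-rig}) is careful about exactly this: it invokes Temkin's theorem to get an alteration of degree $d$ coprime to $\ell$ \emph{over the dense open where it is finite}, applies the trace there, and then uses the Verdier localization of $\bRigDM_{\et}$ at Nisnevich squares to deal with the boundary by induction. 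You need to incorporate this localization step; as written, your reduction to $\Lambda^{\tr}_L(X')$ is not justified.

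Second, you correctly flag the need for a transfer-compatible comparison between $\bFormDM_{\et}(\cO_L,\Lambda)$ and $\bRigDM_{\et}(L,\Lambda)$, and you are right that this is delicate — but the paper deliberately \emph{avoids} ever introducing $\bFormDM$. In Lemma \ref{lemma:grisart} the argument is carried out entirely in the transfer-\emph{free} categories $\bFormDA_{\et}(\cO_K,\La)\cong\bDA_{\et}(k,\La)$ and $\bRigDA_{\et}(K,\La)$, using \cite[Lemme 4.6]{ayoub-etale} (absolute algebraic Rigidity without transfers over $k$) to see $\Lambda(X)$ is Artin in $\bRigDA_{\et}(K,\La)$; one then passes to $\bRigDM_{\et}(K,\La)$ by applying $\L a_{\tr}$, which preserves Artin motives since $\L\iota^*$ factors through it monoidally. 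This detour sidesteps both of the compatibility checks you list as ``the hard part'' — defining transfers for formal schemes and making the special/generic fibre functors compatible with them — and it is worth adopting rather than trying to set up $\bFormDM$ from scratch. Your proposed \v{C}ech descent along the strata of $\fX'_s$ is also not how Ayoub's Step 3 proceeds (he uses an induction on the depth of the poly-stable structure via Nisnevich squares, not a hypercover by tubes of the irreducible components), and it is not clear your version goes through in $\bRigDM_{\et}$; simply following Ayoub's argument verbatim, as the paper does, is safer.
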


\begin{proof} Rigidity for $\bDM_{\et}(K,\La)$ implies Proposition \ref{prop:iso_muDM} in the case $S=\Spa K$ using the derived analytification functor $\L\An^*$. We then know that $\L\iota^*$ is fully faithful (\cf Theorem \ref{thm:embDM}). In order to conclude the theorem, it suffices to show that  a motive of the form $\Lambda^{\tr}(X)$ is Artin, for any given smooth rigid variety $X$ over $K$.

 We remark that motives which are potentially of good reduction (see the definition before \cite[Théorème 2.5.34]{ayoub-rig}) are Artin: this follows  by combining Lemma \ref{lemma:fin_ext} with Lemma \ref{lemma:grisart}. It then suffices to show that $\Lambda^{\tr}(X)$ is in the triangulated category with small sums generated by motives which are potentially of good reduction.

To this aim, it suffices to follow verbatim the proof of  \cite[Théorème 2.5.34]{ayoub-rig} with the following slight changes.

- By Lemmas \ref{lemma:p_extension} and \ref{lemma:fin_ext} a motive is Artin if and only if its base change over a finite extension $K'/K$ is. This replaces the first step of \cite[Théorème 2.5.34]{ayoub-rig}.

- The category $\bRigDM_{\et}(K,\La)$ is a Verdier localization of $\bRigDM_{\mathrm{Nis}}(K,\La)$. In particular, Nisnevich weak equivalences are \'etale weak equivalences and Nisnevich squares give rise to exact triangles also in $\bRigDM_{\et}(K,\La)$ (see \cite[Th\'eor\`eme 2.5.12]{ayoub-rig}).

- Artin motives are closed under tensor products, as $\L\iota^*$ is a monoidal functor. 

- In the third step of the proof of \lc one can require that the alteration $e\colon Y\to X$ (following the notation in \lc) has degree $d$ which is coprime to $\ell$ on the dense open sets  where it is finite, using  \cite[Theorem 5.2.18]{temkinetal} in place of the alteration proved in \cite{berk-sm} and using Lemma \ref{lemma:fin_ext}. We remark that if $e'\colon Y\to X$ is a finite morphism of analytic varieties of degree $d$ coprime to $\ell$, then the motive $\Lambda^{\tr}(X)$ is a direct summand of $\Lambda^{\tr}(Y)$ since $e'\circ e'^{\tr}=d\cdot\id$  and $d\in\La^*$, where $e'^{\tr}$ is the transpose of $e'$ lying in $\Cor(X,Y)$. In particular, $\Lambda^{\tr}(X)$ is Artin if $\Lambda^{\tr}(Y)$ is. {Then, the fact that $\Lambda^{\tr}(Y)$ is an Artin motive when $Y$ has poly-stable reduction can be proved in the same way as it is proved in the third step of the proof of \cite[Théorème 2.5.34]{ayoub-rig}.}
\end{proof}

The following lemmas were used in the previous proof.

\begin{lemma}\label{lemma:grisart}
Let $X$ be the generic fiber of a smooth formal scheme over $\cO_K$. The motive $\Lambda^{\tr}(X) \in \bRigDM_{\et}(K,\Lambda)$   is an Artin motive.
\end{lemma}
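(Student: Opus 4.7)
The plan is to follow the strategy already used in the proof of Theorem \ref{thm:rigmain}: transport $\Lambda^{\tr}(X)$ to the special fiber via a smooth formal model, apply the algebraic Rigidity Theorem over the residue field $k$, and descend back through the generic fiber functor. Concretely, fix a smooth formal $\mathcal{O}_K$-model $\mathfrak{X}$ of $X$ and work with the category $\bFormDM_{\et}(\mathcal{O}_K,\Lambda)$ of formal motives with transfers over $\mathcal{O}_K$, defined in the natural analogue of \cite[Section 1.4.2]{ayoub-rig}.

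The key input is an equivalence
\[ (-)_\sigma\colon \bFormDM_{\et}(\mathcal{O}_K,\Lambda) \xrightarrow{\sim} \bDM_{\et}(k,\Lambda) \]
induced by the special fiber functor, provable by adapting \cite[Corollaire 1.4.23]{ayoub-rig} to the with-transfers setting. Under it, $\Lambda^{\tr}(\mathfrak{X})$ corresponds to $\Lambda^{\tr}(\mathfrak{X}_\sigma)$. Algebraic Rigidity for $\bDM_{\et}(k,\Lambda)$ then supplies an object $F \in \bD(k_{\et},\Lambda)$ with $\L\iota^*(F) \cong \Lambda^{\tr}(\mathfrak{X}_\sigma)$, and the canonical equivalences of small \'etale topoi
\[ \bD(k_{\et},\Lambda) \cong \bD((\Spec\mathcal{O}_K)_{\et},\Lambda) \cong \bD(K_{\et},\Lambda) \]
(the first from \cite[Th\'eor\`eme 18.1.2]{EGAIV4}, the second because the henselian property of $\mathcal{O}_K$ makes rigid \'etale covers of $\Spa K$ extend uniquely to formally \'etale covers of $\mathfrak{X}$) produce an object $F' \in \bD(K_{\et},\Lambda)$ together with an isomorphism $\L\iota^*(F') \cong \Lambda^{\tr}(\mathfrak{X})$ in $\bFormDM_{\et}(\mathcal{O}_K,\Lambda)$. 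Applying the generic fiber functor $(-)_\eta\colon \bFormDM_{\et}(\mathcal{O}_K,\Lambda) \to \bRigDM_{\et}(K,\Lambda)$ yields $\L\iota^*(F') \cong \Lambda^{\tr}(X)$, exhibiting $\Lambda^{\tr}(X)$ as an Artin motive.

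The hard part will be verifying the compatibility of $\L\iota^*$ with the generic fiber functor, namely the commutativity of the square
\[
\xymatrix{
\bD((\Spec\mathcal{O}_K)_{\et},\Lambda) \ar[r]^-{\sim} \ar[d]_-{\L\iota^*} & \bD(K_{\et},\Lambda) \ar[d]^-{\L\iota^*} \\
\bFormDM_{\et}(\mathcal{O}_K,\Lambda) \ar[r]^-{(-)_\eta} & \bRigDM_{\et}(K,\Lambda).
}
\]
Both legs send the representable \'etale $\Lambda$-sheaf associated to an \'etale cover $U \to \Spec\mathcal{O}_K$ to the motive of its generic fiber $U_\eta \to \Spa K$, so the commutativity should reduce, via the universal properties of the categories of motives, to a check on generators that parallels the (transfer-free) compatibility implicitly used in the proof of Theorem \ref{thm:rigmain}.
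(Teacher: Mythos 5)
The proposal follows the same broad strategy as the paper (reduction via a formal model to the special fiber and algebraic Rigidity over $k$), but it contains a genuine error and one unnecessary dependency, which the paper's proof specifically avoids.

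The claimed equivalence $\bD((\Spec\cO_K)_{\et},\Lambda)\cong\bD(K_{\et},\Lambda)$ is false. The henselian property of $\cO_K$ gives $\bD(k_{\et},\Lambda)\cong\bD((\Spec\cO_K)_{\et},\Lambda)$, but the second comparison does not hold: a finite separable extension $L/K$ gives an \'etale cover $\Spa L\to\Spa K$, and this extends to a (formally) \'etale cover of $\Spf\cO_K$ only when $L/K$ is unramified. For instance $\Q_p(\sqrt{p})/\Q_p$ is \'etale over $\Spa\Q_p$ but not over $\Spec\Z_p$. Taken at face value, your argument would imply that every \'etale $\Lambda$-sheaf on $\Spa K$ is unramified, which is wrong whenever $K$ is nontrivially valued. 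What does exist is a pullback functor $\bD((\Spec\cO_K)_{\et},\Lambda)\to\bD(K_{\et},\Lambda)$, which is all that is needed here, since an ``Artin motive'' by Definition \ref{rmk:functors_LIDA_LIDM} only has to lie in the triangulated subcategory generated by the essential image of $\L\iota^*$, not literally in that image. You should downgrade the equivalence to a functor, and correspondingly weaken your conclusion from ``$\Lambda^{\tr}(X)\cong\L\iota^*(F')$'' to ``$\Lambda^{\tr}(X)$ lies in the subcategory generated by $\L\iota^*$''. (Your intermediate statement ``$\L\iota^*(F')\cong\Lambda^{\tr}(\fX)$ in $\bFormDM_{\et}(\cO_K,\Lambda)$'' also does not type-check, since $F'$ lives over $K$ and $\L\iota^*$ from $\bD(K_{\et},\Lambda)$ lands in $\bRigDM_{\et}(K,\Lambda)$, not in a formal category.)

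Secondly, you set up a with-transfers formal category $\bFormDM_{\et}(\cO_K,\Lambda)$ and a with-transfers analogue of \cite[Corollaire 1.4.23]{ayoub-rig}, neither of which exists in the literature; establishing them is real work. The paper sidesteps this: it carries out the argument entirely in $\bRigDA_{\et}$ (where $\bFormDA_{\et}(\cO_K,\Lambda)\cong\bDA_{\et}(k,\Lambda)$ \emph{is} available) and invokes at the end the monoidality of $\L a_{\tr}$, which is compatible with $\L\iota^*$ and sends $\Lambda(X)$ to $\Lambda^{\tr}(X)$, to transfer the conclusion to $\bRigDM_{\et}$. This also has the advantage of not needing the full Rigidity Theorem (essential surjectivity) over $k$: the DA-side argument only uses the weaker statement that $\bDA_{\et}(k,\Lambda)$ is \emph{generated} by Artin motives \cite[Lemme 4.6]{ayoub-etale} — more in the spirit of the appendix, which is explicitly designed to minimize the Rigidity input.
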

\begin{proof}We follow the notation introduced before Corollary \ref{cor:tiltandxi}. 
By hypothesis the motive $\Lambda(X)\in\bRigDA_{\et}(K,\La)$ lies in the image of the functor $ \mathbb{L}(-)^*_\eta\colon \bFormDA_{\et}(\cO_K,\La)\to\bRigDA_{\et}(K,\La)$. We remark that the category $\bFormDA_{\et}(\cO_K,\La)\cong\bDA_{\et}(k,\La)$ is generated by  Artin motives as a consequence of \cite[Lemme 4.6]{ayoub-etale}. Therefore, by the commutativity of the diagram
\[
\begin{tikzpicture}
\matrix(m)[matrix of math nodes,
row sep=1.5em, column sep=2.8em,
text height=1.5ex, text depth=0.25ex]
{ \bD(k_{\et}, \La) & \bDA_{\et}(k, \La) \\
  \bD(\cO_{K\et}, \La) & \bFormDA_{\et}(\cO_K, \La) \\
  \bRigDA^\art(K,\La) & \bRigDA_{\et}(K, \La) \\};
\path[->,font=\scriptsize]
(m-1-1) edge node[auto] {$\L{\iota}^*$} (m-1-2);
\path[->,font=\scriptsize]
(m-1-1) edge node[auto] {$\xi$} (m-2-1);
\path[->,font=\scriptsize]
(m-1-2) edge node[auto] {$\xi$} (m-2-2);
\path[->,font=\scriptsize]
(m-2-1) edge node[auto] {$\L\iota^*$} (m-2-2);
\path[->,font=\scriptsize]
(m-2-1) edge node[auto] {$\L (-)_\eta$} (m-3-1);
\path[->,font=\scriptsize]
(m-2-2) edge node[auto] {$\L (-)_\eta$} (m-3-2);
\path[->,font=\scriptsize]
(m-3-1) edge node[auto] {$\L\iota^*$} (m-3-2);
\end{tikzpicture}
\]
it  follows that $\Lambda(X)$ lies in the category generated by the image of $\L\iota^*\circ\L(-)_\eta\circ\xi$ which is contained in Artin motives. This shows in particular that $\Lambda^{\tr}(X)$ is an Artin motive.
\end{proof}

\begin{lemma}\label{lemma:p_extension}%
	Let $L/K$ be a finite purely inseparable extension of fields, then there is a commutative diagram of functors 
	\[
	\begin{tikzpicture}
	\matrix(m)[matrix of math nodes,
	row sep=1.5em, column sep=2.8em,
	text height=1.5ex, text depth=0.25ex]
	{ 
		\bD(K_{\et},\La) & \bRigDM_{\et}(K, \Lambda) \\
		\bD(L_{\et},\La) & \bRigDM_{\et}(L, \Lambda) \\};
	\path[->,font=\scriptsize]
	(m-1-1) edge node[auto] {} (m-1-2);
	\path[->,font=\scriptsize]
	(m-1-1) edge node[auto] {} (m-2-1);
	\path[->,font=\scriptsize]
	(m-1-2) edge node[auto] {} (m-2-2);
	\path[->,font=\scriptsize]
	(m-2-1) edge node[auto] {} (m-2-2);
	\end{tikzpicture}
	\]
	where the vertical maps are equivalences and the horizontal are  fully faithful functors.
\end{lemma}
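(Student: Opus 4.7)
The plan is to construct the vertical functors as base change along $f\colon \Sp L \to \Sp K$, check that commutativity of the square is formal, and reduce the equivalence of both vertical arrows to two essentially classical facts about universal homeomorphisms.

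Since $L/K$ is a nontrivial purely inseparable extension, we must have $\car K = p > 0$, and we pick $n \geq 0$ minimal with $L^{p^n} \subset K$. Both vertical functors of the diagram are derived inverse image along $f$: on the left, the usual inverse image on small \'etale sites; on the right, the functor $f^*$ of Theorem \ref{thm:6f1}\eqref{**} adapted to motives with transfers. Commutativity of the square follows from the compatibility of $\L\iota^*$ with \'etale base change, and the horizontal arrows are fully faithful by Theorem \ref{thm:embDM}.

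For the left vertical, $\Spec L \to \Spec K$ is radicial, hence a universal homeomorphism, inducing an equivalence of small \'etale topoi and therefore of their derived categories of $\La$-modules.

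The main content of the lemma is the right vertical equivalence. We construct a partial inverse $g\colon \Sp K \to \Sp L$ from the $p^n$-th power map $\phi\colon L \to K$, $x \mapsto x^{p^n}$, which is a well-defined ring homomorphism (its image lies in $K$ by the choice of $n$, and additivity uses $\car K = p$). Writing $\iota\colon K \hookrightarrow L$ for the inclusion and $F$ for the absolute $p$-power Frobenius, one checks $\iota \circ \phi = F^n_L$ and $\phi \circ \iota = F^n_K$, whence on analytic spectra $f \circ g = F^n_{\Sp K}$ and $g \circ f = F^n_{\Sp L}$. Passing to $\bRigDM_{\et}(-,\La)$ we obtain $g^* f^* \cong (F^n_K)^*$ and $f^* g^* \cong (F^n_L)^*$. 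The main obstacle is therefore to show that pullback along the absolute Frobenius is an equivalence on $\bRigDM_{\et}(-,\La)$. This is an instance of the insensitivity of \'etale motives to universal homeomorphisms when the torsion of $\La$ is coprime to the residue characteristic, and it can be argued exactly as in the proof of Theorem \ref{thm:comparison_ayoub}: the category $\bRigDM_{\et}(K,\La)$ is generated (using \cite[Th\'eor\`eme 2.5.35]{ayoub-rig}) by analytifications of algebraic motives, on which Frobenius is invertible by the separatedness property \cite[Th\'eor\`eme 3.9]{ayoub-etale} of $\bDA_{\et}$ with $\ell$-torsion coefficients. Once this invertibility is in hand, $f^*$ becomes an equivalence with quasi-inverse given, up to the invertible Frobenius twist, by $g^*$.
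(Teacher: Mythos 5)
Your approach is valid in outline but takes a genuinely different and longer route than the paper's. The paper simply invokes \cite[Proposition 2.2.22]{ayoub-rig}, which says that a finite purely inseparable extension $L/K$ induces an equivalence $\Cor(K,\La)\stackrel{\sim}{\to}\Cor(L,\La)$ at the level of the sites of correspondences; the equivalence of $\bRigDM_{\et}$-categories follows immediately. This exploits what is really the defining feature of transfers: radicial finite surjective morphisms become invertible in the $\fh$-topology, with no need to pass to coefficients or invoke any deep property of the motivic categories. Your construction of the explicit partial inverse $g$ (via the $p^n$-th power map) is a nice reformulation, and the reduction to the invertibility of $(F^n)^*$ is formally correct (a functor with isomorphic left and right quasi-inverses is an equivalence), but in the context of $\bRigDM$ you are reproving by hand the content of Ayoub's proposition.

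There is also a citation problem in your final step that should be flagged: you justify the invertibility of Frobenius pullback on $\bRigDM_{\et}(K,\La)$ with $\La=\Z/\ell^N$ by appealing to the separatedness theorem \cite[Th\'eor\`eme 3.9]{ayoub-etale}, but that theorem is stated for $\bDA_{\et}$ with $\Q$-algebra coefficients, which is not the setting here. (This is also why Theorem \ref{thm:comparison_ayoub}, whose proof you say you want to imitate, carries a $\Q$-algebra hypothesis.) With $\Z/\ell^N$-coefficients the correct input is algebraic rigidity over a field, which the appendix does explicitly grant itself, so your step can be repaired, but as written it points to the wrong result. Moreover, knowing that $F^*$ acts by an isomorphism on a class of compact generators is not by itself the same as $F^*$ being an equivalence of categories; one has to say a word about why this suffices (e.g., by exhibiting a natural transformation to the identity coming from the relative Frobenius, as in the proof of Theorem \ref{thm:comparison_ayoub}, or by arguing via adjoints). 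The paper's direct appeal to $\Cor(K,\La)\simeq\Cor(L,\La)$ sidesteps all of this.
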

\begin{proof}
	The fact that the horizontal functors are fully faithful can be shown by means of Proposition \ref{prop:iso_muDM} (see the proof of Theorem \ref{thm:embDM}). 
By Proposition 2.2.22 of \cite{ayoub-rig} there is an equivalence of categories 
\[ \Cor(K, \La) \stackrel{\sim}{\to} \Cor(L, \La). \]
This equivalence induces the  equivalence $\bRigDM_{\et}(K, \Lambda) \to \bRigDM_{\et}(L, \Lambda)$ of the claim, which obviously preserves Artin motives.
\end{proof}

\begin{lemma} \label{lemma:fin_ext}
	Let $S$ be a smooth rigid variety over $K$, $L/K$ a finite separable extension and $S_L$ the base change of $S$ to $L$. Then $\La^\Tr_L(S_L)$ is an Artin motive if and only if $\La^\Tr_K(S) $ is an Artin motive.
\end{lemma}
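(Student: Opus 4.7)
The plan is to exploit the finite étale morphism $f\colon \Spa L \to \Spa K$ induced by the separable extension $L/K$, together with the adjoint pair $(f_\sharp, f^*)$ on both $\bD((-)_{\et},\La)$ and $\bRigDM_{\et}(-, \La)$. The key preliminary observation is that both $f^*$ and $f_\sharp$ commute with the embedding $\iota^*$, and consequently preserve the subcategory of Artin motives. This commutation can be checked directly on representables: for $V \in L_{\et}$, $f_\sharp V = V$ viewed as an étale $K$-variety (via composition with $f$), and for $X \in \Sm_K$, $f^* X = X \times_K L$; these identifications are compatible with $\iota^*$, and the general case follows because the functors involved are left adjoints and hence commute with colimits.

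For the ``if'' direction, if $\La^\Tr_K(S)$ is Artin then the base change identity $f^* \La^\Tr_K(S) \cong \La^\Tr_L(S_L)$ combined with the preservation of Artin motives under $f^*$ gives the claim immediately.

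For the ``only if'' direction I would proceed in two stages. First, by passing to the Galois closure $L'$ of $L/K$ with group $G$ and using that $(f')^*$ preserves Artin motives (for $f'\colon \Spa L' \to \Spa L$), we reduce to showing: if $\La^\Tr_{L'}(S_{L'})$ is Artin and $L'/K$ is Galois, then $\La^\Tr_K(S)$ is Artin. Setting $f''\colon \Spa L' \to \Spa K$, the motive $\La^\Tr_K(S_{L'}) \cong f''_\sharp \La^\Tr_{L'}(S_{L'})$ (with $S_{L'}$ viewed over $K$ via composition) is Artin because $f''_\sharp$ preserves Artin motives. Second, étale descent in $\bRigDM_{\et}$ applied to the finite étale Galois cover $S_{L'} \to S$ yields the equivalence
\[ \La^\Tr_K(S) \cong \hocolim_{[n] \in \Delta^{\mathrm{op}}} \La^\Tr_K\!\left(S_{L'}^{\times_S(n+1)}\right). \]
The Galois hypothesis gives $L' \otimes_K L' \cong \prod_{g \in G} L'$ and inductively $S_{L'}^{\times_S(n+1)} \cong \coprod_{G^n} S_{L'}$ is a finite disjoint union, so each \v{C}ech term is a finite direct sum of copies of the Artin motive $\La^\Tr_K(S_{L'})$. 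Since the Artin subcategory is closed under shifts, cones and small direct sums, it is in particular closed under small homotopy colimits, and therefore $\La^\Tr_K(S)$ is Artin.

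The main technical point requiring care is the commutation $f_\sharp \circ \iota^* \cong \iota^* \circ f_\sharp$ for $f$ étale, which follows by adjunction from the analogous identity $f^* \circ \iota^* \cong \iota^* \circ f^*$ (a direct computation on representables). The other ingredients---étale descent in $\bRigDM_{\et}$ and the explicit description of the \v{C}ech nerve of a Galois cover---are standard.
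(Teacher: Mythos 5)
Your proof is correct and follows essentially the same route as the paper: both use that $f^*$ and $f_\sharp$ preserve Artin motives to handle the ``if'' direction and to push $\La^\Tr_L(S_L)$ down to an Artin motive over $K$, and both then apply \'etale descent to the \v{C}ech hypercover of the Galois (or normal-closure) cover $S_{L'}\to S$, whose terms are disjoint unions of copies of $S_{L'}$. The only difference is cosmetic: you spell out the reduction to a Galois extension via the closure $L'$, which the paper compresses into the phrase ``we can assume that $L/K$ is a normal extension''.
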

\begin{proof}
	We can consider the adjoint pair $$e_\sharp\colon \bRigDM_{\et}(L,\Lambda)\rightleftarrows\bRigDM_{\et}(K,\Lambda)\colon e^*$$
	induced by the smooth map $\Spec L\ra\Spec K$  and observe that it restricts to Artin motives. In particular, if $\La^\Tr(S)$ is Artin then also $\La^\Tr(S_L)$ is. Vice-versa, we can assume that $L/K$ is a normal extension and we suppose that  $\La^\Tr(S_L)$ is Artin over $L$. Then, also $e_\#(\La^\Tr(S_L))=\Lambda_K^{\tr}(S_L)$ is Artin. We can then consider the \u{C}ech hypercover $\cU^\bullet$ of $S$ induced by $S_L\ra S$ and remark that at each stage it is isomorphic to a disjoint union of copies of $S_L$. In particular the simplicial motive $\Lambda^{\tr}(\cU^\bullet)$ is lewel-wise Artin, and hence also $\Lambda(S)\cong\hocolim \Lambda^{\tr}(\cU^\bullet)$ is, since Artin motives are closed under sums and cones.
\end{proof}


%

\end{document}